
\documentclass[a4paper]{amsart}

\usepackage[english]{babel}
\usepackage{a4,exscale,amsfonts,amssymb,amsmath,amscd,graphicx,color}
\usepackage{tikz,caption}

\setcounter{footnote}{1}
\setcounter{tocdepth}{3}


\title[The Stationary Stokes Problem in Exterior Domains]
{\sf
The Stationary Stokes Problem in Exterior Domains:\\
Estimates of the Distance to Solenoidal Fields\\
and Functional A Posteriori Error Estimates}
\author{Dirk Pauly}
\address{Fakult\"at f\"ur Mathematik,
Universit\"at Duisburg-Essen, Campus Essen, Germany\newline
\hspace*{4.2mm}\& Faculty of Information Technology, University of Jyv\"askyl\"a, Finland}
\email[Dirk Pauly]{dirk.pauly@uni-due.de}
\author{Sergey Repin}
\address{V.A. Steklov Institute of Mathematics at St. Petersburg, Russia\newline
\hspace*{4.2mm}\& Faculty of Information Technology, University of Jyv\"askyl\"a, Finland}
\email[Sergey Repin]{repin@pdmi.ras.ru}
\date{\today}


\allowdisplaybreaks
\setlength{\textwidth}{14cm}
\setlength{\textheight}{23cm}
\setlength{\oddsidemargin}{+0.9cm}
\setlength{\evensidemargin}{+0.9cm}

\newtheorem{lem}{Lemma}[section]
\newtheorem{theo}[lem]{Theorem}
\newtheorem{cor}[lem]{Corollary}
\newtheorem{rem}[lem]{Remark}

\DeclareMathOperator{\supp}{supp}
\DeclareMathOperator{\dist}{dist}
\newcommand{\reals}{\mathbb{R}}
\newcommand{\I}{\mathbb{I}}
\newcommand{\om}{\Omega}
\newcommand{\ga}{\Gamma}
\newcommand{\calD}{\mathcal{D}}
\newcommand{\calB}{\mathcal{B}}

\newcommand{\sfl}{\mathsf{L}}
\newcommand{\sfc}{\mathsf{C}}
\newcommand{\sfh}{\mathsf{H}}

\newcommand{\sfd}{\mathsf{D}}
\newcommand{\sfs}{\mathsf{S}}
\newcommand{\sfw}{\mathsf{W}}

\newcommand{\lt}{\sfl^{2}}
\newcommand{\li}{\sfl^{\infty}}
\newcommand{\ci}{\sfc^{\infty}}
\newcommand{\ho}{\sfh^{1}}
\newcommand{\hoh}{\sfh^{1/2}}
\newcommand{\woi}{\sfw^{1,\infty}}

\renewcommand{\d}{\sfd}
\newcommand{\s}{\sfs}

\DeclareMathOperator{\esssup}{ess\,sup}
\DeclareMathOperator{\na}{\nabla}
\DeclareMathOperator{\rot}{rot}

\DeclareMathOperator{\divergence}{div}
\renewcommand{\div}{\divergence}
\DeclareMathOperator{\Div}{Div}

\newcommand{\norm}[1]{\|#1\|}
\newcommand{\bnorm}[1]{\big\|#1\big\|}

\newcommand{\scp}[2]{\langle#1,#2\rangle}
\newcommand{\bscp}[2]{\big\langle#1,#2\big\rangle}

\newcommand{\setb}[2]{\big\{#1\,:\,#2\big\}}

\begin{document}

\begin{abstract}

%

This paper is concerned with the analysis of the inf-sup condition
arising in the stationary Stokes problem in exterior domains. 
We deduce values of the
constant in the stability lemma, which yields fully
computable estimates of the distance to the set
of divergence free fields defined in exterior domains.
Using these estimates we obtain computable majorants
of the difference between the exact solution of the Stokes
problem in exterior domains and any approximation from the
admissible (energy) class of functions satisfying 
the Dirichlet boundary condition exactly.
\end{abstract}

\maketitle
\begin{center}
\sf{Dedicated to the 110th anniversary of Solomon Grigor'evich Mikhlin\\
(April 23, 1908 -- August 29, 1990)}
\end{center}
\tableofcontents


\section{Introduction}
\label{secintro}

Let $\omega\subset\reals^{d}$, $d\geq2$, be a bounded domain with Lipschitz boundary $\gamma$,
which is composed of two open and disjoint parts $\gamma_{D},\gamma_{N}\subset\gamma$
(Dirichlet and Neumann part) with $\overline{\gamma}=\overline{\gamma}_{D}\cup\overline{\gamma}_{N}$.
Let the usual Lebesgue and Sobolev spaces (scalar, vector, or tensor valued)
be introduced by $\lt(\omega)$ and $\ho(\omega)$, respectively.
The standard inner product, norm, resp. orthogonality in $\lt(\omega)$ will be denoted by
$\scp{\,\cdot\,}{\,\cdot\,}_{0,\omega}$, $\norm{\,\cdot\,}_{0,\omega}$, resp. $\bot_{0,\omega}$.
For $\gamma_{D}\neq\emptyset$ let $\ho_{\gamma_{D}}(\omega)$ denote the subspace of $\ho(\omega)$ with vanishing
full traces on $\gamma_{D}$.
Moreover, we define spaces with vanishing mean value by\footnote{Throughout the paper
we do not express the respective measure in the notation of integrals, so that, e.g.,
with often used notations
$$\int_{\omega}f=\int_{\omega}fd\lambda=\int_{\omega}fdx,\qquad
\int_{\gamma}f=\int_{\gamma}fdo=\int_{\gamma}fds.$$}
\begin{align*}
\lt_{\bot}(\omega)
&:=\lt(\omega)\cap\reals^{\bot_{0,\omega}}
=\setb{\varphi\in\lt(\omega)}{\int_{\omega}\varphi=0},\\
\ho_{\bot}(\omega)
&:=\ho(\omega)\cap\lt_{\bot}(\omega)
=\setb{\varphi\in\ho(\omega)}{\int_{\omega}\varphi=0}
\end{align*}
and introduce 
\begin{align*}
\lt_{\gamma_{D}}(\omega)
&:=\begin{cases}\lt(\omega)&\text{, if }\gamma_{D}\neq\gamma,\\
\lt_{\bot}(\omega)&\text{, if }\gamma_{D}=\gamma,\end{cases}
&
\ho_{\gamma_{D}}(\omega)
&:=\begin{cases}\ho_{\gamma_{D}}(\omega)&\text{, if }\gamma_{D}\neq\emptyset,\\
\ho_{\bot}(\omega)&\text{, if }\gamma_{D}=\emptyset.\end{cases}
\end{align*}
Furthermore, let us define solenoidal (divergence free) subspaces of $\ho(\om)$ by
$$\s(\omega)
:=\setb{\phi\in\ho(\omega)}{\div\phi=0},\qquad
\s_{\gamma_{D}}(\omega)
:=\ho_{\gamma_{D}}(\omega)\cap\s(\omega).$$
For further notation we refer to Section \ref{secprelim}.
From results of Babuska and Aziz, Ladyzhenskaya and Solonnikov, Brezzi, Necas \cite{BaAz,Br,La,LaSo,Necas},
for mixed boundary conditions see, e.g., the recent results in 
\cite{bauerneffpaulystarkenewpoincare,bauerneffpaulystarkedevDivsymCurl},
we have the following very important lemma 
in the theory of fluid dynamics and other fields of partial differential equations:

\begin{lem}[stability lemma]
\label{stablembddom}
There exists $c>0$ such that for any $g\in\lt_{\gamma_{D}}(\omega)$ 
there is a vector field $u\in\ho_{\gamma_{D}}(\omega)$ with $\div u=g$ and
$\norm{\na u}_{0,\omega}\leq c\norm{g}_{0,\omega}$.
The best constant $c$ will be denoted by $\kappa(\omega,\gamma_{D})$.
\end{lem}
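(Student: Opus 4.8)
The plan is to reformulate the assertion as a Babuska--Aziz/LBB inf-sup condition for the divergence and to establish the latter via the dual Necas inequality combined with a compactness argument. Since either $\gamma_{D}\neq\emptyset$ or (when $\gamma_{D}=\emptyset$) a vanishing mean value is enforced, the Friedrichs/Poincar\'e inequality makes $\norm{\na\,\cdot\,}_{0,\omega}$ a norm on $\ho_{\gamma_{D}}(\omega)$ equivalent to the full $\ho(\omega)$-norm. Thus $\div\colon\ho_{\gamma_{D}}(\omega)\to\lt_{\gamma_{D}}(\omega)$ is a bounded operator between Hilbert spaces carrying the norms $\norm{\na\,\cdot\,}_{0,\omega}$ and $\norm{\,\cdot\,}_{0,\omega}$, and the lemma asserts exactly that it is surjective with a bounded right inverse, the best constant $\kappa(\omega,\gamma_{D})$ being the norm of the minimal right inverse.

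First I would invoke the closed range theorem in Hilbert space: a bounded operator $B$ is surjective with a bounded right inverse if and only if its adjoint $B^{*}$ is bounded below, and the best right-inverse constant equals the reciprocal of the best lower bound for $B^{*}$. The adjoint of $\div$ is the functional $B^{*}q\colon v\mapsto\scp{\div v}{q}_{0,\omega}$ on $\ho_{\gamma_{D}}(\omega)$, so the claim reduces to the inf-sup estimate
\begin{equation*}
\norm{q}_{0,\omega}\leq c\sup_{0\neq v\in\ho_{\gamma_{D}}(\omega)}\frac{\scp{\div v}{q}_{0,\omega}}{\norm{\na v}_{0,\omega}}
\qquad\text{for all }q\in\lt_{\gamma_{D}}(\omega).
\end{equation*}
The right-hand supremum is the dual norm of the distributional gradient $\na q$ over the test space $\ho_{\gamma_{D}}(\omega)$, so this is the homogeneous Necas inequality. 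The case distinction built into the definitions is exactly what makes the kernel of $B^{*}$ compatible with the range: integrating by parts, $B^{*}q(v)=\int_{\gamma_{N}}q\,(v\cdot n)$ for constant $q$, which vanishes for all admissible $v$ precisely when $\gamma_{N}=\emptyset$, i.e. $\gamma_{D}=\gamma$; this is the only case where constants must be excluded, matching $\lt_{\gamma_{D}}(\omega)=\lt_{\bot}(\omega)$.

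The analytic core is the Necas inequality on the bounded Lipschitz domain $\omega$,
\begin{equation*}
\norm{q}_{0,\omega}\leq c\big(\norm{q}_{-1,\omega}+\norm{\na q}_{-1,\omega}\big)
\qquad\text{for all }q\in\lt(\omega),
\end{equation*}
where $\norm{\,\cdot\,}_{-1,\omega}$ is the norm of the dual of $\ho_{\gamma_{D}}(\omega)$. I would prove it by localization against a finite atlas of the Lipschitz boundary with a subordinate partition of unity, reducing to the model case of a Lipschitz subgraph, where a translation/difference-quotient (or reflection) argument bounds $\norm{q}_{0,\omega}$ by the negative norm of $\na q$. Granting this inequality, I would then apply the Peetre--Tartar compactness lemma with $A\colon q\mapsto B^{*}q$ and the compact embedding $T\colon\lt(\omega)\hookrightarrow\ho_{\gamma_{D}}(\omega)'$ (compact by Rellich, being the adjoint of the compact embedding $\ho_{\gamma_{D}}(\omega)\hookrightarrow\lt(\omega)$). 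Since the finite-dimensional kernel of $A$ is the constants when $\gamma_{D}=\gamma$ and trivial otherwise, absorbing the lower-order term $\norm{q}_{-1,\omega}$ on the complement of that kernel yields the homogeneous estimate $\norm{q}_{0,\omega}\leq c\norm{\na q}_{-1,\omega}$ on $\lt_{\gamma_{D}}(\omega)$, which is the sought inf-sup condition.

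I expect the main obstacle to be the Necas inequality itself, where the Lipschitz regularity of $\gamma$ enters essentially: the localization must be arranged so that the partition of unity does not spoil the argument, and in the mean value case each cut-off introduces a spurious mean value that has to be corrected by a globally defined field of prescribed divergence---handled by a secondary, already available solvability on star-shaped pieces (equivalently, by the explicit Bogovskii construction, which additionally furnishes quantitative bounds on $\kappa(\omega,\gamma_{D})$ relevant to the computable estimates pursued in this paper). The mixed boundary case requires extra care only in the compactness step, where the admissible test space and the kernel of $B^{*}$ change with $\gamma_{D}$; these variations are precisely the three cases encoded in the definitions of $\lt_{\gamma_{D}}(\omega)$ and $\ho_{\gamma_{D}}(\omega)$.
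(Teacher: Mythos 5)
The paper offers no proof of this lemma at all: it is quoted as a classical fact, with citations to Babu\v{s}ka--Aziz, Brezzi, Ladyzhenskaya--Solonnikov, and Ne\v{c}as (and to Bauer--Neff--Pauly--Starke for the mixed-boundary case). Your reconstruction --- closed-range duality reducing the claim to the inf-sup estimate for $\div$, the Ne\v{c}as negative-norm inequality proved by localization over the Lipschitz boundary, and a Peetre--Tartar compactness step that absorbs the lower-order term and identifies the kernel (constants exactly when $\gamma_{D}=\gamma$, matching the definitions of $\lt_{\gamma_{D}}(\omega)$ and $\ho_{\gamma_{D}}(\omega)$) --- is precisely the standard argument of those cited references, and it is correct; the only part you leave as a sketch, reasonably, is the Ne\v{c}as inequality itself, which you rightly identify as the analytic core.
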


\begin{rem}
\label{stablembddomrem}
Let us note the following:
\begin{itemize}
\item[\bf(i)]
In the theory of electrodynamics
$u$ is called a regular potential as it admits for Maxwell's equations
an unphysically high regularity and a very unphysical boundary condition,
much stronger than the usual normal boundary condition related to the divergence operator.
\item[\bf(ii)]
For $u\in\ho_{\gamma_{D}}(\omega)$
we have the Friedrichs/Poincar\'e inequality 
$\norm{u}_{0,\omega}\leq c\norm{\na u}_{0,\omega}$.
The best constant $c$ is the Friedrichs/Poincar\'e constant
and will be denoted by $c_{fp}(\omega,\gamma_{D})$. 
Hence we conclude for $u$ from Lemma \ref{stablembddom}
$$\frac{1}{c_{fp}(\omega,\gamma_{D})}\norm{u}_{0,\omega}
\leq\norm{\na u}_{0,\omega}
\leq\kappa(\omega,\gamma_{D})\norm{\div u}_{0,\omega}.$$
\item[\bf(iii)]
Note that $\kappa(\omega,\gamma_{D})$ is the norm of the right inverse $g\mapsto u$.
\end{itemize}
\end{rem}

Lemma \ref{stablembddom}  is a keystone fact in the theory of incompressible fluids.
It generates several important corollaries. First of all, Lemma \ref{stablembddom} 
guarantees the solvability of the stationary Stokes problem (in the velocity-pressure posing).
Indeed by solving $g=\div u$ Lemma \ref{stablembddom} yields immediately the following famous 
inf-sup of LBB condition:

\begin{cor}[inf-sup lemma]
\label{infsuplembddom}
It holds
$$\inf_{g\in\lt_{\gamma_{D}}(\omega)}\sup_{u\in\ho_{\gamma_{D}}(\omega)}
\frac{\scp{g}{\div u}_{0,\omega}}{\norm{g}_{0,\omega}\norm{\na u}_{0,\omega}}\geq\frac{1}{\kappa(\omega,\gamma_{D})}.$$
\end{cor}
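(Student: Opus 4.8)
The plan is to read off the inf-sup bound directly from the stability lemma, exactly as the surrounding text suggests. I would fix an arbitrary nonzero $g\in\lt_{\gamma_{D}}(\omega)$ and, invoking Lemma \ref{stablembddom}, select a vector field $u\in\ho_{\gamma_{D}}(\omega)$ that realizes the divergence constraint $\div u=g$ together with the norm control $\norm{\na u}_{0,\omega}\leq\kappa(\omega,\gamma_{D})\norm{g}_{0,\omega}$. Since such a $u$ exists for every admissible $g$, it is a legitimate test field in the supremum, and the idea is simply to insert it.

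For this particular $u$ the numerator collapses because $\div u=g$, so that $\scp{g}{\div u}_{0,\omega}=\scp{g}{g}_{0,\omega}=\norm{g}_{0,\omega}^{2}$. Note also that $\na u\neq0$, since otherwise $g=\div u=0$; hence the denominator is nonzero and the quotient is well defined. Substituting and cancelling one factor of $\norm{g}_{0,\omega}$ gives
\[
\frac{\scp{g}{\div u}_{0,\omega}}{\norm{g}_{0,\omega}\norm{\na u}_{0,\omega}}
=\frac{\norm{g}_{0,\omega}}{\norm{\na u}_{0,\omega}}
\geq\frac{\norm{g}_{0,\omega}}{\kappa(\omega,\gamma_{D})\norm{g}_{0,\omega}}
=\frac{1}{\kappa(\omega,\gamma_{D})},
\]
where the inequality is precisely the norm estimate from the stability lemma.

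Since the left-hand side is one admissible value of the quotient, the supremum over $u\in\ho_{\gamma_{D}}(\omega)$ is at least $1/\kappa(\omega,\gamma_{D})$. Crucially this lower bound does not depend on the chosen $g$, so it survives the passage to the infimum over all $g\in\lt_{\gamma_{D}}(\omega)$, which yields the claimed estimate. I expect no genuine analytic obstacle: the only points requiring a little care are the exclusion of the degenerate case $g=0$ (where the quotient is undefined) and the correct logical order, namely that a pointwise-in-$u$ lower bound becomes a lower bound for the supremum, and uniformity in $g$ then transfers it to the infimum — rather than any deeper difficulty.
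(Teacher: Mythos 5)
Your proposal is correct and follows exactly the route the paper takes: the paper introduces the corollary with the remark that ``by solving $g=\div u$ Lemma \ref{stablembddom} yields immediately'' the inf-sup condition, which is precisely your argument of inserting the field $u$ from the stability lemma into the quotient and using $\norm{\na u}_{0,\omega}\leq\kappa(\omega,\gamma_{D})\norm{g}_{0,\omega}$. Your extra remarks on excluding $g=0$ and on the order of quantifiers are sound bookkeeping that the paper leaves implicit.
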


A solution theory for the Stokes problem follows. 
The stationary Stokes problem reads as follows:
For given $\nu>0$, $G\in\lt(\omega)$, $u_{D}\in\s(\omega)$, $\sigma_{N}$
find a velocity field $u$ and a pressure function $p$ solving
the first order system
\begin{center}
\begin{minipage}{60mm}
\begin{align*}
-\Div\sigma
&=G
&
&\text{in }\omega,\\
\sigma
&=\nu\na u-p\,\I
&
&\text{in }\omega,\\
-\div u
&=0
&
&\text{in }\omega,\\
u
&=u_{D}
&
&\text{on }\gamma_{D},\\
\sigma n
&=\sigma_{N}
&
&\text{on }\gamma_{N}.
\end{align*}
\end{minipage}
\end{center}
Equivalently, by removing the additional stress tensor $\sigma$,
we have the second order formulation
\begin{center}
\begin{minipage}{60mm}
\begin{align*}
-\nu\Delta u+\na p
&=G
&
&\text{in }\omega,\\
-\div u
&=0
&
&\text{in }\omega,\\
u
&=u_{D}
&
&\text{on }\gamma_{D},\\
(\nu\na u-p)n
&=\sigma_{N}
&
&\text{on }\gamma_{N}.
\end{align*}
\end{minipage}
\end{center}
It is worth noting that the Dirichlet boundary term $u_{D}$ satisfies
\begin{align}
\label{meanbcbddom}
\int_{\gamma}n\cdot u_{D}
=\int_{\omega}\div u_{D}=0.
\end{align}
Hence, if the boundary datum is given by some $\tilde{u}_{D}\in\hoh(\gamma)$
any solenoidal extension $u_{D}$ to $\omega$ of $\tilde{u}_{D}$
must satisfy the normal mean value property
$$\int_{\gamma}n\cdot\tilde{u}_{D}=0.$$
On the other hand, one can always find a continuous and solenoidal lifting 
of a boundary term $\tilde{u}_{D}\in\hoh(\gamma)$ as long as \eqref{meanbcbddom} holds,
see also our more general Corollary \ref{inhomodistlembddom}.
In the smooth case we have for $\phi\in\ci_{\gamma_{D}}(\omega)$
\begin{align*}
&\qquad-\nu\scp{\Delta u}{\phi}_{0,\omega}
=\nu\scp{\na u}{\na\phi}_{0,\omega}
-\nu\bscp{(\na u)n}{\phi}_{0,\gamma_{N}}\\
&=\scp{G}{\phi}_{0,\omega}
-\scp{\na p}{\phi}_{0,\omega}
=\scp{G}{\phi}_{0,\omega}
+\scp{p}{\div\phi}_{0,\omega}
-\bscp{pn}{\phi}_{0,\gamma_{N}},
\end{align*}
i.e.,
$$\nu\scp{\na u}{\na\phi}_{0,\omega}
=\scp{G}{\phi}_{0,\omega}
+\scp{p}{\div\phi}_{0,\omega}
+\scp{\sigma_{N}}{\phi}_{0,\gamma_{N}}.$$
Let us for simplicity assume $\sigma_{N}=0$.
A possible variational formulation is given by the following:
Find $u\in u_{D}+\s_{\gamma_{D}}(\omega)$ such that for all $\phi\in\s_{\gamma_{D}}(\omega)$
$$\nu\scp{\na u}{\na\phi}_{0,\omega}
=\scp{G}{\phi}_{0,\omega}.$$
Using the ansatz $u=u_{D}+\hat{u}$ with $\hat{u}\in\s_{\gamma_{D}}(\omega)$ 
we reduce this formulation to find $\hat{u}\in\s_{\gamma_{D}}(\omega)$ 
such that for all $\phi\in\s_{\gamma_{D}}(\omega)$
$$\nu\scp{\na\hat{u}}{\na\phi}_{0,\omega}
=\scp{G}{\phi}_{0,\omega}
-\nu\scp{\na u_{D}}{\na\phi}_{0,\omega}.$$
Note that the pressure $p$ is not involved in this formulation.
Another formulation taking the pressure into account 
and removing the unpleasant solenoidal condition from the Hilbert space
is the following saddle point formulation: 
Find $(\hat{u},p)\in\ho_{\gamma_{D}}(\omega)\times\lt_{\gamma_{D}}(\omega)$
such that for all $(\phi,\varphi)\in\ho_{\gamma_{D}}(\omega)\times\lt_{\gamma_{D}}(\omega)$
\begin{align*}
\nu\scp{\na\hat{u}}{\na\phi}_{0,\omega}
-\scp{p}{\div\phi}_{0,\omega}
&=\scp{G}{\phi}_{0,\omega}
-\nu\scp{\na u_{D}}{\na\phi}_{0,\omega},\\
-\scp{\div\hat{u}}{\varphi}_{0,\omega}
&=0,
\end{align*}
which reads in formal matrix notation (boundary conditions are indicated as subscripts) as
$$\begin{bmatrix}
-\nu\div_{\gamma_{N}}\na_{\gamma_{D}} & \na_{\gamma_{N}} \\
-\div_{\gamma_{D}} & 0
\end{bmatrix}
\begin{bmatrix}
\hat{u} \\
p
\end{bmatrix}
=
\begin{bmatrix}
G+\nu\div_{\gamma_{N}}\na_{\gamma_{D}}u_{D} \\
0
\end{bmatrix}.$$

\begin{cor}[Stokes lemma]
\label{stokeslembddom}
For $\nu>0$, $G\in\lt_{\gamma_{N}}(\omega)$, 
$u_{D}\in\s(\omega)$ the Stokes system is uniquely solvable with 
$u=u_{D}+\hat{u}\in u_{D}+\s_{\gamma_{D}}(\omega)\subset\s(\omega)$
and $p\in\lt_{\gamma_{D}}(\omega)$. Moreover, 
\begin{align*}
\nu\norm{\na\hat{u}}_{0,\omega}
&\leq c_{fp}(\omega,\gamma_{D})\norm{G}_{0,\omega}
+\nu\norm{\na u_{D}}_{0,\omega},\\
\nu\norm{\na u}_{0,\omega}
&\leq c_{fp}(\omega,\gamma_{D})\norm{G}_{0,\omega}
+2\nu\norm{\na u_{D}}_{0,\omega},\\
\norm{p}_{0,\omega}
&\leq 2\kappa(\omega,\gamma_{D})\big(c_{fp}(\omega,\gamma_{D})\norm{G}_{0,\omega}
+\nu\norm{\na u_{D}}_{0,\omega}\big).
\end{align*}
\end{cor}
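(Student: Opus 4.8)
The plan is to decouple the problem as in the reduced formulation derived above: first solve the pressure-free problem for the correction $\hat{u}$ on the solenoidal space, and then recover the pressure from the inf-sup condition. On the Hilbert space $\s_{\gamma_{D}}(\omega)$ equipped with the inner product $\scp{\na\,\cdot\,}{\na\,\cdot\,}_{0,\omega}$ (whose induced norm is equivalent to the full $\ho$-norm by the Friedrichs/Poincar\'e inequality of Remark \ref{stablembddomrem}(ii)), the bilinear form $(\hat{u},\phi)\mapsto\nu\scp{\na\hat{u}}{\na\phi}_{0,\omega}$ is bounded and coercive with constant $\nu$, while $\phi\mapsto\scp{G}{\phi}_{0,\omega}-\nu\scp{\na u_{D}}{\na\phi}_{0,\omega}$ is a bounded linear functional. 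Hence the Lax--Milgram lemma yields a unique $\hat{u}\in\s_{\gamma_{D}}(\omega)$, and I set $u:=u_{D}+\hat{u}$, which lies in $u_{D}+\s_{\gamma_{D}}(\omega)\subset\s(\omega)$ since $u_{D}\in\s(\omega)$.

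For the velocity estimates I would test the reduced equation with $\phi=\hat{u}$, obtaining $\nu\norm{\na\hat{u}}_{0,\omega}^{2}=\scp{G}{\hat{u}}_{0,\omega}-\nu\scp{\na u_{D}}{\na\hat{u}}_{0,\omega}$. The Cauchy--Schwarz inequality together with $\norm{\hat{u}}_{0,\omega}\leq c_{fp}(\omega,\gamma_{D})\norm{\na\hat{u}}_{0,\omega}$ bounds the right-hand side by $\big(c_{fp}(\omega,\gamma_{D})\norm{G}_{0,\omega}+\nu\norm{\na u_{D}}_{0,\omega}\big)\norm{\na\hat{u}}_{0,\omega}$; dividing by $\norm{\na\hat{u}}_{0,\omega}$ gives the first estimate. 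The second then follows at once from the triangle inequality $\nu\norm{\na u}_{0,\omega}\leq\nu\norm{\na u_{D}}_{0,\omega}+\nu\norm{\na\hat{u}}_{0,\omega}$ after substituting the first bound.

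The main obstacle is the existence of the pressure and its estimate, and here the inf-sup condition of Corollary \ref{infsuplembddom} is exactly the required tool. By construction of $\hat{u}$, the functional $\phi\mapsto\nu\scp{\na\hat{u}}{\na\phi}_{0,\omega}-\scp{G}{\phi}_{0,\omega}+\nu\scp{\na u_{D}}{\na\phi}_{0,\omega}$ on $\ho_{\gamma_{D}}(\omega)$ vanishes on $\s_{\gamma_{D}}(\omega)$, the kernel of $\div$ in $\ho_{\gamma_{D}}(\omega)$. Since Lemma \ref{stablembddom} states that $\div\colon\ho_{\gamma_{D}}(\omega)\to\lt_{\gamma_{D}}(\omega)$ is surjective with bounded right inverse, the closed range theorem identifies the annihilator of this kernel with the range of the adjoint, so there is a unique $p\in\lt_{\gamma_{D}}(\omega)$ representing the functional as $\scp{p}{\div\phi}_{0,\omega}$; this is precisely the first saddle-point equation, and its uniqueness is the injectivity furnished by the inf-sup inequality. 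For the pressure bound I would invoke the inf-sup inequality in the form $\kappa(\omega,\gamma_{D})^{-1}\norm{p}_{0,\omega}\leq\sup_{\phi\in\ho_{\gamma_{D}}(\omega)}\scp{p}{\div\phi}_{0,\omega}/\norm{\na\phi}_{0,\omega}$, replace $\scp{p}{\div\phi}_{0,\omega}$ by the three terms of the functional, estimate each by Cauchy--Schwarz and Friedrichs/Poincar\'e to extract the factor $\nu\norm{\na\hat{u}}_{0,\omega}+c_{fp}(\omega,\gamma_{D})\norm{G}_{0,\omega}+\nu\norm{\na u_{D}}_{0,\omega}$, and finally substitute the already-proven first estimate for $\nu\norm{\na\hat{u}}_{0,\omega}$ to reach the stated factor of $2\kappa(\omega,\gamma_{D})$.
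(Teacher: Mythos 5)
Your proposal is correct and follows essentially the same route as the paper: the paper's proof merely invokes ``standard saddle point theory and the inf-sup lemma'' plus the device of solving $p=\div\phi$ via Lemma \ref{stablembddom} for the pressure bound, and your argument (Lax--Milgram on $\s_{\gamma_{D}}(\omega)$, pressure recovery through the closed range theorem, inf-sup estimate with substitution of the velocity bound) is exactly that standard argument written out in detail. In particular, your use of the inf-sup inequality of Corollary \ref{infsuplembddom} for the pressure estimate is equivalent to the paper's solving of $\div\phi_{p}=p$ by Lemma \ref{stablembddom}, since the former is deduced from the latter.
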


Note that here in the vector valued case
\begin{align*}
\lt_{\gamma_{N}}(\omega)
&=\begin{cases}\lt(\omega)&\text{, if }\gamma_{D}\neq\emptyset,\\
\lt_{\bot}(\omega)&\text{, if }\gamma_{D}=\emptyset,\end{cases}\\
\lt_{\bot}(\omega)
&=\lt(\omega)\cap(\reals^{d})^{\bot_{0,\omega}}
=\setb{\phi\in\lt(\omega)}{\int_{\omega}\phi_{i}=0}.
\end{align*}

\begin{proof}
Standard saddle point theory and the inf-sup lemma, Corollary \ref{infsuplembddom},
shows existence and the estimates follow by standard arguments,
which provide also uniqueness. Note that we solve $p=\div\phi$ by Lemma \ref{stablembddom}
to get the estimates for the pressure $p$.
\end{proof}

Another direct consequence of Lemma \ref{stablembddom} is an estimate for the distance of vector fields
to solenoidal fields, more precisely:

\begin{cor}[distance lemma]
\label{distlembddom}
For any $u\in\ho_{\gamma_{D}}(\omega)$ there exists 
a solenoidal $u_{0}\in\s_{\gamma_{D}}(\omega)$ such that
$$\dist\big(u,\s_{\gamma_{D}}(\omega)\big)
=\inf_{\phi\in\s_{\gamma_{D}}(\omega)}\bnorm{\na(u-\phi)}_{0,\omega}
\leq\bnorm{\na(u-u_{0})}_{0,\omega}
\leq\kappa(\omega,\gamma_{D})\norm{\div u}_{0,\omega}.$$
\end{cor}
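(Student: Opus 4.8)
The plan is to reduce the distance estimate directly to the stability lemma by constructing, from a given $u\in\ho_{\gamma_{D}}(\omega)$, an explicit solenoidal approximant $u_{0}$ whose gradient distance to $u$ is controlled by $\norm{\div u}_{0,\omega}$. The natural candidate is $u_{0}:=u-w$, where $w$ is the corrector field produced by Lemma \ref{stablembddom} applied to the datum $g:=\div u$. The first step is therefore to verify that $g$ is an admissible datum, i.e.\ that $\div u\in\lt_{\gamma_{D}}(\omega)$. For $u\in\ho_{\gamma_{D}}(\omega)$ we clearly have $\div u\in\lt(\omega)$, so the only nontrivial case is $\gamma_{D}=\gamma$, where admissibility requires $\int_{\omega}\div u=0$; this follows from the Gauss divergence theorem, since $\int_{\omega}\div u=\int_{\gamma}n\cdot u=0$ because $u$ has vanishing full trace on $\gamma_{D}=\gamma$.

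Once admissibility is established, I would invoke Lemma \ref{stablembddom} to obtain a vector field $w\in\ho_{\gamma_{D}}(\omega)$ with $\div w=g=\div u$ and the norm bound $\norm{\na w}_{0,\omega}\leq\kappa(\omega,\gamma_{D})\norm{\div u}_{0,\omega}$. Setting $u_{0}:=u-w$, I would then check that $u_{0}$ lies in the correct space: it is in $\ho_{\gamma_{D}}(\omega)$ because both $u$ and $w$ are, and it is solenoidal because $\div u_{0}=\div u-\div w=0$, so indeed $u_{0}\in\s_{\gamma_{D}}(\omega)$.

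The conclusion then follows immediately by the definition of the distance as an infimum over $\s_{\gamma_{D}}(\omega)$: since $u_{0}$ is one admissible competitor, we get
$$\dist\big(u,\s_{\gamma_{D}}(\omega)\big)
\leq\bnorm{\na(u-u_{0})}_{0,\omega}
=\norm{\na w}_{0,\omega}
\leq\kappa(\omega,\gamma_{D})\norm{\div u}_{0,\omega},$$
which is exactly the asserted chain of inequalities.

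I do not expect any serious obstacle here, as the corollary is essentially a restatement of the stability lemma in geometric language. The only point demanding a little care is the admissibility check in the case $\gamma_{D}=\gamma$, where one must confirm that the compatibility condition $\int_{\omega}\div u=0$ defining $\lt_{\bot}(\omega)$ is automatically met; this is handled by the trace/divergence identity noted above and is the same mechanism underlying the mean-value condition \eqref{meanbcbddom}. Everything else is a direct application of Lemma \ref{stablembddom} together with the definition of the infimum.
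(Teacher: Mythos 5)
Your proposal is correct and follows exactly the paper's own argument: solve $\div\tilde{u}=\div u$ via Lemma \ref{stablembddom} (checking $\int_{\omega}\div u=\int_{\gamma}n\cdot u=0$ when $\gamma_{D}=\gamma$), set $u_{0}:=u-\tilde{u}$, and conclude from the definition of the infimum. Your corrector $w$ is the paper's $\tilde{u}$; there is no substantive difference.
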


\begin{proof}
For $u\in\ho_{\gamma_{D}}(\omega)$ solve 
$\div\tilde{u}=\div u\in\lt_{\gamma_{D}}(\omega)$ with $\tilde{u}\in\ho_{\gamma_{D}}(\omega)$
and the stability estimate $\norm{\na\tilde{u}}_{0,\omega}\leq\kappa(\omega,\gamma_{D})\norm{\div u}_{0,\omega}$
by Lemma \ref{stablembddom}. Note that for $\gamma_{D}=\gamma$ it holds
$$\int_{\omega}\div u
=\int_{\gamma}n\cdot u=0.$$
Then $u_{0}:=u-\tilde{u}\in\s_{\gamma_{D}}(\omega)$ and
$\bnorm{\na(u-u_{0})}_{0,\omega}
=\bnorm{\na\tilde{u}}_{0,\omega}
\leq\kappa(\omega,\gamma_{D})\norm{\div u}_{0,\omega}$.
\end{proof}

This result can be extended to vector fields satisfying non-homogeneous Dirichlet boundary conditions
provided that such a vector field $u$ satisfies 
$\div u\in\lt_{\gamma_{D}}(\om)$, the mean value condition, i.e.,
\begin{align}
\label{meancondbddom}
\int_{\omega}\div u=0\quad\text{, if }\gamma_{D}=\gamma.
\end{align}

\begin{cor}[inhomogeneous distance lemma]
\label{inhomodistlembddom}
For any $u\in\ho(\omega)$ with $\div u\in\lt_{\gamma_{D}}(\omega)$ there exists 
a solenoidal $u_{0}\in\s(\omega)$ such that
$u_{0}-u\in\ho_{\gamma_{D}}(\omega)$, i.e., $u_{0}|_{\gamma_{D}}=u|_{\gamma_{D}}$, and
$$\bnorm{\na(u_{0}-u)}_{0,\omega}
\leq\kappa(\omega,\gamma_{D})\norm{\div u}_{0,\omega}.$$
\end{cor}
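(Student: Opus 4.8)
The plan is to reduce the inhomogeneous statement of Corollary \ref{inhomodistlembddom} to the already-established homogeneous distance lemma, Corollary \ref{distlembddom}, via a standard lifting/reduction argument. The essential difficulty is that $u$ no longer lies in $\ho_{\gamma_{D}}(\omega)$, so Lemma \ref{stablembddom} cannot be applied to $u$ directly; its hypothesis requires a field with vanishing trace on $\gamma_{D}$. The key observation is that we do not need to modify the trace of $u$ on $\gamma_{D}$ at all --- we only need to correct its divergence by a field that \emph{does} vanish on $\gamma_{D}$, so that the resulting solenoidal field $u_{0}$ retains the same Dirichlet data as $u$.

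Concretely, first I would set $g:=\div u$ and invoke the hypothesis $\div u\in\lt_{\gamma_{D}}(\omega)$. When $\gamma_{D}=\gamma$, the definition of $\lt_{\gamma_{D}}(\omega)=\lt_{\bot}(\omega)$ encodes precisely the compatibility condition $\int_{\omega}\div u=0$ from \eqref{meancondbddom}; when $\gamma_{D}\neq\gamma$ there is no mean-value constraint and $\lt_{\gamma_{D}}(\omega)=\lt(\omega)$, so in either case $g$ is an admissible right-hand side for the stability lemma. Thus I would apply Lemma \ref{stablembddom} to $g$ to obtain a vector field $\tilde{u}\in\ho_{\gamma_{D}}(\omega)$ with $\div\tilde{u}=g=\div u$ and the stability bound $\norm{\na\tilde{u}}_{0,\omega}\leq\kappa(\omega,\gamma_{D})\norm{\div u}_{0,\omega}$.

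Next I would define $u_{0}:=u-\tilde{u}$. Since $\div u_{0}=\div u-\div\tilde{u}=0$, we have $u_{0}\in\s(\omega)$. Moreover $u_{0}-u=-\tilde{u}\in\ho_{\gamma_{D}}(\omega)$, which means $u_{0}$ and $u$ share the same full trace on $\gamma_{D}$, i.e.\ $u_{0}|_{\gamma_{D}}=u|_{\gamma_{D}}$, as required. Finally the desired estimate is immediate:
$$\bnorm{\na(u_{0}-u)}_{0,\omega}
=\bnorm{\na\tilde{u}}_{0,\omega}
\leq\kappa(\omega,\gamma_{D})\norm{\div u}_{0,\omega}.$$

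I expect the only genuinely delicate point to be the careful bookkeeping of the mean-value compatibility in the case $\gamma_{D}=\gamma$, where one must verify that $\div u$ indeed satisfies $\int_{\omega}\div u=0$ so that $g\in\lt_{\bot}(\omega)$ and Lemma \ref{stablembddom} applies; but this is exactly guaranteed by the hypothesis $\div u\in\lt_{\gamma_{D}}(\omega)$ together with \eqref{meancondbddom}. Everything else is a routine repetition of the proof of Corollary \ref{distlembddom}, the only structural change being that we now keep the (possibly nonzero) Dirichlet trace of $u$ rather than starting from $u\in\ho_{\gamma_{D}}(\omega)$.
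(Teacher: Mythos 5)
Your proposal is correct and follows essentially the same route as the paper: both apply Lemma \ref{stablembddom} directly to $g=\div u$ (admissible since $\div u\in\lt_{\gamma_{D}}(\omega)$ encodes the mean-value condition when $\gamma_{D}=\gamma$), obtain $\tilde{u}\in\ho_{\gamma_{D}}(\omega)$ with $\div\tilde{u}=\div u$, and set $u_{0}:=u-\tilde{u}$, from which the trace equality and the estimate follow immediately. The only cosmetic difference is your opening framing as a reduction to Corollary \ref{distlembddom}; the argument you actually carry out uses the stability lemma directly, exactly as the paper does.
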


Similar estimates for vector fields defined in $\mathsf{W}^{1,q}(\om)$ spaces for $q\in(1,\infty)$ 
have been obtained in \cite{Re2014,Re2015}.
In the literature, results like Corollary \ref{inhomodistlembddom}
are often called lifting lemmas, since a boundary datum
$u|_{\gamma_{D}}$ is lifted to the domain $\omega$,
in this case with a solenoidal representative. Note that
$$\int_{\gamma}n\cdot u=\int_{\omega}\div u=0\quad\text{, if }\gamma_{D}=\gamma.$$

\begin{proof}
For $u\in\ho(\omega)$ solve by Lemma \ref{stablembddom}
$\div\tilde{u}=\div u\in\lt_{\gamma_{D}}(\omega)$ with $\tilde{u}\in\ho_{\gamma_{D}}(\omega)$
and $\norm{\na\tilde{u}}_{0,\omega}\leq\kappa(\omega,\gamma_{D})\norm{\div u}_{0,\omega}$. 
Note that \eqref{meancondbddom} holds for $\gamma_{D}=\gamma$.
Then $u_{0}:=u-\tilde{u}\in\s(\omega)$ with $u-u_{0}=\tilde{u}\in\ho_{\gamma_{D}}(\omega)$ and
$\bnorm{\na(u_{0}-u)}_{0,\omega}
=\bnorm{\na\tilde{u}}_{0,\omega}
\leq\kappa(\omega,\gamma_{D})\norm{\div u}_{0,\omega}$.
\end{proof}

Estimates of the distance to $\s_{\gamma_{D}}(\omega)$ have not only theoretical meaning. 
They are also important for the a posteriori analysis of numerical solutions 
which usually satisfy the divergence free condition only approximately. 
If the constant $\kappa(\omega,\gamma_{D})$ is known, then by using Corollary \ref{distlembddom} 
we can deduce guaranteed and fully computable error bounds for approximations of problems arising 
in the theory of viscous incompressible fluids. 
For problems in bounded Lipschitz domains the respective results 
are presented in \cite{ReAlgAnal2004,ReGruyter}.

In this contribution we extend Lemma \ref{stablembddom}
and its corollaries to the case of exterior domains $\om\subset\reals^{d}$
and investigate applications to estimate the distance of vector fields
to solenoidal fields. These estimates allows us to deduce
new functional a posteriori error estimates valid for a wide class of
approximate solutions to the stationary Stokes problem in exterior domains.

\section{Preliminaries}
\label{secprelim}

Let $\calD\subset\reals^{d}$, $d\geq2$, be a domain (an open and connected set) 
with Lipschitz boundary $\calB$, which is composed of two open and disjoint parts $\calB_{D},\calB_{N}\subset\calB$
with $\overline{\calB}=\overline{\calB}_{D}\cup\overline{\calB}_{N}$ (Dirichlet and Neumann part).
We note that $\calD$ can be bounded or unbounded, especially an exterior domain (a domain with compact complement).
We introduce the usual Lebesgue and Sobolev spaces of square integrable functions
or vector/tensor fields by $\lt(\calD)$ and $\ho(\calD)$, respectively.
The standard inner product, norm, resp. orthogonality in $\lt(\calD)$ are denoted by
$\scp{\,\cdot\,}{\,\cdot\,}_{0,\calD}$, $\norm{\,\cdot\,}_{0,\calD}$, resp. $\bot_{0,\calD}$.
Moreover, let
$$\lt_{\calB_{D}}(\calD):=
\begin{cases}
\lt(\calD)\cap\reals^{\bot_{0,\calD}}&\text{, if }\calB_{D}=\calB,\\
\lt(\calD)&\text{, else,}
\end{cases}$$
provided that $\calD$ is bounded.
If $\calB_{D}\not=\emptyset$, homogeneous
Dirichlet boundary conditions are encoded in 
$\ho_{\calB_{D}}(\calD)$,
defined as closure of 
$$\ci_{\calB_{D}}(\calD):=\setb{\phi|_{\calD}}{u\in\ci(\reals^{d}),\;\supp\phi\text{ compact, }\dist(\supp\phi,\calB_{D})>0}$$
in $\ho(\calD)$.
Moreover, we introduce the polynomially weighted spaces
\begin{align*}
\lt_{\pm1}(\calD)
&:=\setb{\phi\in\lt_{\mathsf{loc}}(\calD)}{\rho^{\pm1}\phi\in\lt(\calD)},\\
\ho_{-1}(\calD)
&:=\setb{\phi\in\lt_{-1}(\calD)}{\na\phi\in\lt(\calD)},
\end{align*}
where the weight function $\rho$ is defined by $\rho(r):=(1+r^2)^{1/2}$, $r(x):=|x|$.
Inner product, norm, resp. orthogonality in $\lt_{\pm1}(\calD)$ is defined and denoted by
$\scp{\,\cdot\,}{\,\cdot\,}_{\pm1,\calD}:=\bscp{\rho^{\pm2}\,\cdot\,}{\,\cdot\,}_{0,\calD}$, 
$\norm{\,\cdot\,}_{\pm1,\calD}$, resp. $\bot_{\pm1,\calD}$.
As before, if $\calB_{D}\not=\emptyset$,
homogeneous (full, tangential, resp. normal) Dirichlet boundary conditions 
are introduced in $\ho_{-1,\calB_{D}}(\calD)$,
the closure of $\ci_{\calB_{D}}(\calD)$ in $\ho_{-1}(\calD)$.
Finally, in particular for the Stokes equations, we introduce spaces of solenoidal fields
\begin{align*}
\s(\calD)
&:=\setb{\phi\in\ho(\calD)}{\div\phi=0},
&
\s_{\calB_{D}}(\calD)
&:=\ho_{\calB_{D}}(\calD)\cap\s(\calD),\\
\s_{-1}(\calD)
&:=\setb{\phi\in\ho_{-1}(\calD)}{\div\phi=0},
&
\s_{-1,\calB_{D}}(\calD)
&:=\ho_{-1,\calB_{D}}(\calD)\cap\s_{-1}(\calD).
\end{align*}
Note that in the case of a bounded domain, there is no difference between the unweighted and weighted spaces,
meaning that the spaces coincide as sets and possess different inner products.

Throughout the paper we assume that $\om\subset\reals^{d}$, 
where $d\geq3$ (the special case $d=2$ is considered 
in Section \ref{2Dsec} and in Appendix II),
is an exterior domain with a Lipschitz boundary $\ga$,
which is composed of two open and disjoint parts $\ga_{D},\ga_{N}\subset\ga$
(Dirichlet and Neumann part) with $\overline{\ga}=\overline{\ga}_{D}\cup\overline{\ga}_{N}$.
Moreover, let $\reals^{d}\setminus\om\subset B_{r_{1}}$ for some $r_{2}>r_{1}>0$ and 
$$\omega:=\om_{r_{2}}:=\om\cap B_{r_{2}},\quad 
\gamma=\ga\cup S_{r_{2}},\quad 
\gamma_{D}:=\ga_{D}\cup S_{r_{2}},$$
where $B_{r}$ and $S_{r}$ denote the open ball and the sphere of radius $r$
centered at the origin in $\reals^{d}$, respectively.
We also pick some cut-off Lipschitz continuous function $\xi\in\woi(\reals;[0,1])$
satisfying $\xi|_{(-\infty,0]}=0$ and $\xi|_{[1,\infty)}=1$ and set
$$\xi'_{\infty}:=\esssup_{[0,1]}|\xi'|.$$
Then the function $\tilde{\xi}$ defined by $\tilde{\xi}(z):=\xi\big((z-r_{1})/(r_{2}-r_{1})\big)$ 
belongs to $\woi(\reals;[0,1])$ as well and satisfies 
$\tilde{\xi}|_{(-\infty,r_{1}]}=0$ and $\tilde{\xi}|_{[r_{2},\infty)}=1$.
Thus 
\begin{align}
\label{defeta}
\eta:=\tilde{\xi}\circ r\in\woi(\reals^{d})
\end{align}
with $\eta|_{\overline{B}_{r_{1}}}=0$ and $\eta|_{\reals^{d}\setminus B_{r_{2}}}=1$.
Finally, we define the constant
$$c_{d}:=
\frac{2}{d-2}.$$

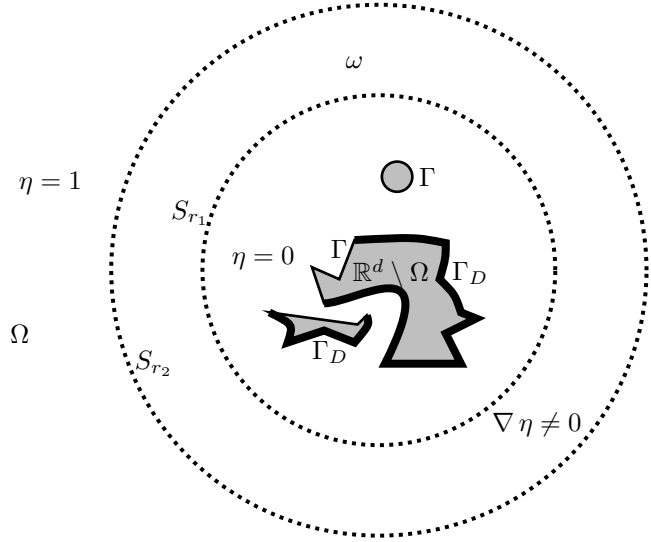
\begin{figure}
\def\scalepic{0.4}
\begin{center}
\begin{tikzpicture}[scale=\scalepic]
\fill [gray,opacity=.5] (0,0) .. controls (1,0) and (4,2) .. (2,-2) -- (4.5,-2) -- (4,-1) -- (5,-0.5) -- (4.5,-0.3) .. controls (4.3,0.3) ..(3.8,0.8) .. controls (4,1.5) .. (4,2) .. controls (3,2.2) .. (1,2.1) -- (0.5,0.8) -- (-0.4,1.2) -- cycle;
\draw [line width=1pt] (0,0) .. controls (1,0) and (4,2) .. (2,-2) -- (4.5,-2) -- (4,-1) -- (5,-0.5) -- (4.5,-0.3) .. controls (4.3,0.3) ..(3.8,0.8) .. controls (4,1.5) .. (4,2) .. controls (3,2.2) .. (1,2.1) -- (0.5,0.8) -- (-0.4,1.2) -- cycle;
\draw [line width=3pt] (0,0) .. controls (1,0) and (4,2) .. (2,-2) -- (4.5,-2) -- (4,-1) -- (5,-0.5) -- (4.5,-0.3) .. controls (4.3,0.3) ..(3.8,0.8) .. controls (4,1.5) .. (4,2) .. controls (3,2.2) .. (1,2.1);
\fill [lightgray] (-1.8,-0.3) .. controls (-1.0,-0.7) .. (-1.2,-1.3) -- (0,-0.9) -- (1.0,-1.3) .. controls (1.5,-0.7) .. (1.4,-0.4) -- (1.1,-0.7) -- cycle;
\draw [line width=1pt] (-1.8,-0.3) .. controls (-1.0,-0.7) .. (-1.2,-1.3) -- (0,-0.9) -- (1.0,-1.3) .. controls (1.5,-0.7) .. (1.4,-0.4) -- (1.1,-0.7) -- cycle;
\draw [line width=3pt] (-1.8,-0.3) .. controls (-1.0,-0.7) .. (-1.2,-1.3) -- (0,-0.9) -- (1.0,-1.3) .. controls (1.5,-0.7) .. (1.4,-0.4);
\fill [lightgray] (2.4,4.2) circle (0.5);
\draw [line width=1pt] (2.4,4.2) circle (0.5);
\draw [black,dotted,line width=1.5pt] (1.8,1.1) circle (8.8);
\draw [black,dotted,line width=1.5pt] (1.8,1.1) circle (5.8);
\node at (1,8) {$\omega$};
\node at (-10,-1) {$\om$};
\node at (-9,4) {$\eta=1$};
\node at (-2,1.5) {$\eta=0$};
\node at (7,-4) {$\na\eta\not=0$};
\node at (2.2,1) {$\reals^{d}\setminus\om$};
\node at (3.4,4.2) {$\ga$};
\node at (0.5,1.8) {$\ga$};
\node at (4.8,1) {$\ga_{D}$};
\node at (0.2,-1.6) {$\ga_{D}$};
\node at (-5.6,-2) {$S_{r_{2}}$};
\node at (-4.4,3) {$S_{r_{1}}$};
\end{tikzpicture}
\end{center}
\caption{$\reals^{d}\setminus\overline{\om}$ (gray)
surrounded by the boundary $\ga$ (thin black lines),
the boundary part $\ga_{D}$ (thick black lines),
and the artificial boundary spheres (dashed lines)}
\label{fig}
\end{figure}

The two main ingredients for our proofs are Lemma \ref{stablembddom}
and a few results from the theory of $\rot$-$\div$-systems in exterior domains, 
which can be summarised in the two subsequent lemmas as follows:

\begin{lem}[Friedrichs/Poincar\'e lemma for exterior domains]
\label{fplemextdom}
The following weighted Friedrichs/Poincar\'e estimates hold:
\begin{itemize}
\item[\bf(i)]
There exists $c>0$ such that for all $v\in\ho_{-1,\ga_{D}}(\om)$ it holds
$$\norm{v}_{-1,\om}\leq c\norm{\na v}_{0,\om}.$$
The best constant $c$ is the Friedrichs/Poincar\'e constant
and is denoted by $c_{fp}(\om,\ga_{D})$.
\item[\bf(ii)]
If $\ga_{D}=\ga$, then $c_{fp}(\om,\ga)$ is the Friedrichs constant $c_{f}(\om)$ and can be estimates by
$$c_{fp}(\om,\ga)=c_{f}(\om)\leq c_{d}.$$
Especially, for all $v\in\ho_{-1,\ga}(\om)$ it holds
$\norm{v}_{-1,\om}
\leq c_{d}\norm{\na v}_{0,\om}$.
\item[\bf(ii')]
If $\ga_{D}=\emptyset$, then $c_{fp}(\om,\emptyset)$ is the Poincar\'e constant $c_{p}(\om)$.
Particularly, for all $v\in\ho_{-1}(\om)$ it holds
$\norm{v}_{-1,\om}
\leq c_{p}(\om)\norm{\na v}_{0,\om}$.
\item[\bf(iii)]
If $\om=\reals^{d}$, then the Friedrichs and Poincar\'e constants 
coincide and, moreover,
$$c_{fp}(\reals^{d})=c_{f}(\reals^{d})=c_{p}(\reals^{d})\leq c_{d}.$$
Especially, for all $v\in\ho_{-1}(\reals^{d})$ it holds
$$\norm{v}_{-1,\reals^{d}}
\leq c_{d}\norm{\na v}_{0,\reals^{d}}.$$
\end{itemize}
\end{lem}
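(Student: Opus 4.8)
The plan is to handle the three sharper assertions in order, extracting the explicit constant $c_{d}$ in the special cases from the classical Hardy inequality and deducing the general existence statement by a localisation-and-contradiction argument. I would begin with the whole-space case (iii). For $d\geq3$ the classical Hardy inequality gives $\norm{r^{-1}v}_{0,\reals^{d}}\leq c_{d}\norm{\na v}_{0,\reals^{d}}$ for all $v\in\ci(\reals^{d})$ with the optimal constant $c_{d}=2/(d-2)$, and since $\rho^{-2}=(1+r^{2})^{-1}\leq r^{-2}$ pointwise this at once yields $\norm{v}_{-1,\reals^{d}}\leq c_{d}\norm{\na v}_{0,\reals^{d}}$, first for test fields and then, by the definition of $\ho_{-1}(\reals^{d})$ as a closure, for all $v\in\ho_{-1}(\reals^{d})$. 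As $\reals^{d}$ has no boundary the Friedrichs and Poincar\'e constants coincide and are bounded by $c_{d}$.

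For the full Dirichlet case (ii) I would reduce to (iii) by extension by zero. If $v\in\ci_{\ga}(\om)$ then $\dist(\supp v,\ga)>0$, and because $\reals^{d}\setminus\om\subset B_{r_{1}}$, hence $\ga\subset\overline{B}_{r_{1}}$, the trivial extension $\tilde v$ of $v$ lies in $\ci(\reals^{d})$ with $\norm{v}_{-1,\om}=\norm{\tilde v}_{-1,\reals^{d}}$ and $\norm{\na v}_{0,\om}=\norm{\na\tilde v}_{0,\reals^{d}}$. Applying (iii) and passing to the closure $\ho_{-1,\ga}(\om)$ then gives the sharp bound $c_{f}(\om)\leq c_{d}$.

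The substantial part is the existence statement (i), which also subsumes the pure Neumann case (ii'). Here extension by zero is unavailable, and at the critical weight $\rho^{-1}$ the embedding $\ho_{-1}(\om)\hookrightarrow\lt_{-1}(\om)$ is continuous (again by Hardy) but not compact, so a global compactness argument is not directly available. I would argue by contradiction: were no constant to exist, there would be $v_{n}\in\ho_{-1,\ga_{D}}(\om)$ with $\norm{v_{n}}_{-1,\om}=1$ and $\norm{\na v_{n}}_{0,\om}\to0$. This sequence is bounded in $\ho_{-1}(\om)$, so a subsequence converges weakly to some $v$; since $\na v_{n}\to0$ strongly, $\na v=0$, so $v$ is constant and $v\in\lt_{-1}(\om)$. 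This is where $d\geq3$ is decisive: $\int_{\om}\rho^{-2}=\infty$, so the only constant in $\lt_{-1}(\om)$ is $0$, whence $v=0$ irrespective of whether $\ga_{D}$ is empty.

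It remains to upgrade this weak limit to the norm convergence $\norm{v_{n}}_{-1,\om}\to0$, which contradicts $\norm{v_{n}}_{-1,\om}=1$, and here I would localise by the cut-off $\eta$ from \eqref{defeta}. On the bounded Lipschitz domain $\om_{r_{2}}$ the Rellich theorem gives $v_{n}\to0$ in $\lt(\om_{r_{2}})$, so $\norm{(1-\eta)v_{n}}_{-1,\om}\leq\norm{(1-\eta)v_{n}}_{0,\om_{r_{2}}}\to0$ and $\norm{v_{n}}_{0,B_{r_{2}}\setminus B_{r_{1}}}\to0$, while the far field $\eta v_{n}$ vanishes on $\overline{B}_{r_{1}}\supset\ga$, so its trivial extension lies in $\ho_{-1}(\reals^{d})$ and (iii) gives $\norm{\eta v_{n}}_{-1,\om}\leq c_{d}\norm{\na(\eta v_{n})}_{0,\reals^{d}}\leq c_{d}\big(\norm{\na v_{n}}_{0,\om}+\tfrac{\xi'_{\infty}}{r_{2}-r_{1}}\norm{v_{n}}_{0,B_{r_{2}}\setminus B_{r_{1}}}\big)\to0$. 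Adding the two pieces forces $\norm{v_{n}}_{-1,\om}\to0$. The main obstacle is exactly this pure Neumann/mixed situation, where no Dirichlet portion pins the kernel and the critical weight precludes global compactness; its resolution rests on the two facts special to $d\geq3$, namely the Hardy inequality with explicit constant $c_{d}$ controlling the far field and the exclusion of nonzero constants from $\lt_{-1}(\om)$, glued together through $\eta$ by a purely local Rellich compactness on $\om_{r_{2}}$.
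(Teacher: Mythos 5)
Your proof is correct and takes essentially the same route as the paper: Hardy's inequality supplies the constant $c_{d}$ for the far field, the cut-off $\eta$ splits a general field, Rellich's theorem on the bounded piece $\omega$ handles the near field, and the contradiction closes because nonzero constants do not belong to $\lt_{-1}(\om)$ when $d\geq3$. The differences are cosmetic --- you obtain (ii) by zero-extension to $\reals^{d}$ rather than citing the weighted estimate on $\om$ directly, and you identify the limit by weak compactness plus the splitting, where the paper packages the same splitting into the inequality $\norm{u}_{-1,\om}\leq c_{d}\norm{\na u}_{0,\om}+\tilde{c}_{d}\norm{u}_{0,\omega}$ and runs a Cauchy-sequence argument --- though note that $\ho_{-1}(\reals^{d})$ is not defined as a closure of test fields, so the density you invoke there is a true but separate standard fact (needing the usual cut-off and mollification argument), which the paper also uses tacitly.
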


Note that no boundary or mean value conditions are needed
in Lemma \ref{fplemextdom}, since the constants 
are not integrable\footnote{More precisely, 
it holds $(1+r^2)^{t/2}\in\lt_{-1}(\om)$,
if and only if $t-1<-d/2$. Putting $t=0$ shows the assertion.} 
in $\lt_{-1}(\om)$ for $d\geq3$. 
For $d=3$, Lemma \ref{fplemextdom} follows immediately
from \cite[Poincar\'e's estimate IV, p. 62]{leisbook} by approximation.
Nevertheless, we present a simple and self-contained proof.

\begin{proof}
From \cite[Appendix 4.2, Lemma 4.1, Corollary 4.2, Remark 4.3]{paulyrepinell},
see also \cite[Poincar\'e's estimate III, p. 57]{leisbook} and
\cite[Lemma 4.1]{saranenwitschexteriorell}, 
we have for all\footnote{Note that $r^{-1}\in\lt(B_{1})$ if and only if $d\geq3$.}
$u\in\ci_{\ga}(\om)$ 
$$\norm{u}_{-1,\om}
\leq\norm{r^{-1}u}_{0,\om}
\leq c_{d}\norm{\na u}_{0,\om}$$
and hence by density and continuity for all $u\in\ho_{-1,\ga}(\om)$
\begin{align}
\label{fcorextdom}
\norm{u}_{-1,\om}
\leq c_{d}\norm{\na u}_{0,\om}.
\end{align}
For all $u\in\ho_{-1}(\om)$ we see
$\eta u\in\ho_{-1,\ga}(\om)$ and 
$\norm{\eta u}_{-1,\om}
\leq c_{d}\norm{\na(\eta u)}_{0,\om}$
by \eqref{fcorextdom}. Hence
\begin{align}
\label{flemcptpertextdom}
\norm{u}_{-1,\om}
&\leq c_{d}\norm{\na u}_{0,\om}
+c_{d}\norm{u\na\eta}_{0,\om}+\norm{(1-\eta)u}_{-1,\om}
\leq c_{d}\norm{\na u}_{0,\om}
+\tilde{c}_{d}\norm{u}_{0,\omega},
\end{align}
where\footnote{For $r_{2}=r_{1}+1$ and $\xi'_{\infty}\leq1$ we have $\tilde{c}_{d}\leq c_{d}+1$.}
$\tilde{c}_{d}:=c_{d}\xi'_{\infty}/(r_{2}-r_{1})+1$.
Now we can prove (i), even the stronger result (ii').
If the estimate in (ii') is false, there is a sequence $(u_{n})\subset\ho_{-1}(\om)$
with $\norm{u_{n}}_{-1,\om}=1$ and $\norm{\na u_{n}}_{0,\om}<1/n$. 
Hence, $(u_{n})$ is bounded in $\ho(\omega)$ as well. By Rellich's selection theorem we can assume w.l.o.g.
that $(u_{n})$ already converges in $\lt(\omega)$. Thus, by \eqref{flemcptpertextdom} $(u_{n})$ is a Cauchy sequence 
in $\lt_{-1}(\om)$ and hence also in $\ho_{-1}(\om)$. Therefore, $(u_{n})$ converges in $\ho_{-1}(\om)$
to some $u\in\ho_{-1}(\om)$ with $\na u=0$. We conclude that $u$ is constant. 
But then $u\in\lt_{-1}(\om)$ must vanish, which implies a contradiction by
$1=\norm{u_{n}}_{-1,\om}\to0$.
\end{proof}

\begin{lem}[$\rot$-$\div$ lemma for the whole space]
\label{maxlemextdom}
For any $f\in\lt(\reals^{d})$ there exists a unique
$v\in\ho_{-1}(\reals^{d})$ such that $\rot v=0$, $\div v=f$, and
$$\frac{1}{c_{d}}\norm{v}_{-1,\reals^{d}}
\leq\norm{\na v}_{0,\reals^{d}}
=\norm{\div v}_{0,\reals^{d}}
=\norm{f}_{0,\reals^{d}}.$$
\end{lem}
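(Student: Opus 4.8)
The plan is to exploit that $\reals^{d}$ is simply connected, so that an irrotational field is a gradient. Writing $v=\na\phi$ turns the system $\rot v=0$, $\div v=f$ into the single Poisson equation $\Delta\phi=f$, and I would construct $\phi$ as the Newtonian potential of $f$. First I would treat $f\in\ci_{c}(\reals^{d})$: then $\phi(x)=\int\Phi(x-y)f(y)$ with the fundamental solution $\Phi(x)=c'_{d}|x|^{-(d-2)}$ is smooth and decays like $|x|^{-(d-2)}$, so that $v=\na\phi$ decays like $|x|^{-(d-1)}$. For $d\geq3$ this decay guarantees $v\in\lt(\reals^{d})$ and $\rho^{-1}v\in\lt(\reals^{d})$, hence $v\in\ho_{-1}(\reals^{d})$, while by construction $\rot v=0$ and $\div v=\Delta\phi=f$.

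The norm identity $\norm{\na v}_{0,\reals^{d}}=\norm{\div v}_{0,\reals^{d}}$ is the crucial algebraic point, and it holds for any sufficiently decaying field with $\rot v=0$. Using the componentwise relations $\partial_{j}v_{k}=\partial_{k}v_{j}$ and integrating by parts twice,
\begin{align*}
\norm{\na v}_{0,\reals^{d}}^{2}
=\sum_{j,k}\scp{\partial_{j}v_{k}}{\partial_{k}v_{j}}_{0,\reals^{d}}
=-\sum_{k}\scp{v_{k}}{\partial_{k}\div v}_{0,\reals^{d}}
=\norm{\div v}_{0,\reals^{d}}^{2},
\end{align*}
the boundary terms vanishing by the decay of $v$. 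Combining this with $\div v=f$ and with the whole-space Friedrichs/Poincar\'e estimate $\norm{v}_{-1,\reals^{d}}\leq c_{d}\norm{\na v}_{0,\reals^{d}}$ from Lemma \ref{fplemextdom}(iii) yields the full chain of (in)equalities for smooth data.

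To reach arbitrary $f\in\lt(\reals^{d})$ I would pass to the limit: choose $f_{n}\in\ci_{c}(\reals^{d})$ with $f_{n}\to f$ in $\lt(\reals^{d})$ and let $v_{n}$ be the corresponding fields. The identity gives $\norm{\na(v_{n}-v_{m})}_{0,\reals^{d}}=\norm{f_{n}-f_{m}}_{0,\reals^{d}}\to0$, and Lemma \ref{fplemextdom}(iii) then gives $\norm{v_{n}-v_{m}}_{-1,\reals^{d}}\to0$, so that $(v_{n})$ is Cauchy in $\ho_{-1}(\reals^{d})$. Its limit $v$ inherits $\rot v=0$ and $\div v=f$ in the distributional sense and satisfies all three estimates by continuity. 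For uniqueness, a difference $w\in\ho_{-1}(\reals^{d})$ of two solutions has $\rot w=0$ and $\div w=0$; the norm identity forces $\na w=0$, so $w$ is constant, and since nonzero constants fail to lie in $\lt_{-1}(\reals^{d})$ for $d\geq3$ we conclude $w=0$.

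The main obstacle I anticipate is not the algebra but the justification of the integration by parts and of the membership $v\in\ho_{-1}(\reals^{d})$, i.e.\ controlling $v$ at infinity. This is exactly why the compactly supported approximation is used: it supplies the decay needed for the boundary terms to vanish and for the Hardy-type estimate underlying Lemma \ref{fplemextdom}(iii) to apply, after which the weighted space $\ho_{-1}(\reals^{d})$ is the correct setting into which the limit falls, even though $v$ itself need not be square integrable for general $f$.
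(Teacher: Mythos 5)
Your proposal is correct in substance but follows a genuinely different route from the paper. The paper does not construct the solution at all: it records the identity $\norm{\na\Phi}_{0,\reals^{d}}^2=\norm{\rot\Phi}_{0,\reals^{d}}^2+\norm{\div\Phi}_{0,\reals^{d}}^2$ coming from $-\Delta=\rots\rot-\na\div$ (equation \eqref{narotdiv}), extends it to all of $\ho_{-1}(\reals^{d})$ by density and continuity, and then cites the weighted-space theory of static $\rot$-$\div$ systems (Picard, Kuhn--Pauly, Pauly) for existence and uniqueness, combining this with Lemma \ref{fplemextdom} (iii) for the lower bound. You instead reduce $\rot v=0$, $\div v=f$ to the Poisson equation via $v=\na\phi$, solve it with the Newtonian potential for $f\in\ci_{c}(\reals^{d})$ -- where the decay rates $|\phi|\lesssim r^{2-d}$, $|v|\lesssim r^{1-d}$, $|\na v|\lesssim r^{-d}$ legitimise both the membership $v\in\ho_{-1}(\reals^{d})$ and the integrations by parts -- and then reach general $f\in\lt(\reals^{d})$ through completeness of $\ho_{-1}(\reals^{d})$, using your identity together with Lemma \ref{fplemextdom} (iii) to see that the solution map is Cauchy-preserving. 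This is a self-contained, constructive alternative to the paper's citation-based argument: it costs the verification of potential-theoretic decay estimates, and it buys independence from the cited literature plus an explicit solution operator $f\mapsto\na(\Phi*f)$ extended by continuity.

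One step needs patching: uniqueness. You apply the identity $\norm{\na w}_{0,\reals^{d}}=\norm{\div w}_{0,\reals^{d}}$ to an arbitrary difference $w\in\ho_{-1}(\reals^{d})$ of two solutions, but you established that identity only for ``sufficiently decaying'' irrotational fields, and a generic element of $\ho_{-1}(\reals^{d})$ has no pointwise decay, so the boundary terms in your double integration by parts are not controlled. Moreover, the constrained (irrotational-only) form of the identity is the wrong object to extend by density, because smooth compactly supported approximants of $w$ will in general not remain irrotational. The clean fix is exactly the paper's device: prove the unconstrained identity $\norm{\na\Phi}^2_{0,\reals^{d}}=\norm{\rot\Phi}^2_{0,\reals^{d}}+\norm{\div\Phi}^2_{0,\reals^{d}}$ for compactly supported smooth $\Phi$ (your computation, keeping the $\rot$ term instead of setting it to zero), extend it to all of $\ho_{-1}(\reals^{d})$ by density -- both sides are continuous in the $\ho_{-1}$-norm -- and then specialize to $\rot w=\div w=0$ to obtain $\na w=0$. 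Alternatively, note that $\rot w=\div w=0$ makes $w$ harmonic by Weyl's lemma, so each $\partial_{j}w_{k}$ is a harmonic function lying in $\lt(\reals^{d})$ and hence vanishes identically. After either repair, your concluding observation that nonzero constants fail to belong to $\lt_{-1}(\reals^{d})$ for $d\geq3$ finishes the argument exactly as in the paper.
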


Note that the equation $-\Delta=\rot^{*}\rot-\na\div$ implies
\begin{align}
\label{narotdiv}
\norm{\na\Phi}_{0,\reals^{d}}^2
=\norm{\rot\Phi}_{0,\reals^{d}}^2
+\norm{\div\Phi}_{0,\reals^{d}}^2
\end{align}
for all $\Phi\in\ci(\reals^{d})$ having compact support
and extends to all $\Phi\in\ho_{-1}(\reals^{d})$ by density and continuity.
Hence the equality $\norm{\na v}_{0,\reals^{d}}=\norm{\div v}_{0,\reals^{d}}$ 
in Lemma \ref{maxlemextdom} follows immediately.
The results of Lemma \ref{maxlemextdom} are well known and can be found, e.g.,
in \cite{picardharmdiff,picardpotential,picardboundaryelectro} 
or in \cite{kuhnpaulyregmax,paulystatic}. 
In particular, Lemma \ref{maxlemextdom} follows from 
Lemma \ref{fplemextdom} (iii), \eqref{narotdiv}, and
\cite[Theorem A.7, Theorem 3.2 (ii)]{kuhnpaulyregmax}, 
see also \cite[Lemma 3.5, Lemma 3.6, Theorem 4.1]{paulystatic}. 

\section{The Stability Lemma for Exterior Domains}
\label{seclbbextdom}

First we define our upper bound related to the geometry presented in Figure \ref{fig}.
\begin{align}
\label{defkappaupperbound}
\hat{\kappa}(\om,\ga_{D})
&:=(1+\kappa)\big(1+c_{d}\frac{\xi'_{\infty}\rho(r_{2})}{r_{2}-r_{1}}\big),
&
\kappa
&:=\min\big\{\kappa(\omega,\gamma_{D}),\kappa(\omega,\gamma)\big\}.
\end{align}
Especially for $r_{2}=r_{1}+1$ and $\xi'_{\infty}\leq1$ we have
the simple upper bound
\begin{align*}
\hat{\kappa}(\om,\ga_{D})
&=(1+\kappa)\big(1+c_{d}\rho(r_{2})\big).
\end{align*}
The above constants contain the stability constants
$\kappa(\omega,\gamma_{D})$, $\kappa(\omega,\gamma)$
associated with the bounded domain $\omega$ 
and respective parts of its boundary $\gamma_{D}$ and $\gamma$.

\begin{rem}
\label{constrem}
$\kappa(\omega,\gamma_{D})$ and $\kappa(\omega,\ga_{D})$ 
depend on $r_{2}$, so that the best value of $r_{2}$
(which minimises the constant) is not known a priori
and has to be optimized by some algebraic procedure.
We emphasize that 
$$\kappa\leq\kappa(\omega,\gamma_{D}),\qquad
\kappa\leq\kappa(\omega,\gamma).$$
and that a bound in the simple situation from above is given by
$$\hat{\kappa}(\om,\ga_{D})
=(1+\kappa)\big(1+\frac{2\sqrt{2}}{d-2}r_{2}\big)$$
\end{rem}

Now we can proceed to prove the stability lemma for exterior domains.
First we observe a trivial case for compactly supported right hand sides:

\begin{rem}
\label{stablemextdomrem}
There exists $c>0$ such that
for all $f\in\lt(\om)$ with $\supp f\subset\overline{\omega}$
and in the case $\ga_{D}=\ga$, additionally, $\int_{\om}f=0$, 
there is a vector field $v\in\ho_{-1,\ga_D}(\om)$ with
$\div v=f$ and $\norm{\na v}_{0,\om}\leq c\norm{f}_{0,\om}$.
The best constant is denoted by $\kappa(\om,\ga_{D})$. 
Moreover, $v$ can be chosen with compact support in $\overline{\omega}$, 
in particular, $v\in\ho_{\gamma_D}(\omega)\subset\ho_{\ga_D}(\om)$.
In this case, $\kappa(\om,\ga_{D})=\kappa(\omega,\gamma_{D})$.
For a short proof, we set $g:=f|_{\omega}\in\lt_{\gamma_{D}}(\omega)$ and 
by Lemma \ref{stablembddom} there exist $\kappa(\omega,\gamma_{D})>0$ and $u\in\ho_{\gamma_{D}}(\omega)$ with 
$$\div u=g,\quad
\norm{\na u}_{0,\omega}\leq\kappa(\omega,\gamma_{D})\norm{g}_{0,\omega}.$$
Then $v$, which is the extension by zero of $u$ to $\om$,
belongs to $\ho_{\ga_{D}}(\om)$ and $\supp v=\supp u\subset\overline{\omega}$.
Moreover, $\div v=f$ and 
$\norm{\na v}_{0,\om}
=\norm{\na u}_{0,\omega}
\leq\kappa(\omega,\gamma_{D})\norm{g}_{0,\omega}
=\kappa(\omega,\gamma_{D})\norm{f}_{0,\om}$.
\end{rem}

Our main result reads as follows:

\begin{lem}[stability lemma for exterior domains]
\label{stablemextdom}
There exists $c>0$ such that for all $f\in\lt(\om)$ 
there is a vector field $v\in\ho_{-1,\ga_D}(\om)$ with
$$\div v=f\quad\text{and}\quad\norm{\na v}_{0,\om}\leq c\norm{f}_{0,\om}.$$
The best constant is denoted by $\kappa(\om,\ga_{D})$
which equals the norm of the corresponding right inverse $f\mapsto v$. 
Moreover with \eqref{defkappaupperbound}
$$\kappa(\om,\ga_{D})\leq\hat{\kappa}(\om,\ga_{D}).$$
\end{lem}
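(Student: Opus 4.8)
The plan is to reduce the exterior problem to a whole-space solve corrected by a compactly supported bounded-domain solve, using the cut-off function $\eta$ from \eqref{defeta} as the bridge. First I would extend $f$ by zero to $\hat{f}\in\lt(\reals^{d})$ and apply Lemma \ref{maxlemextdom} to obtain $w\in\ho_{-1}(\reals^{d})$ with $\div w=\hat{f}$ and $\norm{\na w}_{0,\reals^{d}}=\norm{f}_{0,\om}$; the weighted Friedrichs estimate of Lemma \ref{fplemextdom}\,(iii) then gives $\norm{w}_{-1,\reals^{d}}\leq c_{d}\norm{f}_{0,\om}$. Since $\reals^{d}\setminus\om\subset B_{r_{1}}$ and $\eta$ vanishes on $\overline{B}_{r_{1}}$, the field $\eta w$ vanishes in a neighbourhood of $\ga$, so $\eta w\in\ho_{-1,\ga}(\om)\subset\ho_{-1,\ga_{D}}(\om)$, while far out $\eta w=w$ carries the correct divergence.

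The defect is $h:=f-\div(\eta w)=(1-\eta)f-w\cdot\na\eta$, which is supported in $\overline{\omega}$ because $1-\eta$ and $\na\eta$ both vanish for $|x|\geq r_{2}$. Here the bounded-domain stability, Lemma \ref{stablembddom} in the packaged form of Remark \ref{stablemextdomrem}, provides a compactly supported $z$ with $\div z=h$ on $\omega$; extending $z$ by zero yields $z\in\ho_{-1,\ga_{D}}(\om)$. Setting $v:=\eta w+z$ gives $\div v=\div(\eta w)+h=f$ and $v\in\ho_{-1,\ga_{D}}(\om)$, which is the required field; the construction $f\mapsto v$ is linear, so the norm of this right inverse bounds $\kappa(\om,\ga_{D})$.

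For the constant I would split $\na(\eta w)=\eta\na w+w\otimes\na\eta$. The first term is controlled by $\norm{\na w}_{0,\reals^{d}}=\norm{f}_{0,\om}$. For the second, $\na\eta$ is supported in the annulus $r_{1}\leq|x|\leq r_{2}$ with $|\na\eta|\leq\xi'_{\infty}/(r_{2}-r_{1})$, and on that annulus $\rho\leq\rho(r_{2})$, so $\norm{w\otimes\na\eta}_{0,\om}\leq\frac{\xi'_{\infty}}{r_{2}-r_{1}}\rho(r_{2})\norm{w}_{-1,\reals^{d}}\leq c_{d}\frac{\xi'_{\infty}\rho(r_{2})}{r_{2}-r_{1}}\norm{f}_{0,\om}$. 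The same estimate bounds the scalar term $w\cdot\na\eta$ in $h$, so that both $\norm{\na(\eta w)}_{0,\om}$ and $\norm{h}_{0,\om}$ are at most $(1+c_{d}\frac{\xi'_{\infty}\rho(r_{2})}{r_{2}-r_{1}})\norm{f}_{0,\om}$; combining this with $\norm{\na z}_{0,\om}\leq\kappa\norm{h}_{0,\om}$ via the triangle inequality delivers exactly $\hat{\kappa}(\om,\ga_{D})=(1+\kappa)(1+c_{d}\frac{\xi'_{\infty}\rho(r_{2})}{r_{2}-r_{1}})$ from \eqref{defkappaupperbound}.

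The main obstacle is the appearance of the minimum $\kappa=\min\{\kappa(\omega,\gamma_{D}),\kappa(\omega,\gamma)\}$: to be free to use whichever bounded-domain solver is cheaper — in particular the full-Dirichlet solver with constant $\kappa(\omega,\gamma)$, which requires zero mean value — I must verify that $\int_{\omega}h=0$ for every admissible $f$, with no mean condition imposed on $f$ itself. Since $f\in\lt(\om)$ need not be integrable at infinity, this cannot be read off from $\int_{\om}f$; instead I would exploit that $h$ is compactly supported and compute over $B_{r_{2}}$, where $\int_{\omega}h=\int_{B_{r_{2}}}(\hat{f}-\div(\eta w))$. The divergence theorem on the smooth ball gives $\int_{B_{r_{2}}}\hat{f}=\int_{B_{r_{2}}}\div w=\int_{S_{r_{2}}}w\cdot n$ and $\int_{B_{r_{2}}}\div(\eta w)=\int_{S_{r_{2}}}w\cdot n$, using $\eta=1$ on $S_{r_{2}}$, so the two fluxes cancel and $\int_{\omega}h=0$. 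This identity, together with the inclusion $\ho_{\gamma}(\omega)\subset\ho_{\gamma_{D}}(\omega)$ after extension by zero, is precisely what lets the cheaper of the two bounded-domain constants enter the final bound.
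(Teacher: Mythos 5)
Your proposal is correct and follows essentially the same route as the paper's proof: extend $f$ by zero, solve in the whole space via Lemma \ref{maxlemextdom}, cut off with $\eta$, correct by a compactly supported bounded-domain solve via Lemma \ref{stablembddom}, and verify that the defect $h=(1-\eta)f-w\cdot\na\eta$ has zero mean so that the minimum $\kappa=\min\{\kappa(\omega,\gamma_{D}),\kappa(\omega,\gamma)\}$ and hence exactly $\hat{\kappa}(\om,\ga_{D})$ is obtained. The only cosmetic difference is that you check $\int_{\omega}h=0$ by applying the divergence theorem on the ball $B_{r_{2}}$, whereas the paper writes $h=\div\big((1-\eta)v\big)$ and integrates over $\omega$, reducing to the flux $\int_{\ga}n\cdot v=-\int_{\reals^{d}\setminus\overline{\om}}f=0$; both computations express the same cancellation.
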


Note that no mean value condition is imposed on $f$.

\begin{proof}
We extend $f$ by $0$ to $\reals^{d}\setminus\overline{\om}$ and identify $f\in\lt(\reals^{d})$. 
By Lemma \ref{maxlemextdom} we get some
$v\in\ho_{-1}(\reals^{d})$ with $\rot v=0$ solving $\div v=f$ in $\reals^{d}$ and
\begin{align}
\label{navf}
\norm{v}_{-1,\reals^{d}}
\leq c_{d}\norm{\na v}_{0,\reals^{d}},\qquad
\norm{\na v}_{0,\reals^{d}}=\norm{\div v}_{0,\reals^{d}}=\norm{f}_{0,\om}.
\end{align}
Then $\eta v\in\ho_{-1,\ga}(\om)$ with \eqref{defeta}
and $\supp(\eta v)\subset\reals^{d}\setminus B_{r_{1}}$.
We are searching for $v\in\ho_{-1,\ga_D}(\om)$ solving $\div v=f$ in the form
$$v:=\eta v+v_{\omega},$$
where $v_{\omega}\in\ho_{\ga_D}(\om)$ with $\supp v_{\omega}\subset\overline{\omega}$ 
is the extension by zero to $\om$ of some vector field $u\in\ho_{\gamma_{D}}(\omega)$.
Hence, $v$ and $v_{\omega}$ should satisfy
$$f=\div v=\eta f+\na\eta\cdot v+\div v_{\omega}\quad\text{in }\om$$
and we have to find $u\in\ho_{\gamma_{D}}(\omega)$ with
$$\div u=g
:=(1-\eta)f-\na\eta\cdot v\in\lt(\omega)\quad\text{in }\omega.$$
Note that indeed $\supp(1-\eta)\subset\overline{B}_{r_{2}}$,
$\supp\na\eta\subset\overline{B}_{r_{2}}\setminus B_{r_{1}}$
and hence $\supp g\subset\overline{\omega}$. Moreover,
$$g=(1-\eta)f+\na(1-\eta)\cdot v
=\div\big((1-\eta)v\big)\quad\text{in }\reals^{d}$$
and therefore
$$\int_{\omega}g
=\int_{\gamma}(1-\eta)n\cdot v
=\int_{\ga}n\cdot v
=-\int_{\reals^{d}\setminus\overline{\om}}\div v
=-\int_{\reals^{d}\setminus\overline{\om}}f
=0.$$
Thus, $g$ has mean value zero independent of the particular boundary condition on $\ga_{D}$,
i.e., $g\in\lt_{\gamma_{D}}(\omega)$.
Lemma \ref{stablembddom} provides such a $u\in\ho_{\gamma_{D}}(\omega)$ with
$\norm{\na u}_{0,\omega}\leq\kappa(\omega,\gamma_{D})\norm{g}_{0,\omega}$.
We can even pick $u\in\ho_{\gamma}(\omega)\subset\ho_{\gamma_{D}}(\omega)$ with
$\norm{\na u}_{0,\omega}\leq\kappa(\omega,\gamma)\norm{g}_{0,\omega}$.
Hence, generally, we obtain $u\in\ho_{\gamma}(\omega)\subset\ho_{\gamma_{D}}(\omega)$ 
with the stability estimate
$$\norm{\na u}_{0,\omega}\leq\kappa\norm{g}_{0,\omega},\qquad
\kappa=\min\big\{\kappa(\omega,\gamma_{D}),\kappa(\omega,\gamma)\big\},$$
see \eqref{defkappaupperbound}. Thus $v\in\ho_{-1,\ga_D}(\om)$ solves $\div v=f$. 
It remains to show the estimates. Using
\begin{align*}
\norm{\na\eta\cdot v^{\top}}_{0,\reals^{d}},
\norm{\na\eta\cdot v}_{0,\reals^{d}}
&\leq\frac{\xi'_{\infty}}{r_{2}-r_{1}}
\norm{v}_{0,B_{r_{2}}\setminus\overline{B}_{r_{1}}}
\end{align*}
and by \eqref{navf} we compute
\begin{align*}
\bnorm{\na(\eta v)}_{0,\om}
&\leq\underbrace{\norm{\na v}_{0,\reals^{d}}}_{=\norm{f}_{0,\om}}
+\norm{\na\eta\,v^{\top}}_{0,\reals^{d}},\\
\norm{\na v_{\omega}}_{0,\om}
&=\norm{\na u}_{0,\omega}
\leq\kappa(\omega,\gamma_{D})\norm{g}_{0,\omega}
\leq\kappa(\omega,\gamma_{D})\big(\norm{f}_{0,\om}
+\norm{\na\eta\cdot v}_{0,\reals^{d}}\big),\\
\norm{v}_{0,B_{r_{2}}\setminus\overline{B}_{r_{1}}}
&\leq\rho(r_{2})\norm{v}_{-1,\reals^{d}}
\leq c_{d}\rho(r_{2})\underbrace{\norm{\na v}_{0,\reals^{d}}}_{=\norm{f}_{0,\om}},
\end{align*}
which finally proves $\norm{\na v}_{0,\om}\leq\hat{\kappa}(\om,\ga_{D})\norm{f}_{0,\om}$,
finishing the proof.
\end{proof}

\section{Applications for Exterior Domains}

\subsection{Inf-Sup Lemma and Estimates of the Distance to Solenoidal Fields}

A direct consequence of Lemma \ref{stablemextdom} is an estimate for the distance of vector fields
to solenoidal fields:

\begin{cor}[distance lemma for exterior domains]
\label{distlemextdom}
For any $v\in\ho_{-1,\ga_{D}}(\om)$ there exists a solenoidal
$v_{0}\in\s_{-1,\ga_{D}}(\om)$ such that
$$\dist\big(v,\s_{-1,\ga_{D}}(\om)\big)
=\inf_{\phi\in\s_{-1,\ga_{D}}(\om)}\bnorm{\na(v-\phi)}_{0,\om}
\leq\bnorm{\na(v-v_{0})}_{0,\om}
\leq\kappa(\om,\ga_{D})\norm{\div v}_{0,\om}.$$
\end{cor}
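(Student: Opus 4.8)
The plan is to mimic exactly the argument that derives Corollary \ref{distlembddom} from Lemma \ref{stablembddom}, but now substituting the exterior-domain stability result Lemma \ref{stablemextdom} for its bounded-domain counterpart. The crucial structural fact is that membership $v\in\ho_{-1,\ga_{D}}(\om)$ already guarantees $\na v\in\lt(\om)$, hence in particular $\div v\in\lt(\om)$, so the right-hand side data needed to invoke Lemma \ref{stablemextdom} is available without any further regularity assumption.

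First I would fix $v\in\ho_{-1,\ga_{D}}(\om)$ and set $f:=\div v\in\lt(\om)$. Applying Lemma \ref{stablemextdom} to $f$ produces a field $\tilde{v}\in\ho_{-1,\ga_{D}}(\om)$ with $\div\tilde{v}=f=\div v$ together with the stability bound $\norm{\na\tilde{v}}_{0,\om}\leq\kappa(\om,\ga_{D})\norm{\div v}_{0,\om}$. Here it is essential, and in contrast to the bounded case, that Lemma \ref{stablemextdom} imposes no mean value condition on $f$; the compatibility check for $\ga_{D}=\ga$ that appeared in the proof of Corollary \ref{distlembddom} simply does not arise in the exterior setting.

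Next I would define the candidate solenoidal field $v_{0}:=v-\tilde{v}$. Since $\ho_{-1,\ga_{D}}(\om)$ is a linear space, $v_{0}\in\ho_{-1,\ga_{D}}(\om)$, and by construction $\div v_{0}=\div v-\div\tilde{v}=0$, so $v_{0}\in\s_{-1}(\om)$ and therefore $v_{0}\in\s_{-1,\ga_{D}}(\om)=\ho_{-1,\ga_{D}}(\om)\cap\s_{-1}(\om)$. This $v_{0}$ is the required solenoidal representative.

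Finally I would read off the estimate: since $v-v_{0}=\tilde{v}$, we have $\bnorm{\na(v-v_{0})}_{0,\om}=\norm{\na\tilde{v}}_{0,\om}\leq\kappa(\om,\ga_{D})\norm{\div v}_{0,\om}$, and because $v_{0}$ is one admissible competitor in the infimum defining $\dist\big(v,\s_{-1,\ga_{D}}(\om)\big)$, that distance is bounded above by the same quantity, yielding the full chain of inequalities in the statement. I expect no genuine obstacle in this argument, as it is a direct corollary; the only point deserving care is the verification that $v_{0}$ lands in the correct boundary-condition space, which follows purely from the linear structure of $\ho_{-1,\ga_{D}}(\om)$ and the fact that the $\tilde{v}$ supplied by Lemma \ref{stablemextdom} satisfies the same homogeneous Dirichlet condition on $\ga_{D}$ as $v$.
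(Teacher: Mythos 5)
Your proposal is correct and follows exactly the paper's own argument: apply the exterior stability lemma (Lemma \ref{stablemextdom}) to $f=\div v$, set $v_{0}:=v-\tilde{v}$, and read off the estimate, with the correct observation that no mean value compatibility condition is needed in the exterior setting. Nothing to add.
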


\begin{proof}
For $v\in\ho_{-1,\ga_{D}}(\om)$ solve $\div\tilde{v}=\div v$ with $\tilde{v}\in\ho_{-1,\ga_{D}}(\om)$
and the stability estimate
$\norm{\na\tilde{v}}_{0,\om}\leq\kappa(\om,\ga_{D})\norm{\div v}_{0,\om}$
by Lemma \ref{stablemextdom}. Then $v_{0}:=v-\tilde{v}\in\s_{-1,\ga_{D}}(\om)$ and we have
$\bnorm{\na(v-v_{0})}_{0,\om}
=\bnorm{\na\tilde{v}}_{0,\om}
\leq\kappa(\om,\ga_{D})\norm{\div v}_{0,\om}$.
\end{proof}

\begin{cor}[inhomogeneous distance lemma for exterior domains]
\label{inhomodistlemextdom}
For any $v\in\ho_{-1}(\om)$ there exists 
a solenoidal $v_{0}\in\s_{-1}(\om)$ such that
$v_{0}-v\in\ho_{-1,\ga_{D}}(\om)$, i.e., $v_{0}|_{\ga_{D}}=v|_{\ga_{D}}$, and
$\bnorm{\na(v_{0}-v)}_{0,\om}
\leq\kappa(\om,\ga_{D})\norm{\div v}_{0,\om}$.
\end{cor}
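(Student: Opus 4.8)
The plan is to mimic the proof of the inhomogeneous distance lemma for bounded domains (Corollary \ref{inhomodistlembddom}), replacing the use of Lemma \ref{stablembddom} with its exterior-domain analogue, Lemma \ref{stablemextdom}. Given $v\in\ho_{-1}(\om)$, the first step is to observe that $\div v\in\lt(\om)$, so the right-hand side $f:=\div v$ is an admissible datum for Lemma \ref{stablemextdom}. Crucially, in the exterior setting \emph{no} mean value condition is imposed on $f$ (as emphasized right after the statement of Lemma \ref{stablemextdom}), which is exactly why this corollary requires no analogue of the compatibility condition \eqref{meancondbddom} that appeared in the bounded case.

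Next I would invoke Lemma \ref{stablemextdom} to produce a corrector field $\tilde{v}\in\ho_{-1,\ga_{D}}(\om)$ solving $\div\tilde{v}=\div v$ together with the stability estimate $\norm{\na\tilde{v}}_{0,\om}\leq\kappa(\om,\ga_{D})\norm{\div v}_{0,\om}$. Then I set $v_{0}:=v-\tilde{v}$. The verification is then a short chain of observations: since $\div v_{0}=\div v-\div\tilde{v}=0$, the field $v_{0}$ lies in $\s_{-1}(\om)$; since $v_{0}-v=-\tilde{v}\in\ho_{-1,\ga_{D}}(\om)$, the boundary values of $v_{0}$ and $v$ agree on $\ga_{D}$, i.e.\ $v_{0}|_{\ga_{D}}=v|_{\ga_{D}}$; and the desired norm bound follows from $\bnorm{\na(v_{0}-v)}_{0,\om}=\norm{\na\tilde{v}}_{0,\om}\leq\kappa(\om,\ga_{D})\norm{\div v}_{0,\om}$.

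Honestly, there is no genuine obstacle here: the entire content of the corollary is already packaged inside Lemma \ref{stablemextdom}, and this statement is a near-verbatim transcription of the bounded-domain argument into the weighted exterior-domain spaces $\ho_{-1}$ and $\s_{-1}$. The one point worth a moment's attention is that, unlike in the bounded case, I do not need to check any membership of $\div v$ in a restricted space $\lt_{\ga_{D}}(\om)$ nor worry about the $\ga_{D}=\ga$ subcase, precisely because Lemma \ref{stablemextdom} accepts arbitrary $f\in\lt(\om)$. The proof is therefore essentially identical to that of Corollary \ref{inhomodistlemextdom}'s homogeneous counterpart and can be written in two or three lines.
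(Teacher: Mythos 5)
Your proposal is correct and follows essentially the same route as the paper: solve $\div\tilde{v}=\div v$ with $\tilde{v}\in\ho_{-1,\ga_{D}}(\om)$ via Lemma \ref{stablemextdom}, set $v_{0}:=v-\tilde{v}$, and read off the solenoidality, boundary-trace, and norm properties. Your remark that no mean value condition is needed (unlike in the bounded-domain Corollary \ref{inhomodistlembddom}) matches the paper's own emphasis exactly.
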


\begin{proof}
For $v\in\ho_{-1}(\om)$ we solve by Lemma \ref{stablemextdom}
$\div\tilde{v}=\div v$ with some $\tilde{v}\in\ho_{-1,\ga_{D}}(\om)$
and $\norm{\na\tilde{v}}_{0,\om}\leq\kappa(\om,\ga_{D})\norm{\div v}_{0,\om}$. 
Then $v_{0}:=v-\tilde{v}\in\s(\om)$ with $v-v_{0}=\tilde{v}\in\ho_{\ga_{D}}(\om)$ and
$\bnorm{\na(v_{0}-v)}_{0,\om}
=\bnorm{\na\tilde{v}}_{0,\om}
\leq\kappa(\om,\ga_{D})\norm{\div v}_{0,\om}$.
\end{proof}

As in the case of a bounded domain, Corollary \ref{inhomodistlemextdom}
can be seen as a lifting lemma, lifting the boundary datum
$v|_{\ga_{D}}$ to the domain $\om$,
in this case with a solenoidal representative. 
By solving $g=\div v$ Lemma \ref{stablemextdom}
yields immediately also the following inf-sup result:

\begin{cor}[inf-sup lemma for exterior domains]
\label{infsuplemextdom}
It holds
$$\inf_{f\in\lt(\om)}\sup_{v\in\ho_{-1,\ga_{D}}(\om)}
\frac{\scp{f}{\div v}_{0,\om}}{\norm{f}_{0,\om}\norm{\na v}_{0,\om}}\geq\frac{1}{\kappa(\om,\ga_{D})}.$$
\end{cor}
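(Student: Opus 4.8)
The plan is to derive the inf-sup inequality directly from the stability lemma for exterior domains, Lemma~\ref{stablemextdom}, by exhibiting for each fixed $f$ a single test field $v$ that nearly saturates the supremum. This is the same maneuver used to pass from Lemma~\ref{stablembddom} to Corollary~\ref{infsuplembddom} in the bounded case, and the comment preceding the statement (``By solving $g=\div v$'') already signals it.

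First I would fix an arbitrary $f\in\lt(\om)$ with $f\neq0$; the trivial case $f=0$ can be dismissed since it contributes nothing to the infimum over nonzero $f$ (or one observes the quotient is understood only for $f\neq0$). Applying Lemma~\ref{stablemextdom} to this $f$, I obtain a field $v\in\ho_{-1,\ga_{D}}(\om)$ with $\div v=f$ and $\norm{\na v}_{0,\om}\leq\kappa(\om,\ga_{D})\norm{f}_{0,\om}$. The point is that this particular $v$ is a legitimate competitor in the inner supremum. I would note $v\neq0$, so that $\norm{\na v}_{0,\om}>0$ and the quotient is well defined; this follows because $\div v=f\neq0$.

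Second, I would evaluate the quotient at this $v$. Since $\div v=f$, the numerator becomes $\scp{f}{\div v}_{0,\om}=\scp{f}{f}_{0,\om}=\norm{f}_{0,\om}^{2}$, so
$$\frac{\scp{f}{\div v}_{0,\om}}{\norm{f}_{0,\om}\norm{\na v}_{0,\om}}
=\frac{\norm{f}_{0,\om}^{2}}{\norm{f}_{0,\om}\norm{\na v}_{0,\om}}
=\frac{\norm{f}_{0,\om}}{\norm{\na v}_{0,\om}}
\geq\frac{\norm{f}_{0,\om}}{\kappa(\om,\ga_{D})\norm{f}_{0,\om}}
=\frac{1}{\kappa(\om,\ga_{D})},$$
where the inequality uses the stability estimate $\norm{\na v}_{0,\om}\leq\kappa(\om,\ga_{D})\norm{f}_{0,\om}$. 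Hence the supremum over $v$ is at least $1/\kappa(\om,\ga_{D})$ for every $f$, and taking the infimum over $f$ preserves this lower bound, which is exactly the claim.

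I do not expect a genuine obstacle here, as the argument is a one-line substitution once Lemma~\ref{stablemextdom} is in hand; the only point requiring a moment's care is confirming that the chosen $v$ is admissible (it lies in $\ho_{-1,\ga_{D}}(\om)$ by construction) and nonzero so the denominator does not vanish. One could mention that this shows the inf-sup constant is at least $1/\kappa(\om,\ga_{D})$, and that since $\kappa(\om,\ga_{D})$ is by definition the norm of the right inverse $f\mapsto v$, the bound is in fact sharp, giving equality with value $1/\kappa(\om,\ga_{D})$; I would include the reverse inequality only briefly, noting that for any admissible $v$ one has $\scp{f}{\div v}_{0,\om}\leq\norm{f}_{0,\om}\norm{\div v}_{0,\om}\leq\norm{f}_{0,\om}\norm{\na v}_{0,\om}$ after an optimal choice, so no sharper constant than $1/\kappa(\om,\ga_{D})$ is forced, but this refinement is not needed for the stated inequality.
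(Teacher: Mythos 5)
Your proof is correct and follows exactly the paper's (essentially one-line) argument: for each $f\in\lt(\om)$ apply Lemma~\ref{stablemextdom} to produce $v$ with $\div v=f$ and $\norm{\na v}_{0,\om}\leq\kappa(\om,\ga_{D})\norm{f}_{0,\om}$, so the quotient at this $v$ is at least $1/\kappa(\om,\ga_{D})$. Your closing side-remark on sharpness (in particular the claim $\norm{\div v}_{0,\om}\leq\norm{\na v}_{0,\om}$ for all admissible $v$, which with mixed boundary conditions only holds with a dimensional factor) is not rigorous as stated, but you correctly flag it as inessential to the claimed inequality.
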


\subsection{Solution Theory for the Stationary Stokes System}

For $\nu>0$, $F\in\lt_{1}(\om)$, $v_{D}\in\s_{-1}(\om)$
a solution theory for the stationary Stokes problem follows.
The equations or first resp. second order systems 
are the same as in the case of a bounded domain, e.g.,
\begin{center}
\begin{minipage}{60mm}
\begin{align*}
-\Div\sigma
&=F
&
&\text{in }\om,\\
\sigma
&=\nu\na v-p\,\I
&
&\text{in }\om,\\
-\div v
&=0
&
&\text{in }\om,\\
v
&=v_{D}
&
&\text{on }\ga_{D},\\
\sigma n
&=0
&
&\text{on }\ga_{N}
\end{align*}
\end{minipage}
\end{center}
(for simplicity we assume again $\sigma_{N}=0$) resp.
\begin{center}
\begin{minipage}{60mm}
\begin{align*}
-\nu\Delta v+\na p
&=F
&
&\text{in }\om,\\
-\div v
&=0
&
&\text{in }\om,\\
v
&=v_{D}
&
&\text{on }\ga_{D},\\
(\nu\na v-p)n
&=0
&
&\text{on }\ga_{N}
\end{align*}
\end{minipage}
\end{center}
with additional proper decay conditions at infinity $v\in\lt_{-1}(\om)$ and $\na v\in\lt(\om)$
which read in classical point wise terms (more vaguely) as
$$v(x)\xrightarrow{|x|\to\infty}0.$$
Note that \eqref{meanbcbddom} is no longer a necessary condition in the case of an exterior domain
due to the possible lack of integrability.
Therefore, our lifting lemma Corollary \ref{inhomodistlemextdom}
does not need an additional assumption on $\div v$
as in Corollary \ref{inhomodistlembddom}.
Let us assume a slightly more general viscosity\footnote{The viscosity 
$\nu$ can even be assumed to be a bounded, positive definite, symmetric tensor field.
Moreover,  we note that $\nu_{-}|T|^2\leq|\nu^{1/2}T|^2=\nu T:T\leq\nu_{+}|T|^2$
and thus also $\nu_{-}|\nu^{-1/2}T|^2\leq|T|^2\leq\nu_{+}|\nu^{-1/2}T|^2$.}
$\nu\in\li(\om)$,
bounded from below and above by two positive constants
$\nu_{-}$ amd $\nu_{+}$, respectively.
A possible variational formulation (see, e.g., \cite{La,GiSe}) is given by the following:
Find $v\in v_{D}+\s_{-1,\ga_{D}}(\om)$ such that for all $\phi\in\s_{-1,\ga_{D}}(\om)$
$$\scp{\nu\na v}{\na\phi}_{0,\om}
=\scp{F}{\phi}_{0,\om}.$$
Note that by $\scp{F}{\phi}_{0,\om}=\bscp{\rho\,F}{\rho^{-1}\phi}_{0,\om}$ 
the right hand side is well defined.
Using the ansatz $v=v_{D}+\hat{v}$ with $\hat{v}\in\s_{-1,\ga_{D}}(\om)$ 
we reduce this formulation to find $\hat{v}\in\s_{-1,\ga_{D}}(\om)$ 
such that for all $\phi\in\s_{-1,\ga_{D}}(\om)$
$$\scp{\nu\na\hat{v}}{\na\phi}_{0,\om}
=\scp{F}{\phi}_{0,\om}
-\scp{\nu\na v_{D}}{\na\phi}_{0,\om}.$$
Again, another formulation taking the pressure into account 
and removing the unpleasant solenoidal condition from the Hilbert space
is the following saddle point formulation: 
Find $(\hat{v},p)\in\ho_{-1,\ga_{D}}(\om)\times\lt(\om)$
such that for all $(\phi,\varphi)\in\ho_{-1,\ga_{D}}(\om)\times\lt(\om)$
\begin{align}
\label{spformextstokes}
\begin{split}
\scp{\nu\na\hat{v}}{\na\phi}_{0,\om}
-\scp{p}{\div\phi}_{0,\om}
&=\scp{F}{\phi}_{0,\om}
-\scp{\nu\na v_{D}}{\na\phi}_{0,\om},\\
-\scp{\div\hat{v}}{\varphi}_{0,\om}
&=0.
\end{split}
\end{align}

\begin{cor}[Stokes lemma for exterior domains]
\label{stokeslemextdom}
For $\nu$, $F\in\lt_{1}(\om)$, $v_{D}\in\s_{-1}(\om)$
the Stokes system is uniquely solvable with 
$v=v_{D}+\hat{v}\in v_{D}+\s_{-1,\ga_{D}}(\om)\subset\s_{-1}(\om)$
and $p\in\lt(\om)$. Moreover, 
\begin{align*}
\nu\norm{\na\hat{v}}_{0,\om}
&\leq c_{fp}(\om,\ga_{D})\norm{F}_{1,\om}
+\nu\norm{\na v_{D}}_{0,\om},\\
\nu\norm{\na v}_{0,\om}
&\leq c_{fp}(\om,\ga_{D})\norm{F}_{1,\om}
+2\nu\norm{\na v_{D}}_{0,\om},\\
\norm{p}_{0,\om}
&\leq 2\kappa(\om,\ga_{D})\big(c_{fp}(\om,\ga_{D})\norm{F}_{1,\om}
+\nu\norm{\na v_{D}}_{0,\om}\big).
\end{align*}
\end{cor}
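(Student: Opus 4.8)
The plan follows the bounded-domain template of Corollary \ref{stokeslembddom}, adapted to the weighted exterior setting. The key structural observation is that the inf-sup lemma for exterior domains (Corollary \ref{infsuplemextdom}) plays exactly the role that Corollary \ref{infsuplembddom} played in the bounded case, so the abstract saddle-point machinery applies almost verbatim once the correct function spaces and dualities are identified. First I would verify that the bilinear forms in \eqref{spformextstokes} are bounded and coercive on the appropriate subspaces: coercivity of $\scp{\nu\na\,\cdot\,}{\na\,\cdot\,}_{0,\om}$ on $\s_{-1,\ga_{D}}(\om)$ follows immediately from the lower bound $\nu\geq\nu_{-}$ together with the fact that $\norm{\na\,\cdot\,}_{0,\om}$ is a genuine norm on $\ho_{-1,\ga_{D}}(\om)$ by the Friedrichs/Poincar\'e lemma (Lemma \ref{fplemextdom}). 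The right-hand side functional is well defined because $\scp{F}{\phi}_{0,\om}=\bscp{\rho F}{\rho^{-1}\phi}_{0,\om}\leq\norm{F}_{1,\om}\norm{\phi}_{-1,\om}$, and then $\norm{\phi}_{-1,\om}\leq c_{fp}(\om,\ga_{D})\norm{\na\phi}_{0,\om}$ bounds it by the energy norm.

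With these ingredients, the abstract Babu\v{s}ka--Brezzi theory yields unique solvability of \eqref{spformextstokes} for $(\hat{v},p)$, where the inf-sup constant $1/\kappa(\om,\ga_{D})$ from Corollary \ref{infsuplemextdom} controls the pressure block. The reduced solenoidal formulation (find $\hat{v}\in\s_{-1,\ga_{D}}(\om)$) is then an application of Lax--Milgram on the Hilbert space $\s_{-1,\ga_{D}}(\om)$, equipped with the inner product $\scp{\na\,\cdot\,}{\na\,\cdot\,}_{0,\om}$, and the two formulations agree in $\hat v$. The membership $v=v_{D}+\hat v\in\s_{-1}(\om)$ is immediate since both $v_{D}$ and $\hat v$ are solenoidal and lie in $\ho_{-1}(\om)$.

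For the energy estimates I would test the reduced equation with $\phi=\hat{v}$: using coercivity on the left, and on the right $\scp{F}{\hat v}_{0,\om}\leq c_{fp}(\om,\ga_{D})\norm{F}_{1,\om}\norm{\na\hat v}_{0,\om}$ together with $\scp{\nu\na v_{D}}{\na\hat v}_{0,\om}\leq\nu\norm{\na v_{D}}_{0,\om}\norm{\na\hat v}_{0,\om}$, dividing by $\norm{\na\hat v}_{0,\om}$ gives the first bound. The second bound follows from $v=v_{D}+\hat v$ and the triangle inequality, producing the factor $2$ on the $v_{D}$ term. For the pressure estimate I would recover $p$ from the first line of \eqref{spformextstokes}: the inf-sup condition lets me choose, for the functional $\phi\mapsto\scp{\nu\na\hat v-\nu\na v_{D}+\text{(source)}}{\na\phi}$, a test field realising $\div\phi=p$ via Lemma \ref{stablemextdom}, which injects the factor $\kappa(\om,\ga_{D})$ and the factor $2$ exactly as in the bounded-domain proof.

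The main obstacle, and the only place where the exterior geometry genuinely matters, is ensuring that all the dualities involving $F$, $v_D$, and the pressure respect the weighted spaces $\lt_{\pm1}(\om)$ rather than the plain $\lt(\om)$; in particular one must check that $p\in\lt(\om)$ (unweighted) is the correct pressure space and that the mixed duality $\scp{p}{\div\phi}_{0,\om}$ is finite for $\phi\in\ho_{-1,\ga_{D}}(\om)$, which holds because $\div\phi\in\lt(\om)$. No mean-value condition on the data is needed, since as noted after Corollary \ref{inhomodistlemextdom} the lack of integrability of constants removes the compatibility constraint \eqref{meanbcbddom}; this is precisely why the exterior version drops the restriction to $\lt_{\gamma_{D}}$ that appeared in Corollary \ref{stokeslembddom}. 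Once the weighted well-posedness of the pairings is confirmed, the rest is the standard saddle-point argument referenced in the bounded-domain proof.
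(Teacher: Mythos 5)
Your proposal is correct and follows essentially the same route as the paper: standard saddle-point (Babu\v{s}ka--Brezzi) theory combined with the inf-sup lemma for exterior domains (Corollary \ref{infsuplemextdom}), coercivity of the principal part via the weighted Friedrichs/Poincar\'e estimate (Lemma \ref{fplemextdom}), and recovery of the pressure bound by solving $\div\phi_{p}=p$ with Lemma \ref{stablemextdom}. The paper's proof is merely a terse summary of exactly these steps, so your write-up is a faithful, more detailed version of the intended argument, including the correct handling of the weighted duality $\scp{F}{\phi}_{0,\om}=\bscp{\rho F}{\rho^{-1}\phi}_{0,\om}$ and the absence of any mean-value compatibility condition.
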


\begin{proof}
Standard saddle point theory and the inf-sup lemma, Corollary \ref{infsuplemextdom},
shows existence and the estimates follow by standard arguments,
which provide also uniqueness.
Note that by the Friedrichs/Poincar\'e estimates in Lemma \ref{fplemextdom}
the principal part of the bilinear form is positive
over $\ho_{-1,\ga_{D}}(\om)$, and that we solve $p=\div\phi$ by Lemma \ref{stablemextdom}
to get the estimates for the pressure $p$.
\end{proof}

\subsection{A Posteriori Error Estimates for Stationary Stokes Equations}

Before proceeding, we need one more polynomial weighted Sobolev space.
For this, we recall $\Div$ acting as usual row wise on $\reals^{d\times d}$-tensor fields and define
$$\tilde{\d}(\om)
:=\setb{\Theta\in\lt(\om)}{\Div\Theta\in\lt_{1}(\om)},\qquad
\tilde{\d}_{\ga_{N}}(\om),$$
where $\tilde{\d}_{\ga_{N}}(\om)$ is the closure of 
$\ci_{\ga_{N}}(\om)$-tensor fields in the norm of the Sobolev space $\tilde{\d}(\om)$.
Then we observe for all $\phi\in\ho_{-1,\ga_{D}}(\om)$ and all $\tau\in\tilde{\d}_{\ga_{N}}(\om)$
\begin{align}
\label{partinttildeD}
\scp{\tau}{\na\phi}_{0,\om}=-\scp{\Div\tau}{\phi}_{0,\om}.
\end{align}
Note that the right hand side is well defined since 
$\scp{\Div\tau}{\phi}_{0,\om}=\scp{\rho\Div\tau}{\rho^{-1}\phi}_{0,\om}$.

From now on we assume that we have approximations 
$$\tilde{v}\in\lt_{-1}(\om),\qquad
\tilde{p}\in\lt(\om),\qquad
\tilde{T}\in\lt(\om),\qquad
\tilde{\sigma}\in\lt(\om)$$
of our exact solutions from \eqref{spformextstokes} and Corollary \ref{stokeslemextdom}
\begin{align*}
v=v_{D}+\hat{v}&\in v_{D}+\s_{-1,\ga_{D}}(\om)\subset\s_{-1}(\om),
&
T&:=\na v\in\lt(\om),\\
p&\in\lt(\om),
&
\sigma&\;=\nu\na v-p\,\I\in\lt(\om),
\end{align*}
respectively, for given data $\nu$, $F\in\lt_{1}(\om)$, and $v_{D}\in\s_{-1}(\om)$. 
We recall from \eqref{spformextstokes} that
$(v,p)$ solves for all $\phi\in\ho_{-1,\ga_{D}}(\om)$
\begin{align}
\label{spform2extstokes}
\scp{\nu\na v}{\na\phi}_{0,\om}
-\scp{p}{\div\phi}_{0,\om}
&=\scp{F}{\phi}_{0,\om}.
\end{align}

\subsubsection{A Posteriori Estimates for the Velocity Field: Solenoidal Approximations}

First, we assume the simplest case that 
$$\tilde{T}=\na\tilde{v},\qquad
\tilde{v}\in v_{D}+\s_{-1,\ga_{D}}(\om)\subset\s_{-1}(\om),$$
i.e., $\tilde{v}-v_{D}\in\s_{-1,\ga_{D}}(\om)$.
Then by \eqref{spform2extstokes} we have for all solenoidal $\phi\in\s_{-1,\ga_{D}}(\om)$
$$\bscp{\nu\na(v-\tilde{v})}{\na\phi}_{0,\om}
=\scp{F}{\phi}_{0,\om}
-\scp{\nu\na\tilde{v}}{\na\phi}_{0,\om}.$$
Let $\tau\in\tilde{\d}_{\ga_{N}}(\om)$ and $q\in\lt(\om)$.
Using \eqref{partinttildeD} 
and $\scp{q\,\I}{\na\phi}_{0,\om}=0$ 
\big(actually this holds for all $\phi\in\s_{-1}(\om)$ since $\I:\na\phi=\div\phi$\big) 
as well as the Friedrichs/Poincar\'e estimate from Lemma \ref{fplemextdom} we compute
\begin{align}
\label{apostestcompsol}
\begin{split}
&\qquad\bscp{\nu\na(v-\tilde{v})}{\na\phi}_{0,\om}\\
&=\scp{\Div\tau+F}{\phi}_{0,\om}
+\scp{\tau+q\,\I-\nu\na\tilde{v}}{\na\phi}_{0,\om}\\
&\leq\norm{\Div\tau+F}_{1,\om}
\norm{\phi}_{-1,\om}
+\bnorm{\nu^{-1/2}(\tau+q\,\I-\nu\na\tilde{v})}_{0,\om}
\norm{\nu^{1/2}\na\phi}_{0,\om}\\
&\leq\Big(\nu_{-}^{-1/2}c_{fp}(\om,\ga_{D})\norm{\Div\tau+F}_{1,\om}
+\bnorm{\nu^{-1/2}(\tau+q\,\I-\nu\na\tilde{v})}_{0,\om}
\Big)\norm{\nu^{1/2}\na\phi}_{0,\om}.
\end{split}
\end{align}
Choosing $\phi_{v}:=v-\tilde{v}=\hat{v}+v_{D}-\tilde{v}\in\s_{-1,\ga_{D}}(\om)$ 
shows a first a posteriori estimate:

\begin{theo}[a posteriori error estimate I for exterior domains]
\label{aposttheo1extdom}
Let $\tilde{v}\in v_{D}+\s_{-1,\ga_{D}}(\om)$.
Then for all $\tau\in\tilde{\d}_{\ga_{N}}(\om)$ and all $q\in\lt(\om)$ it holds
$$\norm{\nu^{1/2}\na(v-\tilde{v})}_{0,\om}
\leq\nu_{-}^{-1/2}c_{fp}(\om,\ga_{D})\norm{\Div\tau+F}_{1,\om}
+\bnorm{\nu^{-1/2}(\tau+q\,\I-\nu\na\tilde{v})}_{0,\om}.$$
\end{theo}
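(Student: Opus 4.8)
The plan is to exploit the fact that the chain of estimates culminating in \eqref{apostestcompsol} has already been established for an \emph{arbitrary} solenoidal test function $\phi\in\s_{-1,\ga_{D}}(\om)$. Consequently the theorem should follow by a single judicious choice of $\phi$ followed by a cancellation, and I would spend no effort re-deriving \eqref{apostestcompsol}, focusing instead on verifying that the intended test field is admissible and that the resulting inequality collapses to the claimed bound.

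First I would confirm that the field $\phi_{v}:=v-\tilde{v}$ genuinely belongs to $\s_{-1,\ga_{D}}(\om)$, so that it may legitimately be inserted into \eqref{apostestcompsol}. By Corollary \ref{stokeslemextdom} the exact velocity satisfies $v\in v_{D}+\s_{-1,\ga_{D}}(\om)$, and the hypothesis of the theorem gives $\tilde{v}\in v_{D}+\s_{-1,\ga_{D}}(\om)$ as well. Since $\s_{-1,\ga_{D}}(\om)$ is a linear subspace, the affine offset $v_{D}$ cancels in the difference and $\phi_{v}=(v-v_{D})-(\tilde{v}-v_{D})\in\s_{-1,\ga_{D}}(\om)$, as required. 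This observation is the only genuinely new ingredient, and I expect it to be the main (and rather mild) obstacle: everything hinges on recognising that both the exact and the approximate velocities share the same affine part $v_{D}$.

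Next I would substitute $\phi=\phi_{v}$ into \eqref{apostestcompsol}. On the left-hand side the bilinear form becomes $\bscp{\nu\na(v-\tilde{v})}{\na(v-\tilde{v})}_{0,\om}=\norm{\nu^{1/2}\na(v-\tilde{v})}_{0,\om}^{2}$, while on the right-hand side the factor $\norm{\nu^{1/2}\na\phi}_{0,\om}$ reduces to exactly $\norm{\nu^{1/2}\na(v-\tilde{v})}_{0,\om}$. Both sides therefore carry a common factor of $\norm{\nu^{1/2}\na(v-\tilde{v})}_{0,\om}$. If this factor vanishes the asserted inequality holds trivially; otherwise I would divide through by it, removing one power on each side and leaving precisely the bound stated in the theorem. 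No further ingredients beyond \eqref{apostestcompsol}, Corollary \ref{stokeslemextdom}, and the linearity of $\s_{-1,\ga_{D}}(\om)$ are needed.
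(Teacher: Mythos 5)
Your proposal is correct and follows exactly the paper's own route: the paper establishes \eqref{apostestcompsol} for all $\phi\in\s_{-1,\ga_{D}}(\om)$ and then simply chooses $\phi_{v}:=v-\tilde{v}=\hat{v}+v_{D}-\tilde{v}\in\s_{-1,\ga_{D}}(\om)$, which yields the stated bound after cancelling the common factor $\norm{\nu^{1/2}\na(v-\tilde{v})}_{0,\om}$. Your verification that $v-\tilde{v}$ lies in $\s_{-1,\ga_{D}}(\om)$ (via Corollary \ref{stokeslemextdom} and linearity) and your handling of the degenerate case where this factor vanishes match the paper's argument, merely spelled out in more detail.
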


The upper bound coincides with 
the norm of the error on the left hand side,
if $\tau=\sigma$ (i.e., $\tau$ coincides with the exact stress tensor)
and $q=p$ (i.e., $q$ represents the exact pressure $p$), i.e., we have
\begin{align*}
&\qquad\norm{\nu^{1/2}\na(v-\tilde{v})}_{0,\om}\\
&=\min_{\substack{\tau\in\tilde{\d}_{\ga_{N}}(\om),\\q\in\lt(\om)}}
\Big(\nu_{-}^{-1/2}c_{fp}(\om,\ga_{D})\norm{\Div\tau+F}_{1,\om}
+\bnorm{\nu^{-1/2}(\tau+q\,\I-\nu\na\tilde{v})}_{0,\om}\Big)
\end{align*}
and the minimum is attained at $(\tau,q)=(\sigma,p)$.
However, Theorem \ref{aposttheo1extdom} has a drawback: 
The estimate is valid only for those approximate vector fields $\tilde{v}$, 
which exactly satisfy the solenoidal condition and the boundary condition.
In practice, the solenoidal requirement is difficult to fulfill and approximations arising
in `real life' computations often satisfy the solenoidal condition only
approximately. Therefore, our next goal is to extend the estimate 
to a wider class of non-solenoidal vector fields.

\subsubsection{A Posteriori Estimates for the Velocity Field: Non-Solenoidal Approximations}

Now we assume only
$$\tilde{T}=\na\tilde{v},\qquad
\tilde{v}\in v_{D}+\ho_{-1,\ga_{D}}(\om)\subset\ho_{-1}(\om),$$
i.e., $\tilde{v}-v_{D}\in\ho_{-1,\ga_{D}}(\om)$, this is
$\tilde{v}$ is not solenoidal but satisfies the boundary condition exactly.
Utilizing the stability lemma, Lemma \ref{stablemextdom},
there exists $w\in\ho_{-1,\ga_D}(\om)$ such that
$\div w=-\div\tilde{v}$ and $\norm{\na w}_{0,\om}\leq\kappa(\om,\ga_{D})\norm{\div\tilde{v}}_{0,\om}$.
Then $\tilde{v}_{0}:=\tilde{v}+w\in v_{D}+\s_{-1,\ga_D}(\om)$ 
and by Theorem \ref{aposttheo1extdom}
\begin{align}
\nonumber
\bnorm{\nu^{1/2}\na(v-\tilde{v})}_{0,\om}
&\leq\bnorm{\nu^{1/2}\na(v-\tilde{v}_{0})}_{0,\om}
+\norm{\nu^{1/2}\na w}_{0,\om}\\
\nonumber
&\leq\nu_{-}^{-1/2}c_{fp}(\om,\ga_{D})\norm{\Div\tau+F}_{1,\om}
+\bnorm{\nu^{-1/2}(\tau+q\,\I-\nu\na\tilde{v}_{0})}_{0,\om}\\
\label{apostestcompnonsol}
&\qquad+\norm{\nu^{1/2}\na w}_{0,\om}\\
\nonumber
&\leq\nu_{-}^{-1/2}c_{fp}(\om,\ga_{D})\norm{\Div\tau+F}_{1,\om}
+\bnorm{\nu^{-1/2}(\tau+q\,\I-\nu\na\tilde{v})}_{0,\om}\\
\nonumber
&\qquad+2\norm{\nu^{1/2}\na w}_{0,\om}.
\end{align}

\begin{theo}[a posteriori error estimate II for exterior domains]
\label{aposttheo2extdom}
Let $\tilde{v}\in v_{D}+\ho_{-1,\ga_{D}}(\om)$.
Then for all $\tau\in\tilde{\d}_{\ga_{N}}(\om)$ and all $q\in\lt(\om)$ it holds
\begin{align*}
\norm{\nu^{1/2}\na(v-\tilde{v})}_{0,\om}
&\leq\nu_{-}^{-1/2}c_{fp}(\om,\ga_{D})\norm{\Div\tau+F}_{1,\om}
+\bnorm{\nu^{-1/2}(\tau+q\,\I-\nu\na\tilde{v})}_{0,\om}\\
&\qquad+2\nu_{+}^{1/2}\kappa(\om,\ga_{D})\norm{\div\tilde{v}}_{0,\om}.
\end{align*}
\end{theo}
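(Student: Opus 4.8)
The plan is to reduce everything to the already-proved solenoidal case, Theorem \ref{aposttheo1extdom}, by first correcting the given non-solenoidal approximation $\tilde{v}$ into a genuinely divergence-free field via the stability lemma. Since $\tilde{v}-v_{D}\in\ho_{-1,\ga_{D}}(\om)$ while $\div\tilde{v}$ need not vanish, I would apply Lemma \ref{stablemextdom} to the right hand side $-\div\tilde{v}\in\lt(\om)$ to obtain a field $w\in\ho_{-1,\ga_{D}}(\om)$ with $\div w=-\div\tilde{v}$ and the stability bound $\norm{\na w}_{0,\om}\leq\kappa(\om,\ga_{D})\norm{\div\tilde{v}}_{0,\om}$. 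The crucial structural point is that $w$ carries the correct homogeneous Dirichlet condition on $\ga_{D}$, so that the corrected field $\tilde{v}_{0}:=\tilde{v}+w$ satisfies $\tilde{v}_{0}-v_{D}=(\tilde{v}-v_{D})+w\in\s_{-1,\ga_{D}}(\om)$, that is, $\tilde{v}_{0}\in v_{D}+\s_{-1,\ga_{D}}(\om)$ is an admissible solenoidal approximation to which Theorem \ref{aposttheo1extdom} genuinely applies.

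Next I would split the error by the triangle inequality,
$$\norm{\nu^{1/2}\na(v-\tilde{v})}_{0,\om}\leq\norm{\nu^{1/2}\na(v-\tilde{v}_{0})}_{0,\om}+\norm{\nu^{1/2}\na w}_{0,\om},$$
estimate the first term on the right by applying Theorem \ref{aposttheo1extdom} to $\tilde{v}_{0}$ for the same $\tau\in\tilde{\d}_{\ga_{N}}(\om)$ and $q\in\lt(\om)$, and then replace $\na\tilde{v}_{0}$ by $\na\tilde{v}$ inside the local term $\bnorm{\nu^{-1/2}(\tau+q\,\I-\nu\na\tilde{v}_{0})}_{0,\om}$. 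Because $\nu\na\tilde{v}_{0}=\nu\na\tilde{v}+\nu\na w$, a second triangle inequality gives $\bnorm{\nu^{-1/2}(\tau+q\,\I-\nu\na\tilde{v}_{0})}_{0,\om}\leq\bnorm{\nu^{-1/2}(\tau+q\,\I-\nu\na\tilde{v})}_{0,\om}+\norm{\nu^{1/2}\na w}_{0,\om}$, so the divergence correction $w$ enters the chain exactly twice.

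Finally I would collect the two copies of $\norm{\nu^{1/2}\na w}_{0,\om}$ and bound them uniformly using the upper viscosity bound together with the stability estimate for $w$, namely $\norm{\nu^{1/2}\na w}_{0,\om}\leq\nu_{+}^{1/2}\norm{\na w}_{0,\om}\leq\nu_{+}^{1/2}\kappa(\om,\ga_{D})\norm{\div\tilde{v}}_{0,\om}$. Substituting this into the assembled inequality produces precisely the claimed majorant, with the additional term $2\nu_{+}^{1/2}\kappa(\om,\ga_{D})\norm{\div\tilde{v}}_{0,\om}$. Conceptually there is no genuine obstacle, since all the analytic tools are already available; the only points deserving care are the bookkeeping verification that the correction $w$ keeps $\tilde{v}_{0}$ inside the affine solenoidal class $v_{D}+\s_{-1,\ga_{D}}(\om)$, and the observation that $w$ contributes two equal terms—one from splitting the error and one from swapping $\na\tilde{v}_{0}$ for $\na\tilde{v}$—which is exactly the origin of the factor $2$.
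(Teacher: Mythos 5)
Your proposal is correct and follows essentially the same route as the paper's own proof: the correction field $w$ from Lemma \ref{stablemextdom} with $\div w=-\div\tilde{v}$, the solenoidal field $\tilde{v}_{0}=\tilde{v}+w$ fed into Theorem \ref{aposttheo1extdom}, the two triangle inequalities (splitting the error and swapping $\na\tilde{v}_{0}$ for $\na\tilde{v}$) producing the two copies of $\norm{\nu^{1/2}\na w}_{0,\om}$, and the final bound $\norm{\nu^{1/2}\na w}_{0,\om}\leq\nu_{+}^{1/2}\kappa(\om,\ga_{D})\norm{\div\tilde{v}}_{0,\om}$. Your accounting of where the factor $2$ comes from matches the paper's chain of estimates exactly.
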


If the approximation $\tilde{v}$ is solenoidal we get back Theorem \ref{aposttheo1extdom}
and, again, the upper bound coincides with the norm of the error on the left hand side
if $\tau=\sigma$, $q=p$.
If the approximation $\tilde{v}$ is solenoidal just in, e.g., $\reals^{d}\setminus\overline{B}_{r_{2}}$
then we get trivially an estimate by Theorem \ref{aposttheo2extdom}, replacing 
the term $\norm{\div\tilde{v}}_{0,\om}$ by $\norm{\div\tilde{v}}_{0,\omega}$.
But with a moderate additional assumption on the decay of the approximation 
we can even do better in this case, replacing the stability constant $\kappa(\om,\ga_{D})$
by a stability constant of the bounded domain $\omega$.
For this let $\tilde{v}=v_{D}+w\in v_{D}+\ho_{-1,\ga_{D}}(\om)$
with $\div\tilde{v}=\div w=0$ in $\reals^{d}\setminus\overline{B}_{r_{2}}$ and additionally,
if $\gamma_{D}=\gamma$, i.e., $\ga_{D}=\ga$,
\begin{align}
\label{pwdecay}
|w|\leq c\,r^{-m},\qquad
m>d-1
\end{align}
for $r\to\infty$ with some $c>0$ independent of $r$.
Note that for $r^{-m}\in\lt_{-1}(\reals^{d}\setminus\overline{B}_{1})$ 
it is sufficient that $m>d/2-1$.
We consider the ansatz
$$\tilde{v}_{0}:=\tilde{v}+\begin{cases}u&\text{in }\omega,\\0&\text{in }\reals^{d}\setminus\overline{B}_{r_{2}},\end{cases}$$
with $u\in\ho_{\gamma_{D}}(\omega)$ and $\div u=-\div\tilde{v}$ in $\omega$. 
Utilizing Lemma \ref{stablembddom} we find such a $u$ together with the stability estimate
$\norm{\na u}_{0,\omega}\leq\kappa(\omega,\gamma_{D})\norm{\div\tilde{v}}_{0,\omega}$,
provided that in the case $\gamma_{D}=\gamma$, i.e., $\ga_{D}=\ga$, additionally
$\div\tilde{v}\in\lt_{\bot}(\omega)$ holds. For this we notice (for $\ga_{D}=\ga$)
that for any $r>r_{2}$
$$\big|\int_{\omega}\div\tilde{v}\big|
=\big|\int_{\omega}\div w\big|
=\big|\int_{S_{r}}n\cdot w\big|
\leq c\,r^{d-1-m}\xrightarrow{r\to\infty}0.$$
Therefore, $\tilde{v}_{0}\in v_{D}+\s_{-1,\ga_{D}}(\om)$
is an admissible vector field for \eqref{apostestcompnonsol} showing
\begin{align*}
\norm{\nu^{1/2}\na(v-\tilde{v})}_{0,\om}
&\leq\nu_{-}^{-1/2}c_{fp}(\om,\ga_{D})\norm{\Div\tau+F}_{1,\om}
+\bnorm{\nu^{-1/2}(\tau+q\,\I-\nu\na\tilde{v})}_{0,\om}\\
&\qquad+2\nu_{+}^{1/2}\underbrace{\norm{\na(\tilde{v}_{0}-\tilde{v})}_{0,\om}}_{=\norm{\na u}_{0,\omega}}.
\end{align*}
Hence we have the following:

\begin{cor}[a posteriori error estimate III for exterior domains]
\label{apostcor3extdom}
Let $\tilde{v}\in v_{D}+\ho_{-1,\ga_{D}}(\om)$
with $\div\tilde{v}=0$ in $\reals^{d}\setminus\overline{B}_{r_{2}}$ and, if $\ga_{D}=\ga$, \eqref{pwdecay}.
Then for all $\tau\in\tilde{\d}_{\ga_{N}}(\om)$ and all $q\in\lt(\om)$
\begin{align*}
\norm{\nu^{1/2}\na(v-\tilde{v})}_{0,\om}
&\leq\nu_{-}^{-1/2}c_{fp}(\om,\ga_{D})\norm{\Div\tau+F}_{1,\om}
+\bnorm{\nu^{-1/2}(\tau+q\,\I-\nu\na\tilde{v})}_{0,\om}\\
&\qquad+2\nu_{+}^{1/2}\kappa(\omega,\gamma_{D})\norm{\div\tilde{v}}_{0,\omega}.
\end{align*}
\end{cor}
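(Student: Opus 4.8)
The plan is to rerun the argument behind Theorem~\ref{aposttheo2extdom}, but to confine the divergence correction to the bounded truncation $\omega$ so that the bounded-domain constant $\kappa(\omega,\gamma_{D})$ replaces the exterior constant $\kappa(\om,\ga_{D})$. Writing $\tilde{v}=v_{D}+w$ with $w\in\ho_{-1,\ga_{D}}(\om)$ and noting $\div w=\div\tilde{v}$ (as $v_{D}$ is solenoidal), the hypothesis $\div\tilde{v}=0$ in $\reals^{d}\setminus\overline{B}_{r_{2}}$ localizes the divergence to $\overline{\omega}$. I would therefore construct a corrector $u\in\ho_{\gamma_{D}}(\omega)$ with $\div u=-\div\tilde{v}$ in $\omega$, extend it by zero beyond $\overline{B}_{r_{2}}$, and set
$$\tilde{v}_{0}:=\tilde{v}+\begin{cases}u&\text{in }\omega,\\0&\text{in }\reals^{d}\setminus\overline{B}_{r_{2}}.\end{cases}$$
Since $u$ vanishes on $\gamma_{D}=\ga_{D}\cup S_{r_{2}}$, its zero extension lies in $\ho_{-1,\ga_{D}}(\om)$ and matches across $S_{r_{2}}$; combined with $\div u=-\div\tilde{v}$ inside $\omega$ this makes $\tilde{v}_{0}-v_{D}\in\s_{-1,\ga_{D}}(\om)$, so $\tilde{v}_{0}$ is admissible for Theorem~\ref{aposttheo1extdom}.

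To obtain $u$ with the sharp estimate $\norm{\na u}_{0,\omega}\leq\kappa(\omega,\gamma_{D})\norm{\div\tilde{v}}_{0,\omega}$ I would apply the bounded-domain stability lemma, Lemma~\ref{stablembddom}, on $\omega$. This is legitimate precisely when $-\div\tilde{v}\in\lt_{\gamma_{D}}(\omega)$, and the only nonautomatic part of this membership is the vanishing-mean compatibility condition $\int_{\omega}\div\tilde{v}=0$ in the degenerate case $\gamma_{D}=\gamma$ (equivalently $\ga_{D}=\ga$). Verifying this condition is the crux of the proof and is exactly what the decay hypothesis \eqref{pwdecay} is designed to deliver.

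Concretely, for $\ga_{D}=\ga$ I would exploit that $\div w=0$ in the exterior: for every $r>r_{2}$ the divergence theorem on the annulus turns the volume integral into a flux,
$$\Big|\int_{\omega}\div\tilde{v}\Big|=\Big|\int_{\omega}\div w\Big|=\Big|\int_{S_{r}}n\cdot w\Big|\leq c\,r^{d-1-m},$$
and the pointwise bound $|w|\leq c\,r^{-m}$ with $m>d-1$ forces the right-hand side to $0$ as $r\to\infty$. Hence $\div\tilde{v}\in\lt_{\bot}(\omega)=\lt_{\gamma_{D}}(\omega)$ and Lemma~\ref{stablembddom} yields $u$ with the claimed bound.

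Finally I would reproduce the triangle-inequality chain \eqref{apostestcompnonsol} with the present $\tilde{v}_{0}$: starting from $\norm{\nu^{1/2}\na(v-\tilde{v})}_{0,\om}\leq\norm{\nu^{1/2}\na(v-\tilde{v}_{0})}_{0,\om}+\norm{\nu^{1/2}\na(\tilde{v}_{0}-\tilde{v})}_{0,\om}$, bounding the first term by Theorem~\ref{aposttheo1extdom}, and then absorbing the difference $\na(\tilde{v}_{0}-\tilde{v})$ a second time inside the stress residual $\bnorm{\nu^{-1/2}(\tau+q\,\I-\nu\na\tilde{v}_{0})}_{0,\om}$. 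Using $\norm{\na(\tilde{v}_{0}-\tilde{v})}_{0,\om}=\norm{\na u}_{0,\omega}$ together with $\nu\leq\nu_{+}$, both correction contributions become $\nu_{+}^{1/2}\kappa(\omega,\gamma_{D})\norm{\div\tilde{v}}_{0,\omega}$, and their sum produces the factor $2$ in the final term, exactly as in Theorem~\ref{aposttheo2extdom}.
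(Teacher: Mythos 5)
Your proposal is correct and follows essentially the same route as the paper: the same compactly supported ansatz $\tilde{v}_{0}=\tilde{v}+u$ with $u$ from the bounded-domain stability lemma (Lemma \ref{stablembddom}) extended by zero, the same flux argument over $S_{r}$ using \eqref{pwdecay} to verify the mean-value compatibility $\int_{\omega}\div\tilde{v}=0$ when $\ga_{D}=\ga$, and the same triangle-inequality chain \eqref{apostestcompnonsol} built on Theorem \ref{aposttheo1extdom} producing the factor $2$. No gaps; this matches the paper's proof in both structure and detail.
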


Here the last term on the right hand side is a penalty for possible violation
of the solenoidal condition in $\omega$.

\subsubsection{A Posteriori Estimates for the Pressure Function}

By Lemma \ref{stablemextdom} there exists a vector field $\phi_{p}\in\ho_{-1,\ga_D}(\om)$ with
$\div\phi_{p}=p-\tilde{p}$ 
and $\norm{\na\phi_{p}}_{0,\om}\leq\kappa(\om,\ga_{D})\norm{p-\tilde{p}}_{0,\om}$.
\eqref{spform2extstokes} implies for all $\psi\in v_{D}+\ho_{-1,\ga_{D}}(\om)$
and all $\tau\in\tilde{\d}_{\ga_{N}}(\om)$
\begin{align*}
&\qquad\norm{p-\tilde{p}}_{0,\om}^2
=\scp{p-\tilde{p}}{\div\phi_{p}}_{0,\om}\\
&=\scp{\nu\na(v-\psi)}{\na\phi_{p}}_{0,\om}
-\scp{\Div\tau+F}{\phi_{p}}_{0,\om}
+\scp{\nu\na\psi-\tilde{p}\,\I-\tau}{\na\phi_{p}}_{0,\om}\\
&\leq\Big(\norm{\nu\na(v-\psi)}_{0,\om}
+c_{fp}(\om,\ga_{D})\norm{\Div\tau+F}_{1,\om}
+\norm{\nu\na\psi-\tilde{p}\,\I-\tau}_{0,\om}\Big)
\norm{\na\phi_{p}}_{0,\om},
\end{align*}
where we have used Lemma \ref{fplemextdom} for $\phi_{p}$
and the equation $\div\phi_{p}=\I:\na\phi_{p}$.
Therefore, we obtain
\begin{align*}
\norm{p-\tilde{p}}_{0,\om}
&\leq\kappa(\om,\ga_{D})
\Big(\nu_{+}^{1/2}\norm{\nu^{1/2}\na(v-\psi)}_{0,\om}
+c_{fp}(\om,\ga_{D})\norm{\Div\tau+F}_{1,\om}\\
&\qquad+\nu_{+}^{1/2}\bnorm{\nu^{-1/2}(\tau+\tilde{p}\,\I-\nu\na\psi)}_{0,\om}\Big)
\end{align*}
and by Theorem \ref{aposttheo2extdom} with $\tilde{v}:=\psi$, $q:=\tilde{p}$ we get:

\begin{theo}[a posteriori error estimate IV for exterior domains]
\label{aposttheo4extdom}
Let $\tilde{p}\in\lt(\om)$.
Then for all $\tau\in\tilde{\d}_{\ga_{N}}(\om)$ and all $\psi\in v_{D}+\ho_{-1,\ga_{D}}(\om)$ it holds
\begin{align*}
\norm{p-\tilde{p}}_{0,\om}
&\leq\kappa(\om,\ga_{D})
\Big((\nu_{-}^{-1/2}\nu_{+}^{1/2}+1)c_{fp}(\om,\ga_{D})\norm{\Div\tau+F}_{1,\om}\\
&\qquad+2\nu_{+}^{1/2}\bnorm{\nu^{-1/2}(\tau+\tilde{p}\,\I-\nu\na\psi)}_{0,\om}
+2\nu_{+}\kappa(\om,\ga_{D})\norm{\div\psi}_{0,\om}\Big).
\end{align*}
\end{theo}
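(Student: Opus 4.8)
The plan is to reduce the pressure error to the already-controlled velocity error by a duality argument built on the stability lemma for exterior domains. First I would invoke Lemma \ref{stablemextdom} with the right-hand side $p-\tilde{p}\in\lt(\om)$ to produce a test field $\phi_{p}\in\ho_{-1,\ga_{D}}(\om)$ satisfying $\div\phi_{p}=p-\tilde{p}$ together with the stability bound $\norm{\na\phi_{p}}_{0,\om}\leq\kappa(\om,\ga_{D})\norm{p-\tilde{p}}_{0,\om}$. Since no mean value constraint is imposed in the exterior setting, this step is unconditional and is exactly the point where the constant $\kappa(\om,\ga_{D})$ enters.

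Next I would write $\norm{p-\tilde{p}}_{0,\om}^{2}=\scp{p-\tilde{p}}{\div\phi_{p}}_{0,\om}$ and use the weak formulation \eqref{spform2extstokes} with the test function $\phi_{p}$ to replace $\scp{p}{\div\phi_{p}}_{0,\om}$ by $\scp{\nu\na v}{\na\phi_{p}}_{0,\om}-\scp{F}{\phi_{p}}_{0,\om}$. I would then insert an arbitrary $\psi\in v_{D}+\ho_{-1,\ga_{D}}(\om)$ and an arbitrary $\tau\in\tilde{\d}_{\ga_{N}}(\om)$, splitting $\na v=\na(v-\psi)+\na\psi$ and adding and subtracting $\scp{\tau}{\na\phi_{p}}_{0,\om}$; the integration by parts formula \eqref{partinttildeD} converts the latter into $-\scp{\Div\tau}{\phi_{p}}_{0,\om}$, which combines with the load term. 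Using also $\scp{\tilde{p}}{\div\phi_{p}}_{0,\om}=\scp{\tilde{p}\,\I}{\na\phi_{p}}_{0,\om}$, this yields the identity
\begin{align*}
\norm{p-\tilde{p}}_{0,\om}^{2}
=\scp{\nu\na(v-\psi)}{\na\phi_{p}}_{0,\om}
-\scp{\Div\tau+F}{\phi_{p}}_{0,\om}
+\scp{\nu\na\psi-\tilde{p}\,\I-\tau}{\na\phi_{p}}_{0,\om}.
\end{align*}

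Then I would estimate each term by Cauchy--Schwarz, pairing the load residual in the weighted duality as $\scp{\Div\tau+F}{\phi_{p}}_{0,\om}\leq\norm{\Div\tau+F}_{1,\om}\norm{\phi_{p}}_{-1,\om}$ and absorbing $\norm{\phi_{p}}_{-1,\om}$ through the Friedrichs/Poincar\'e estimate of Lemma \ref{fplemextdom}. Factoring out $\norm{\na\phi_{p}}_{0,\om}$, dividing by $\norm{p-\tilde{p}}_{0,\om}$, and applying the stability bound from the first step eliminates $\phi_{p}$ and leaves a bound by $\kappa(\om,\ga_{D})$ times the sum of the velocity error $\norm{\nu\na(v-\psi)}_{0,\om}$, the equilibration residual $\norm{\Div\tau+F}_{1,\om}$, and the constitutive residual $\norm{\tau+\tilde{p}\,\I-\nu\na\psi}_{0,\om}$. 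Rewriting these in $\nu^{1/2}$-weighted form via $\nu_{-}\leq\nu\leq\nu_{+}$, and finally inserting Theorem \ref{aposttheo2extdom} with $\tilde{v}:=\psi$ and $q:=\tilde{p}$ to control $\norm{\nu^{1/2}\na(v-\psi)}_{0,\om}$, produces the asserted estimate after collecting the $c_{fp}(\om,\ga_{D})$ and $\kappa(\om,\ga_{D})$ factors. I expect the only delicate point to be ensuring that all pairings are well defined in the weighted spaces: this is precisely why $\tau\in\tilde{\d}_{\ga_{N}}(\om)$ and $F\in\lt_{1}(\om)$ are required, so that \eqref{partinttildeD} holds with no boundary contribution on $\ga_{N}$ and the $\lt_{1}$--$\lt_{-1}$ duality remains finite.
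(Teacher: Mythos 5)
Your proposal is correct and follows essentially the same route as the paper: the same construction of $\phi_{p}$ via Lemma \ref{stablemextdom}, the same duality identity obtained from \eqref{spform2extstokes} and \eqref{partinttildeD}, the same Cauchy--Schwarz and Friedrichs/Poincar\'e estimation, and the same final insertion of Theorem \ref{aposttheo2extdom} with $\tilde{v}:=\psi$, $q:=\tilde{p}$. No gaps.
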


The upper bound consists of the same terms as the upper bound 
of Theorem \ref{aposttheo2extdom} and vanishes if
$\psi=v$, $\tau=\sigma$, $\tilde{p}=p$.
However, in this case, the quantity (error measure) on the left hand side depends, e.g., 
on the stability constant $\kappa(\om,\ga_{D})$.

\subsubsection{A Posteriori Estimates for Non-Conforming Approximations}

Let us now assume that we have a very non-conforming approximation
of the strain tensor field 
$$T:=\na v,\qquad
v=v_{D}+\hat{v}\in v_{D}+\s_{-1,\ga_{D}}(\om)\subset\s_{-1}(\om),$$ 
given just by some $\tilde{T}\in\lt(\om)$.
An example could be a broken gradient tensor field as output
of some discontinous Galerkin method.
By the triangle inequality, i.e.,
$$\norm{\nu^{1/2}(T-\tilde{T})}_{0,\om}
\leq\norm{\nu^{1/2}\na(v-\psi)}_{0,\om}
+\norm{\nu^{1/2}(\na\psi-\tilde{T})}_{0,\om},$$
and Theorem \ref{aposttheo2extdom} ($\tilde{v}=\psi$),
and Theorem \ref{aposttheo4extdom} (and again triangle inequality)
we obtain the following result:

\begin{theo}[a posteriori error estimate V for exterior domains]
\label{aposttheo5extdom}
Let $\tilde{T}\in\lt(\om)$ and $\tilde{p}\in\lt(\om)$.
Then for all $\psi\in v_{D}+\ho_{-1,\ga_{D}}(\om)$,
all $\tau\in\tilde{\d}_{\ga_{N}}(\om)$, and all $q\in\lt(\om)$ it holds
\begin{align*}
\norm{\nu^{1/2}(T-\tilde{T})}_{0,\om}
&\leq\nu_{-}^{-1/2}c_{fp}(\om,\ga_{D})\norm{\Div\tau+F}_{1,\om}
+\bnorm{\nu^{-1/2}(\tau+q\,\I-\nu\na\psi)}_{0,\om}\\
&\qquad+2\nu_{+}^{1/2}\kappa(\om,\ga_{D})\norm{\div\psi}_{0,\om}
+\norm{\nu^{1/2}(\na\psi-\tilde{T})}_{0,\om}\\
&\leq\nu_{-}^{-1/2}c_{fp}(\om,\ga_{D})\norm{\Div\tau+F}_{1,\om}
+\bnorm{\nu^{-1/2}(\tau+q\,\I-\nu\tilde{T})}_{0,\om}\\
&\qquad+2\nu_{+}^{1/2}\kappa(\om,\ga_{D})\norm{\div\psi}_{0,\om}
+2\norm{\nu^{1/2}(\na\psi-\tilde{T})}_{0,\om},\\
\norm{p-\tilde{p}}_{0,\om}
&\leq\kappa(\om,\ga_{D})
\Big((\nu_{-}^{-1/2}\nu_{+}^{1/2}+1)c_{fp}(\om,\ga_{D})\norm{\Div\tau+F}_{1,\om}\\
&\qquad+2\nu_{+}^{1/2}\bnorm{\nu^{-1/2}(\tau+\tilde{p}\,\I-\nu\tilde{T})}_{0,\om}
+2\nu_{+}\kappa(\om,\ga_{D})\norm{\div\psi}_{0,\om}\\
&\qquad\qquad+2\nu_{+}^{1/2}\norm{\nu^{1/2}(\na\psi-\tilde{T})}_{0,\om}\Big).
\end{align*}
\end{theo}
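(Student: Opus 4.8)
The plan is to reduce the statement entirely to the two previously established a posteriori estimates, Theorem \ref{aposttheo2extdom} and Theorem \ref{aposttheo4extdom}, by inserting the conforming field $\na\psi$ as an intermediary and then invoking the triangle inequality. The essential point is that $\tilde{T}$ is merely an $\lt(\om)$-tensor and need not be a gradient, whereas both earlier theorems are stated for honest gradients of $\ho_{-1,\ga_{D}}(\om)$-fields; the auxiliary $\psi\in v_{D}+\ho_{-1,\ga_{D}}(\om)$ supplies exactly such a gradient $\na\psi$ to bridge between the exact solution and the non-conforming $\tilde{T}$.

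For the strain part I would first split
$$\norm{\nu^{1/2}(T-\tilde{T})}_{0,\om}
\leq\norm{\nu^{1/2}\na(v-\psi)}_{0,\om}
+\norm{\nu^{1/2}(\na\psi-\tilde{T})}_{0,\om}$$
and bound the first summand by Theorem \ref{aposttheo2extdom} applied with $\tilde{v}=\psi$; this directly yields the first displayed inequality, whose three leading terms are inherited verbatim from that theorem. To obtain the second, fully computable bound I would apply one further triangle inequality inside the weighted stress term, replacing $\na\psi$ by $\tilde{T}$:
$$\bnorm{\nu^{-1/2}(\tau+q\,\I-\nu\na\psi)}_{0,\om}
\leq\bnorm{\nu^{-1/2}(\tau+q\,\I-\nu\tilde{T})}_{0,\om}
+\norm{\nu^{1/2}(\na\psi-\tilde{T})}_{0,\om},$$
using the splitting $\tau+q\,\I-\nu\na\psi=(\tau+q\,\I-\nu\tilde{T})+\nu(\tilde{T}-\na\psi)$ together with $\norm{\nu^{-1/2}\nu(\tilde{T}-\na\psi)}_{0,\om}=\norm{\nu^{1/2}(\na\psi-\tilde{T})}_{0,\om}$. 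Adding this to the leftover $\na\psi$--$\tilde{T}$ contribution from the first split produces the factor $2$ in front of $\norm{\nu^{1/2}(\na\psi-\tilde{T})}_{0,\om}$.

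The pressure estimate is entirely analogous. I would feed $\psi$, $\tau$, and $\tilde{p}$ into Theorem \ref{aposttheo4extdom} and then perform the same substitution of $\na\psi$ by $\tilde{T}$ in the term $\bnorm{\nu^{-1/2}(\tau+\tilde{p}\,\I-\nu\na\psi)}_{0,\om}$, which introduces the extra contribution $\norm{\nu^{1/2}(\na\psi-\tilde{T})}_{0,\om}$, already carrying the prefactor $2\nu_{+}^{1/2}$ supplied by that theorem. Collecting terms reproduces the claimed bound for $\norm{p-\tilde{p}}_{0,\om}$.

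I do not expect a genuine obstacle here: the whole argument is a short chain of triangle inequalities layered on top of the two ready-made theorems. The only care required is the bookkeeping of the viscosity weights $\nu^{\pm1/2}$ — so that the identity $\nu^{-1/2}\nu=\nu^{1/2}$ (valid also in the tensorial case by the footnote estimates) converts a residual \emph{stress} mismatch into an energy-type \emph{gradient} mismatch — and the polynomially weighted $\lt_{1}$/$\lt_{-1}$ duality hidden in the $\norm{\Div\tau+F}_{1,\om}$ term, both of which are routine given the preparations in Section \ref{secprelim}.
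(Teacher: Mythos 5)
Your proposal is correct and coincides with the paper's own argument: the paper likewise inserts $\na\psi$ via the triangle inequality, invokes Theorem \ref{aposttheo2extdom} with $\tilde{v}=\psi$ for the strain bound and Theorem \ref{aposttheo4extdom} for the pressure, and then applies one more triangle inequality to swap $\nu\na\psi$ for $\nu\tilde{T}$ in the stress residual, producing the factors of $2$ exactly as you describe. The bookkeeping with $\nu^{-1/2}\nu=\nu^{1/2}$ is also how the paper's constants arise, so there is nothing to add.
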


For $\tilde{T}=\na\tilde{v}$, $\psi=\tilde{v}\in v_{D}+\ho_{-1,\ga_{D}}(\om)$
we get back Theorem \ref{aposttheo2extdom} and Theorem \ref{aposttheo4extdom}.
Let us investigate the latter summands a bit closer and identify them
in terms of parts of the error.
For this, we use the well known (row wise) Helmholtz decomposition,
see \eqref{helmmixedbc} of the Appendix, and decompose\footnote{Note that
the decomposition is orthogonal with respect to the weighted
$\scp{\nu\,\cdot\,}{\,\cdot\,}_{0,\om}$-inner product.} 
the error according to
$$T-\tilde{T}=\na w+\tilde{T}_{0}\in\na\ho_{-1,\ga_{D}}(\om)\oplus_{0,\nu}\nu^{-1}{}_{0}\d_{\ga_{N}}(\om).$$
Note that due to orthogonality
\begin{align}
\label{TtTortho}
\bnorm{\nu^{1/2}(T-\tilde{T})}_{0,\om}^2
=\norm{\nu^{1/2}\na w}_{0,\om}^2
+\norm{\nu^{1/2}\tilde{T}_{0}}_{0,\om}^2
\end{align}
and that $T-\na v_{D}=\na\hat{v}\in\na\s_{-1,\ga_{D}}(\om)\subset\na\ho_{-1,\ga_{D}}(\om)$ 
already belongs to the first space. 
Hence the latter decomposition is actually a decomposition of $\na v_{D}-\tilde{T}$,
more precisely $\na v_{D}-\tilde{T}=T-\na\hat{v}-\tilde{T}=\na(w-\hat{v})+\tilde{T}_{0}$.
For the second error part $\tilde{T}_{0}$ we observe 
for all $\phi\in\ho_{-1,\ga_{D}}(\om)$ by orthogonality
$$\norm{\nu^{1/2}\tilde{T}_{0}}_{0,\om}^2
=\scp{T-\tilde{T}}{\nu\tilde{T}_{0}}_{0,\om}
=\bscp{\na(v_{D}+\phi)-\tilde{T}}{\nu\tilde{T}_{0}}_{0,\om}$$
and thus 
$\norm{\nu^{1/2}\tilde{T}_{0}}_{0,\om}
\leq\bnorm{\nu^{1/2}(\na(v_{D}+\phi)-\tilde{T})}_{0,\om}$.
In other words,
\begin{align}
\label{upbdTz}
\begin{split}
\norm{\nu^{1/2}\tilde{T}_{0}}_{0,\om}
&=\min_{\phi\in\ho_{-1,\ga_{D}}(\om)}\bnorm{\nu^{1/2}(\na(v_{D}+\phi)-\tilde{T})}_{0,\om}\\
&=\min_{\psi\in v_{D}+\ho_{-1,\ga_{D}}(\om)}\bnorm{\nu^{1/2}(\na\psi-\tilde{T})}_{0,\om}
\end{split}
\end{align}
and the minima are attained at 
$$\hat{\phi}=\hat{v}-w\in\ho_{-1,\ga_{D}}(\om),\qquad
\hat{\psi}=v_{D}+\hat{\phi}=v-w\in v_{D}+\ho_{-1,\ga_{D}}(\om),$$
as $\na\hat{\psi}-\tilde{T}=T-\na w-\tilde{T}=\tilde{T}_{0}$.
Therefore, the minima of the last terms on the right hand sides in Theorem \ref{aposttheo5extdom} 
equal the error part $\norm{\nu^{1/2}\tilde{T}_{0}}_{0,\om}$.

\subsubsection{A Posteriori Estimates for the Stress Tensor Field}

Error estimates for the stress tensor field follow immediately 
from the above derived estimates for the velocity vector field and the pressure function.
Indeed, let $\tilde{\sigma}\in\lt(\om)$ 
be an approximation of the exact stress tensor $\sigma=\nu\na v-p\,\I=\nu T-p\,\I$. 
Moreover, let $\tilde{T}\in\lt(\om)$ and $\tilde{p}\in\lt(\om)$. 
Then, the respective error is simply subject to the triangle inequality
\begin{align*}
\norm{\tilde{\sigma}-\sigma}_{0,\om}
\leq\norm{\tilde{\sigma}-\nu\tilde{T}+\tilde{p}\,\I}_{0,\om}
+\nu_{+}^{1/2}\norm{\nu^{1/2}(T-\tilde{T})}_{0,\om}
+d^{1/2}\norm{p-\tilde{p}}_{0,\om},
\end{align*}
where we can also put $\tilde{T}=\na\tilde{v}$, $\tilde{v}\in v_{D}+\ho_{-1,\ga_{D}}(\om)$.
The first term on the right hand side contains only known tensor fields
and the second and third ones are estimated by, e.g.,
Theorem \ref{aposttheo2extdom}, Theorem \ref{aposttheo4extdom}, and Theorem \ref{aposttheo5extdom}.

\subsubsection{Lower Bounds for the Velocity Field}

Let $\tilde{v}\in v_{D}+\ho_{-1,\ga_{D}}(\om)$, i.e.,
$v-\tilde{v}\in\ho_{-1,\ga_{D}}(\om)$.
Obviously, (as the subsequent $\max$-property holds for any Hilbert\footnote{In any
Hilbert space $\mathsf{H}$ it holds 
$|x|^2=\max_{y\in\mathsf{H}}\big(2\scp{x}{y}-|y|^2\big)$.
Here $\mathsf{H}=\na\ho_{-1,\ga_{D}}(\om)$.} space) 
we have by \eqref{spform2extstokes}
\begin{align*}
\norm{\nu^{1/2}\na(v-\tilde{v})}_{0,\om}^2
&=\max_{\phi\in\ho_{-1,\ga_{D}}(\om)}
\big(2\bscp{\nu\na(v-\tilde{v})}{\na\phi}_{0,\om}
-\norm{\nu^{1/2}\na\phi}_{0,\om}^2\big)\\
&\geq2\scp{\nu\na v}{\na\phi}_{0,\om}
-2\scp{\nu\na\tilde{v}}{\na\phi}_{0,\om}
-\norm{\nu^{1/2}\na\phi}_{0,\om}^2\\
&=2\scp{F}{\phi}_{0,\om}
+2\scp{q}{\div\phi}_{0,\om}
-\bscp{\nu\na(2\tilde{v}+\phi)}{\na\phi}_{0,\om}\\
&\qquad+2\scp{p-q}{\div\phi}_{0,\om}
\end{align*}
and the maximum is attained at $\phi=v-\tilde{v}\in\ho_{-1,\ga_{D}}(\om)$.
The last term can simply and roughly be estimated by Theorem \ref{aposttheo4extdom} ($\tilde{p}=q$)
showing the following result.

\begin{theo}[a posteriori error estimate VI for exterior domains]
\label{aposttheo6extdom}
Let $\tilde{v}\in v_{D}+\ho_{-1,\ga_{D}}(\om)$.
Then for all $\phi\in\ho_{-1,\ga_{D}}(\om)$, all $\tau\in\tilde{\d}_{\ga_{N}}(\om)$,
all $\psi\in v_{D}+\ho_{-1,\ga_{D}}(\om)$, and all $q\in\lt(\om)$
\begin{align*}
&\qquad\norm{\nu^{1/2}\na(v-\tilde{v})}_{0,\om}^2\\
&\geq2\scp{F}{\phi}_{0,\om}
+2\scp{q}{\div\phi}_{0,\om}
-\bscp{\nu\na(2\tilde{v}+\phi)}{\na\phi}_{0,\om}\\
&\qquad-2\kappa(\om,\ga_{D})\norm{\div\phi}_{0,\om}
\Big((\nu_{-}^{-1/2}\nu_{+}^{1/2}+1)c_{fp}(\om,\ga_{D})\norm{\Div\tau+F}_{1,\om}\\
&\qquad\qquad+2\nu_{+}^{1/2}\bnorm{\nu^{-1/2}(\tau+q\,\I-\nu\na\psi)}_{0,\om}
+2\nu_{+}\kappa(\om,\ga_{D})\norm{\div\psi}_{0,\om}\Big).
\end{align*}
In particular, $\psi=\tilde{v}$ is possible.
\end{theo}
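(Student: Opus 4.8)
The plan is to exploit the elementary Hilbert space identity $|x|^2=\max_{y\in\mathsf{H}}\big(2\scp{x}{y}-|y|^2\big)$, valid in any Hilbert space $\mathsf{H}$ and recorded in the footnote, here with $\mathsf{H}=\na\ho_{-1,\ga_{D}}(\om)$ endowed with the weighted inner product $\scp{\nu\,\cdot}{\cdot}_{0,\om}$. Applied to $x=\na(v-\tilde{v})$, which lies in $\mathsf{H}$ since $v-\tilde{v}\in\ho_{-1,\ga_{D}}(\om)$, it writes the squared error as a maximum over test gradients $\na\phi$, $\phi\in\ho_{-1,\ga_{D}}(\om)$, the maximizer being $\phi=v-\tilde{v}$. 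Discarding the maximum yields, for every fixed such $\phi$, the lower bound $\norm{\nu^{1/2}\na(v-\tilde{v})}_{0,\om}^2\geq 2\bscp{\nu\na(v-\tilde{v})}{\na\phi}_{0,\om}-\norm{\nu^{1/2}\na\phi}_{0,\om}^2$.

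Next I would eliminate the inaccessible term $\scp{\nu\na v}{\na\phi}_{0,\om}$ through the weak formulation \eqref{spform2extstokes}, which gives $\scp{\nu\na v}{\na\phi}_{0,\om}=\scp{F}{\phi}_{0,\om}+\scp{p}{\div\phi}_{0,\om}$. Since the exact pressure $p$ is unknown, I would split it against a free $q\in\lt(\om)$ via $\scp{p}{\div\phi}_{0,\om}=\scp{q}{\div\phi}_{0,\om}+\scp{p-q}{\div\phi}_{0,\om}$, and collect the remaining $\tilde{v}$- and $\phi$-terms using the identity $2\scp{\nu\na\tilde{v}}{\na\phi}_{0,\om}+\norm{\nu^{1/2}\na\phi}_{0,\om}^2=\bscp{\nu\na(2\tilde{v}+\phi)}{\na\phi}_{0,\om}$. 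This produces all the fully computable summands of the asserted inequality plus a single residual term $2\scp{p-q}{\div\phi}_{0,\om}$ still carrying the unknown pressure.

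The only step requiring care is bounding this residual from below. The key idea is to apply Cauchy--Schwarz, $2\scp{p-q}{\div\phi}_{0,\om}\geq-2\norm{p-q}_{0,\om}\norm{\div\phi}_{0,\om}$, and then to insert the already-proven a posteriori pressure bound, Theorem \ref{aposttheo4extdom}, with $\tilde{p}=q$ and an arbitrary $\psi\in v_{D}+\ho_{-1,\ga_{D}}(\om)$, as the upper estimate for $\norm{p-q}_{0,\om}$. Substituting reproduces exactly the negative correction term in the statement, with the coupling of $q$, $\tau$ and $\psi$ inherited verbatim from Theorem \ref{aposttheo4extdom}; in particular the choice $\psi=\tilde{v}$ is admissible. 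I expect no analytic obstacle here: once Theorem \ref{aposttheo4extdom} is in hand the argument is pure bookkeeping, the only thing to watch being that the four free parameters $\phi$, $q$, $\tau$, $\psi$ are threaded consistently through the residual estimate so that the final right-hand side matches the claimed form.
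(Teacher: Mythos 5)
Your proposal is correct and follows essentially the same route as the paper: the Hilbert-space max-identity on $\na\ho_{-1,\ga_{D}}(\om)$ with the $\nu$-weighted inner product, substitution of the weak formulation \eqref{spform2extstokes}, splitting of the unknown pressure against a free $q$, regrouping into $\bscp{\nu\na(2\tilde{v}+\phi)}{\na\phi}_{0,\om}$, and finally Cauchy--Schwarz plus Theorem \ref{aposttheo4extdom} (with $\tilde{p}=q$) to bound the residual $2\scp{p-q}{\div\phi}_{0,\om}$ from below. This is exactly the paper's argument, including the observation that $\psi=\tilde{v}$ is admissible.
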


For solenoidal $\phi$, i.e., $\phi\in\s_{-1,\ga_{D}}(\om)$ we simply get
$$\norm{\nu^{1/2}\na(v-\tilde{v})}_{0,\om}^2
\geq2\scp{F}{\phi}_{0,\om}
-\bscp{\nu\na(2\tilde{v}+\phi)}{\na\phi}_{0,\om}$$
and equality holds for $\phi=v-\tilde{v}$, provided that
the approximation $\tilde{v}$ is also solenoidal, i.e.,
$\tilde{v}\in v_{D}+\s_{-1,\ga_{D}}(\om)$.
To handle a very non-conforming approximation $\tilde{T}\in\lt(\om)$
we can simply utilize for all $\varphi\in v_{D}+\ho_{-1,\ga_{D}}(\om)$
the triangle inequality
$$\norm{\nu^{1/2}(\na v-\tilde{T})}_{0,\om}
\geq\norm{\nu^{1/2}\na(v-\varphi)}_{0,\om}
-\norm{\nu^{1/2}(\na\varphi-\tilde{T})}_{0,\om}$$
in combination with Theorem \ref{aposttheo6extdom} ($\tilde{v}=\varphi$).
More precisely, we note the following result:

\begin{theo}[a posteriori error estimate VII for exterior domains]
\label{aposttheo7extdom}
Let $\tilde{T}\in\lt(\om)$.
Then for all $\phi\in\ho_{-1,\ga_{D}}(\om)$, all $\tau\in\tilde{\d}_{\ga_{N}}(\om)$,
all $\varphi,\psi\in v_{D}+\ho_{-1,\ga_{D}}(\om)$, and all $q\in\lt(\om)$
\begin{align*}
\norm{\nu^{1/2}(\na v-\tilde{T})}_{0,\om}
&\geq\norm{\nu^{1/2}\na(v-\varphi)}_{0,\om}
-\norm{\nu^{1/2}(\na\varphi-\tilde{T})}_{0,\om},\\
\norm{\nu^{1/2}\na(v-\varphi)}_{0,\om}^2
&\geq2\scp{F}{\phi}_{0,\om}
+2\scp{q}{\div\phi}_{0,\om}
-\bscp{\nu\na(2\varphi+\phi)}{\na\phi}_{0,\om}\\
&\quad-2\kappa(\om,\ga_{D})\norm{\div\phi}_{0,\om}
\Big((\nu_{-}^{-1/2}\nu_{+}^{1/2}+1)c_{fp}(\om,\ga_{D})\norm{\Div\tau+F}_{1,\om}\\
&\qquad+2\nu_{+}^{1/2}\bnorm{\nu^{-1/2}(\tau+q\,\I-\nu\na\psi)}_{0,\om}
+2\nu_{+}\kappa(\om,\ga_{D})\norm{\div\psi}_{0,\om}\Big).
\end{align*}
\end{theo}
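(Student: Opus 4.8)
The plan is to obtain both inequalities of Theorem \ref{aposttheo7extdom} without establishing any genuinely new estimate: the first line is a pure triangle inequality in the space $\lt(\om)$ equipped with the weighted inner product $\scp{\nu\,\cdot\,}{\,\cdot\,}_{0,\om}$, and the second line is precisely Theorem \ref{aposttheo6extdom} applied with the admissible field $\tilde{v}$ chosen to be $\varphi$. Thus the proof is essentially a repackaging of results already in hand.

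For the first inequality I would begin from the algebraic splitting
$$\na(v-\varphi)=(\na v-\tilde{T})+(\tilde{T}-\na\varphi),$$
which is an identity in $\lt(\om)$: indeed $\na v=T\in\lt(\om)$, $\tilde{T}\in\lt(\om)$ by hypothesis, and $\na\varphi\in\lt(\om)$ because $\varphi\in v_{D}+\ho_{-1,\ga_{D}}(\om)$ with $\na v_{D}\in\lt(\om)$ (as $v_{D}\in\s_{-1}(\om)\subset\ho_{-1}(\om)$). Applying the subadditivity of the seminorm $\norm{\nu^{1/2}\,\cdot\,}_{0,\om}$ to this splitting yields
$$\norm{\nu^{1/2}\na(v-\varphi)}_{0,\om}
\leq\norm{\nu^{1/2}(\na v-\tilde{T})}_{0,\om}
+\norm{\nu^{1/2}(\tilde{T}-\na\varphi)}_{0,\om},$$
and rearranging, together with the symmetry $\norm{\nu^{1/2}(\tilde{T}-\na\varphi)}_{0,\om}=\norm{\nu^{1/2}(\na\varphi-\tilde{T})}_{0,\om}$, gives exactly the asserted lower bound for $\norm{\nu^{1/2}(\na v-\tilde{T})}_{0,\om}$.

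For the second inequality I would simply invoke Theorem \ref{aposttheo6extdom}. Since $\varphi\in v_{D}+\ho_{-1,\ga_{D}}(\om)$ meets exactly the membership required there for the field $\tilde{v}$, the substitution $\tilde{v}=\varphi$ is legitimate, and the displayed lower bound for $\norm{\nu^{1/2}\na(v-\tilde{v})}_{0,\om}^{2}$ becomes verbatim the claimed lower bound for $\norm{\nu^{1/2}\na(v-\varphi)}_{0,\om}^{2}$, with the same free parameters $\phi\in\ho_{-1,\ga_{D}}(\om)$, $\tau\in\tilde{\d}_{\ga_{N}}(\om)$, $\psi\in v_{D}+\ho_{-1,\ga_{D}}(\om)$, and $q\in\lt(\om)$.

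There is, honestly, no real obstacle in this theorem: all the substantive analysis already resides in the chain leading to Theorem \ref{aposttheo6extdom}, which fuses the Hilbert-space max-representation of the squared error with the pressure bound of Theorem \ref{aposttheo4extdom} and, through it, the non-solenoidal velocity estimate of Theorem \ref{aposttheo2extdom} and the stability lemma, Lemma \ref{stablemextdom}. The only point deserving a moment's care is the integrability bookkeeping in the first inequality, namely verifying that each of the three tensor fields $\na v$, $\tilde{T}$, $\na\varphi$ lies in $\lt(\om)$ so that the weighted norm and its triangle inequality apply; but this is immediate from the stated regularity of $v$, $\tilde{T}$, and $\varphi$.
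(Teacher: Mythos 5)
Your proposal is correct and coincides with the paper's own argument: the paper derives Theorem \ref{aposttheo7extdom} exactly by the triangle inequality $\norm{\nu^{1/2}(\na v-\tilde{T})}_{0,\om}\geq\norm{\nu^{1/2}\na(v-\varphi)}_{0,\om}-\norm{\nu^{1/2}(\na\varphi-\tilde{T})}_{0,\om}$ combined with Theorem \ref{aposttheo6extdom} applied with $\tilde{v}=\varphi$. Your additional check that $\na v$, $\tilde{T}$, and $\na\varphi$ all lie in $\lt(\om)$ is a harmless bit of extra care, not a deviation.
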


\subsection{Applications for 2D Exterior Domains}
\label{2Dsec}

For a Lipschitz domain $\calD\subset\reals^{2}$ we introduce modified polynomially weighted spaces using logarithms by
\begin{align*}
\lt_{\pm1,\ln}(\calD)
&:=\setb{\phi\in\lt_{\mathsf{loc}}(\calD)}{\big(\rho\ln(e+\rho)\big)^{\pm1}\phi\in\lt(\calD)},\qquad
e:\text{Euler's number},\\
\ho_{-1,\ln}(\calD)
&:=\setb{\phi\in\lt_{-1,\ln}(\calD)}{\na\phi\in\lt(\calD)}.
\end{align*}
Note that at infinity $\big(\rho\ln(e+\rho)\big)^{\pm1}$ behaves like $(r\ln r)^{\pm1}$.
The Inner product in $\lt_{\pm1,\ln}(\calD)$ is defined and denoted by
$\scp{\,\cdot\,}{\,\cdot\,}_{\pm1,\ln,\calD}
:=\bscp{\big(\rho\ln(e+\rho)\big)^{\pm2}\,\cdot\,}{\,\cdot\,}_{0,\calD}$. 
All other weighted spaces and norms etc. are modified and defined in the same way.

Let $\om\subset\reals^{2}$ and $\omega\subset\reals^{2}$ be defined as in Section \ref{secprelim}, i.e., 
$\om\subset\reals^{2}$ is an exterior Lipschitz domain.
The situation is now different from the case $d\geq3$ as the constants will be integrable in our weighted spaces.
More precisely, for $0<\epsilon<1$
\begin{align*}
(r\ln r)^{-1}&\in\lt(B_{\epsilon}),
&(r\ln r)^{-1}&\not\in\lt(B_{1+\epsilon}\setminus\overline{B}_{1-\epsilon}),\\
(r\ln r)^{-1}&\in\lt(\reals^{2}\setminus\overline{B}_{e}).
\end{align*}
Introducing
$$\ho_{-1,\ln,\emptyset}(\om)
:=\ho_{-1,\ln}(\om)\cap\reals^{\bot_{-1,\ln,\om}}$$
we have the following Friedrichs/Poincare estimate:

\begin{lem}[Friedrichs/Poincar\'e estimate for 2D exterior domains]
\label{ptheoextdom2D}
There exists $c>0$ such that 
$\norm{v}_{-1,\ln,\om}
\leq c\norm{\na v}_{0,\om}$
for all $v\in\ho_{-1,\ln,\ga_{D}}(\om)$.
The best constant $c$ will be denoted by $c_{fp}(\om,\ga_{D})$.
In the special case $B_{e}\subset\reals^{2}\setminus\om$
and $\ga_{D}=\ga$ it holds $c_{fp}(\om,\ga_{D})\leq2$.
\end{lem}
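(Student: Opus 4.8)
The plan is to follow the template of Lemma \ref{fplemextdom}, but to replace the power weight by the logarithmic one and to account for the new difficulty that in two dimensions the constant functions \emph{do} belong to $\lt_{-1,\ln}(\om)$. I would therefore split the argument into a clean direct estimate, which yields the explicit constant in the special geometry, and a cut-off plus compactness argument for the general statement.

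For the special case $B_{e}\subset\reals^{2}\setminus\om$ and $\ga_{D}=\ga$, observe first that since the complement is closed we even have $\overline{B}_{e}\subset\reals^{2}\setminus\om$, so that $\om\subset\{r\geq e\}$ and every $v\in\ci_{\ga}(\om)$ vanishes near $\ga$. On each ray $\theta=\mathrm{const}$ I would test the one-dimensional Hardy inequality with the weight $\phi(r):=1/\ln r$, which satisfies $-\phi'(r)=1/\big(r(\ln r)^{2}\big)$. Integration by parts along the ray gives $\int|v|^{2}(-\phi')\,dr=\int 2vv'\phi\,dr$, the boundary terms vanishing because $v=0$ near $\ga$ (the inner endpoints lie on $\ga$, and the outer part has compact support). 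Cauchy--Schwarz together with $\phi^{2}(\ln r)^{2}=1$ then yields $\int|v|^{2}/\big(r(\ln r)^{2}\big)\,dr\leq 4\int|\partial_{r}v|^{2}r\,dr$. Integrating over $\theta$ and using $|\partial_{r}v|\leq|\na v|$ gives the radial estimate with constant $4$, and since $\rho\ln(e+\rho)\geq r\ln r$ on $\{r\geq e\}$ this upgrades to $\norm{v}_{-1,\ln,\om}\leq 2\norm{\na v}_{0,\om}$ for $v\in\ci_{\ga}(\om)$, hence for all $v\in\ho_{-1,\ln,\ga}(\om)$ by density. This proves $c_{fp}(\om,\ga_{D})\leq 2$.

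For the general statement I would isolate the tail by the cut-off $\eta$ from \eqref{defeta}, enlarging $r_{1}$ if necessary so that $r_{1}\geq e$. For arbitrary $v\in\ho_{-1,\ln}(\om)$ the field $\eta v$ is supported in $\{r\geq r_{1}\}$ and vanishes near $\ga$, so the radial estimate above applies and gives $\norm{\eta v}_{-1,\ln,\om}\leq 2\norm{\na(\eta v)}_{0,\om}$. Writing $\na(\eta v)=\eta\na v+v\na\eta$, estimating $\norm{v\na\eta}_{0,\om}\leq\tfrac{\xi'_{\infty}}{r_{2}-r_{1}}\norm{v}_{0,\omega}$, and bounding the bounded part by $\norm{(1-\eta)v}_{-1,\ln,\om}\leq\norm{v}_{0,\omega}$ (as $\rho\ln(e+\rho)\geq 1$ and $\supp(1-\eta)\subset\overline{B}_{r_{2}}$), I obtain the compact perturbation estimate $\norm{v}_{-1,\ln,\om}\leq 2\norm{\na v}_{0,\om}+\tilde{c}\norm{v}_{0,\omega}$ with $\tilde{c}:=2\xi'_{\infty}/(r_{2}-r_{1})+1$, valid for all $v\in\ho_{-1,\ln}(\om)$.

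Finally I would remove the $\lt(\omega)$-term by the Rellich compactness argument already used in Lemma \ref{fplemextdom}: assuming the claim fails, pick $(v_{n})\subset\ho_{-1,\ln,\ga_{D}}(\om)$ with $\norm{v_{n}}_{-1,\ln,\om}=1$ and $\norm{\na v_{n}}_{0,\om}\to 0$; since the weight is bounded on $\omega$, $(v_{n})$ is bounded in $\ho(\omega)$, so a subsequence converges in $\lt(\omega)$, and the perturbation estimate makes it Cauchy, hence convergent, in $\ho_{-1,\ln}(\om)$ to some $v$ with $\na v=0$, i.e.\ a constant. Here lies the one genuinely new point compared with $d\geq 3$: constants are square integrable against $(\rho\ln(e+\rho))^{-1}$ and are therefore not automatically excluded. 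They are ruled out instead by the defining constraint of the space, if $\ga_{D}\neq\emptyset$ by the trace condition $v|_{\ga_{D}}=0$, and if $\ga_{D}=\emptyset$ by the orthogonality $v\in\reals^{\bot_{-1,\ln,\om}}$ built into $\ho_{-1,\ln,\emptyset}(\om)$; either way $v=0$, contradicting $\norm{v}_{-1,\ln,\om}=1$. I expect the main obstacle to be the careful treatment of the boundary terms in the radial Hardy inequality (ensuring they vanish for the given, possibly non-star-shaped geometry) together with the correct use of the mean value subspace to compensate for the integrability of constants; the remaining steps are routine adaptations of the $d\geq 3$ argument.
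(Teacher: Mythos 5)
Your proof is correct and follows the same skeleton as the paper's: a weighted Friedrichs estimate with constant $2$ for fields vanishing near the inner boundary, a cut-off argument yielding the compact perturbation estimate $\norm{v}_{-1,\ln,\om}\leq 2\norm{\na v}_{0,\om}+\tilde{c}\,\norm{v}_{0,\omega}$, and a Rellich-type contradiction argument in which the limiting constant field is excluded by the trace condition (if $\ga_{D}\neq\emptyset$) or by the mean-value orthogonality built into $\ho_{-1,\ln,\emptyset}(\om)$ (if $\ga_{D}=\emptyset$) --- exactly the point of the paper's remark that boundary or mean value conditions are now needed, in contrast to $d\geq3$. The one place you genuinely diverge is the first step: the paper imports the inequality $\norm{(r\ln r)^{-1}v}_{0,\om}\leq2\norm{\na v}_{0,\om}$ for $v\in\ci_{\ga}(\om)$ from \cite{paulyrepinell,saranenwitschexteriorell}, whereas you prove it from scratch by a one-dimensional Hardy inequality along rays with weight $\phi=1/\ln r$; your computation is correct: $-\phi'=1/\big(r(\ln r)^{2}\big)$, integration by parts has no boundary terms because $\supp v$ is a compact subset of $\om\subset\{r>e\}$, Cauchy--Schwarz with $\phi^{2}(\ln r)^{2}=1$ gives the factor $4$ at the level of squares (hence $2$ for norms), and the comparison $\rho\ln(e+\rho)\geq r\ln r$ on $\{r\geq e\}$ upgrades this to the analogue of \eqref{fcorextdom2Dapp}. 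This buys self-containedness and an explicit justification of the constant $2$ that the paper only cites. One small point of rigor, shared equally with the paper: applying the estimate to $\eta v$ for general $v\in\ho_{-1,\ln}(\om)$ presupposes $\eta v\in\ho_{-1,\ln,\ga}(\supp\eta)$, i.e.\ approximability by smooth compactly supported fields, which both you and the paper assert without proof (it requires, e.g., a logarithmic cut-off at infinity); since the paper takes the same shortcut, this is not a gap relative to its standard of rigor.
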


Note that we need boundary or mean value conditions
as in the case of a bounded domain.

\begin{proof}
From \cite[Appendix 4.2, Lemma 4.1, Corollary 4.2, Remark 4.3]{paulyrepinell},
see also \cite[Lemma 4.1]{saranenwitschexteriorell}, 
we have for all $v\in\ci_{\ga}(\om)$
$$\norm{v}_{-1,\ln,\om}
\leq\norm{(r\ln r)^{-1}v}_{0,\om}
\leq2\norm{\na v}_{0,\om},$$
provided that, e.g., $B_{e}\subset\reals^{2}\setminus\om$,
which extends by density and continuity to $\ho_{-1,\ln,\ga}(\om)$,
i.e., all $v\in\ho_{-1,\ln,\ga}(\om)$,
\begin{align}
\label{fcorextdom2Dapp}
\norm{v}_{-1,\ln,\om}
\leq2\norm{\na v}_{0,\om}.
\end{align}
Let $v\in\ho_{-1,\ln}(\om)$ and 
let us assume w.l.o.g. $r_{1}>e$, $r_{2}:=r_{1}+1$ and $\xi'_{\infty}\leq1$.
Then $\eta v\in\ho_{-1,\ln,\ga}(\supp\eta)$ and 
$\norm{\eta v}_{-1,\ln,\om}
\leq2\norm{\na(\eta v)}_{0,\om}$
by \eqref{fcorextdom2Dapp} for $\om=\supp\eta$. Hence
\begin{align*}
\norm{v}_{-1,\ln,\om}
&\leq2\norm{\na v}_{0,\om}
+2\norm{v\na\eta}_{0,\om}
+\norm{(1-\eta)v}_{-1,\ln,\om}\\
&\leq2\norm{\na v}_{0,\om}
+2\norm{v}_{0,\omega}
+\norm{v}_{0,\omega},
\end{align*}
showing for all $v\in\ho_{-1,\ln}(\om)$
\begin{align}
\label{flemcptpertextdom2Dapp}
\norm{v}_{-1,\ln,\om}
\leq2\norm{\na v}_{0,\om}+3\norm{v}_{0,\omega}.
\end{align}
Now, if the assertion of Lemma \ref{ptheoextdom2D}
is false, there is a sequence $(v_{n})\subset\ho_{-1,\ln,\ga_{D}}(\om)$
with $\norm{v_{n}}_{-1,\ln,\om}=1$ and $\norm{\na v_{n}}_{0,\om}<1/n$. 
Hence, $(v_{n})$ is bounded in $\ho(\omega)$ as well. By Rellich's selection theorem we can assume w.l.o.g.
that $(v_{n})$ already converges in $\lt(\omega)$. 
Thus, by \eqref{flemcptpertextdom2Dapp} $(v_{n})$ is a Cauchy sequence 
in $\lt_{-1,\ln}(\om)$ and hence also in $\ho_{-1,\ln,\ga_{D}}(\om)$. 
Therefore, $(v_{n})$ converges in $\ho_{-1,\ln,\ga_{D}}(\om)$
to some $v\in\ho_{-1,\ln,\ga_{D}}(\om)$ with $\na v=0$. We conclude that $v$ is constant
and hence $v=0$, which implies a contradiction by
$1=\norm{v_{n}}_{-1,\ln,\om}\to0$.
\end{proof}

Now, all results from the sections for $d\geq3$ follow with the obvious modifications,
where we just present the most relevant ones.

\begin{lem}[stability lemma for 2D exterior domains]
\label{stablemextdom2D}
There exists $c>0$ such that for all $f\in\lt(\om)$ 
there is a vector field $v\in\ho_{-1,\ln,\ga_D}(\om)$ with
$$\div v=f\quad\text{and}\quad\norm{\na v}_{0,\om}\leq c\norm{f}_{0,\om}.$$
The best constant is denoted by $\kappa(\om,\ga_{D})$
which equals the norm of the corresponding right inverse $f\mapsto v$. 
Moreover, with $\kappa$ from \eqref{defkappaupperbound}
\begin{align*}
\kappa(\om,\ga_{D})\leq
\hat{\kappa}(\om,\ga_{D})
&:=(1+\kappa)\big(1+c_{fp}(\reals^{2})\frac{\xi'_{\infty}\rho(r_{2})\ln\big(e+\rho(r_{2})\big)}{r_{2}-r_{1}}\big).
\end{align*}
In particular, it holds
$\hat{\kappa}(\om,\ga_{D})\leq(1+\kappa)\big(1+c_{fp}(\reals^{2})\rho(r_{2})\ln(e+\rho(r_{2}))\big)$
for $r_{2}=r_{1}+1$ and $\xi'_{\infty}\leq1$.
If $f$ has compact support in $\overline{\omega}$
and if additionally $\int_{\omega}f=\int_{\om}f=0$ in the case $\ga_{D}=\ga$,
then $v$ can be chosen with compact support in $\overline{\omega}$, 
especially $v\in\ho_{\gamma_D}(\omega)\subset\ho_{\ga_D}(\om)$.
In this case, $\kappa(\om,\ga_{D})\leq\kappa(\omega,\gamma_{D})$.
\end{lem}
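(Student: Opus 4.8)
The plan is to mirror the proof of Lemma~\ref{stablemextdom} for $d\geq3$, replacing the polynomially weighted spaces $\ho_{-1}$ by their logarithmically modified analogues $\ho_{-1,\ln}$ and the constant $c_d$ by $c_{fp}(\reals^{2})$. The one essential new ingredient compared with the higher-dimensional case is that in $\reals^{2}$ the constants are integrable in the logarithmically weighted space, so I cannot simply invoke a $\rot$-$\div$ lemma over the whole space that is free of mean value conditions. The workaround is to keep the right hand side $f$ supported in $\overline{\omega}$ at the crucial step and to transfer the global construction to a problem on the bounded domain $\omega$, exactly as in the $d\geq3$ proof, but now paying attention to the mean value of $f$.

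First I would extend $f$ by zero to $\reals^{2}\setminus\overline{\om}$ and seek a weak solenoidal potential $v$ in the logarithmically weighted space solving $\div v=f$ in $\reals^{2}$, with $\rot v=0$ and the two-dimensional analogue of the estimate~\eqref{navf}, namely $\norm{v}_{-1,\ln,\reals^{2}}\leq c_{fp}(\reals^{2})\norm{\na v}_{0,\reals^{2}}$ together with $\norm{\na v}_{0,\reals^{2}}=\norm{\div v}_{0,\reals^{2}}=\norm{f}_{0,\om}$; this is the $d=2$ counterpart of Lemma~\ref{maxlemextdom}, obtained from the $\rot$-$\div$ identity~\eqref{narotdiv} and the Friedrichs/Poincar\'e estimate of Lemma~\ref{ptheoextdom2D} in the whole-space case $\om=\reals^{2}$. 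Then I would make the same cut-off ansatz $v:=\eta v+v_{\omega}$, where $\eta$ is the function from~\eqref{defeta} and $v_{\omega}$ is the zero-extension to $\om$ of some $u\in\ho_{\gamma_{D}}(\omega)$, and reduce to solving $\div u=g:=(1-\eta)f-\na\eta\cdot v$ on the bounded domain $\omega$.

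The key structural point, again identical to the $d\geq3$ argument, is that $g=\div\big((1-\eta)v\big)$, so that
\begin{align*}
\int_{\omega}g
=\int_{\gamma}(1-\eta)n\cdot v
=\int_{\ga}n\cdot v
=-\int_{\reals^{2}\setminus\overline{\om}}\div v
=-\int_{\reals^{2}\setminus\overline{\om}}f
=0,
\end{align*}
so $g\in\lt_{\gamma_{D}}(\omega)$ regardless of the boundary condition on $\ga_{D}$. Hence Lemma~\ref{stablembddom} applies on $\omega$ and yields $u\in\ho_{\gamma}(\omega)\subset\ho_{\gamma_{D}}(\omega)$ with $\norm{\na u}_{0,\omega}\leq\kappa\norm{g}_{0,\omega}$, where $\kappa=\min\{\kappa(\omega,\gamma_{D}),\kappa(\omega,\gamma)\}$ as in~\eqref{defkappaupperbound}. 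Assembling the three estimates for $\bnorm{\na(\eta v)}_{0,\om}$, $\norm{\na v_{\omega}}_{0,\om}$, and the weighted bound $\norm{v}_{0,B_{r_{2}}\setminus\overline{B}_{r_{1}}}\leq\rho(r_{2})\ln\big(e+\rho(r_{2})\big)\norm{v}_{-1,\ln,\reals^{2}}$ produces the claimed bound $\kappa(\om,\ga_{D})\leq\hat{\kappa}(\om,\ga_{D})$ with the logarithmic factor appearing in the last term, and the simplified bound for $r_{2}=r_{1}+1$, $\xi'_{\infty}\leq1$ follows at once.

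The main obstacle I anticipate is the compact-support addendum at the end of the statement, where $f$ itself has compact support in $\overline{\omega}$. Here I cannot route through the whole-space potential, since that construction only gives a decaying (non-compactly-supported) $v$; instead I would argue directly as in Remark~\ref{stablemextdomrem}, setting $g:=f|_{\omega}\in\lt_{\gamma_{D}}(\omega)$, applying Lemma~\ref{stablembddom} on $\omega$ to obtain $u\in\ho_{\gamma_{D}}(\omega)$ with $\div u=g$, and extending $u$ by zero. The delicate part is the mean value hypothesis: when $\ga_{D}=\ga$ I must check that $g\in\lt_{\bot}(\omega)$, which is exactly guaranteed by the assumed condition $\int_{\omega}f=\int_{\om}f=0$; with this in hand the zero-extension $v$ lies in $\ho_{\gamma_{D}}(\omega)\subset\ho_{\ga_{D}}(\om)$, has compact support in $\overline{\omega}$, and satisfies $\norm{\na v}_{0,\om}=\norm{\na u}_{0,\omega}\leq\kappa(\omega,\gamma_{D})\norm{f}_{0,\om}$, giving $\kappa(\om,\ga_{D})\leq\kappa(\omega,\gamma_{D})$ in this special case.
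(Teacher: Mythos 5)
Your proposal is correct and follows exactly the route the paper intends: the paper gives no separate proof of Lemma \ref{stablemextdom2D} (it declares that the $d\geq3$ results ``follow with the obvious modifications''), and your argument is precisely the proof of Lemma \ref{stablemextdom} transplanted to the logarithmically weighted spaces --- same zero extension, same cut-off ansatz $v=\eta v+v_{\omega}$, same unchanged mean-value computation showing $g\in\lt_{\gamma_{D}}(\omega)$, and the Remark \ref{stablemextdomrem}-style bounded-domain argument for the compactly supported case. The only point worth making explicit is that the whole-space estimate $\norm{v}_{-1,\ln,\reals^{2}}\leq c_{fp}(\reals^{2})\norm{\na v}_{0,\reals^{2}}$ holds only after normalizing the potential $v$ to have vanishing weighted mean (constants belong to $\ho_{-1,\ln}(\reals^{2})$, which is exactly why the paper defines $\ho_{-1,\ln,\emptyset}$), a normalization that is harmless here since adding a constant changes neither $\rot v$, $\div v$, nor $\na v$.
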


\begin{cor}[distance lemma for 2D exterior domains]
\label{distlemextdom2D}
For any $v\in\ho_{-1,\ln,\ga_{D}}(\om)$ there exists 
a solenoidal $v_{0}\in\s_{-1,\ln,\ga_{D}}(\om)$ such that
$$\dist\big(v,\s_{-1,\ln,\ga_{D}}(\om)\big)
=\inf_{\phi\in\s_{-1,\ln,\ga_{D}}(\om)}\bnorm{\na(v-\phi)}_{0,\om}
\leq\bnorm{\na(v-v_{0})}_{0,\om}
\leq\kappa(\om,\ga_{D})\norm{\div v}_{0,\om}.$$
\end{cor}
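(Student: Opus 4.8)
The plan is to mirror \emph{verbatim} the argument used for Corollary \ref{distlemextdom} in the case $d\geq3$, now invoking the two-dimensional stability lemma, Lemma \ref{stablemextdom2D}, and working throughout in the logarithmically weighted spaces. First I would start with an arbitrary $v\in\ho_{-1,\ln,\ga_{D}}(\om)$ and observe that by the very definition of this space $\na v\in\lt(\om)$, so in particular $f:=\div v\in\lt(\om)$ is an admissible right hand side for Lemma \ref{stablemextdom2D}. No mean value condition on $f$ is required, exactly as recorded in the statement of that lemma.

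Next I would apply Lemma \ref{stablemextdom2D} to this $f$ to produce a vector field $\tilde{v}\in\ho_{-1,\ln,\ga_{D}}(\om)$ solving $\div\tilde{v}=\div v$ together with the stability estimate $\norm{\na\tilde{v}}_{0,\om}\leq\kappa(\om,\ga_{D})\norm{\div v}_{0,\om}$. Setting $v_{0}:=v-\tilde{v}$, the difference lies in $\ho_{-1,\ln,\ga_{D}}(\om)$ since both summands do, and it satisfies $\div v_{0}=\div v-\div\tilde{v}=0$; hence $v_{0}\in\s_{-1,\ln,\ga_{D}}(\om)$ is an admissible solenoidal competitor.

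Finally, since $v-v_{0}=\tilde{v}$, I would conclude $\bnorm{\na(v-v_{0})}_{0,\om}=\bnorm{\na\tilde{v}}_{0,\om}\leq\kappa(\om,\ga_{D})\norm{\div v}_{0,\om}$, and the distance bound follows at once because $v_{0}$ is an admissible element in the infimum defining $\dist\big(v,\s_{-1,\ln,\ga_{D}}(\om)\big)$, so that this infimum is dominated by $\bnorm{\na(v-v_{0})}_{0,\om}$.

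There is no genuine obstacle here: the entire analytic content has already been absorbed into Lemma \ref{stablemextdom2D}, and the corollary is a purely formal consequence. The only point worth checking with a little care is that the corrector $\tilde{v}$, and hence $v_{0}$, lands in the correct logarithmically weighted space $\ho_{-1,\ln,\ga_{D}}(\om)$ rather than merely in $\ho_{-1,\ga_{D}}(\om)$; this is guaranteed precisely because Lemma \ref{stablemextdom2D} is stated to deliver its right inverse into $\ho_{-1,\ln,\ga_{D}}(\om)$, which is exactly the feature that distinguishes the two-dimensional situation from the case $d\geq3$.
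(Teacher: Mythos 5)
Your proof is correct and is exactly the argument the paper intends: the paper gives no separate proof for the 2D case, stating that all results follow from the $d\geq3$ versions ``with the obvious modifications,'' and your argument is precisely the proof of Corollary \ref{distlemextdom} with Lemma \ref{stablemextdom} replaced by Lemma \ref{stablemextdom2D} and the weighted spaces replaced by their logarithmically weighted counterparts. Your closing check --- that the corrector $\tilde{v}$ lands in $\ho_{-1,\ln,\ga_{D}}(\om)$ because the 2D stability lemma delivers its right inverse into that space --- is the only point where care is needed, and you handled it correctly.
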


\begin{cor}[inhomogeneous distance lemma for 2D exterior domains]
\label{inhomodistlemextdom2D}
For any vector field $v\in\ho_{-1,\ln}(\om)$ there exists 
a solenoidal $v_{0}\in\s_{-1,\ln}(\om)$ such that
$v_{0}-v\in\ho_{-1,\ln,\ga_{D}}(\om)$, i.e., $v_{0}|_{\ga_{D}}=v|_{\ga_{D}}$, and
$\bnorm{\na(v_{0}-v)}_{0,\om}
\leq\kappa(\om,\ga_{D})\norm{\div v}_{0,\om}$.
\end{cor}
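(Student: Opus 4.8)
The plan is to prove Corollary~\ref{inhomodistlemextdom2D} exactly as the $d\geq3$ analogue (Corollary~\ref{inhomodistlemextdom}) was established, simply replacing the role of Lemma~\ref{stablemextdom} by its two-dimensional counterpart, Lemma~\ref{stablemextdom2D}. The whole argument is a one-line reduction to stability, so there is no genuine obstacle beyond checking that the weighted spaces with logarithmic weights behave as required.

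First I would take an arbitrary $v\in\ho_{-1,\ln}(\om)$ and set $f:=\div v$. Since $v\in\lt(\om)$ we have $\div v\in\lt(\om)$, so $f$ is an admissible right-hand side for the 2D stability lemma. Applying Lemma~\ref{stablemextdom2D} to $f$ produces a vector field $\tilde{v}\in\ho_{-1,\ln,\ga_D}(\om)$ with
$$\div\tilde{v}=f=\div v,\qquad
\norm{\na\tilde{v}}_{0,\om}\leq\kappa(\om,\ga_{D})\norm{\div v}_{0,\om}.$$
Note that, as emphasized in the text preceding Corollary~\ref{inhomodistlemextdom}, no mean value condition on $f$ is needed here because the 2D stability lemma imposes none on $f$ for a general right-hand side in $\lt(\om)$.

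Next I would define $v_{0}:=v-\tilde{v}$. Then $\div v_{0}=\div v-\div\tilde{v}=0$, so $v_{0}\in\s_{-1,\ln}(\om)$, using that $v_{0}\in\ho_{-1,\ln}(\om)$ as the difference of two elements of that space (here $\tilde{v}\in\ho_{-1,\ln,\ga_D}(\om)\subset\ho_{-1,\ln}(\om)$). Moreover $v_{0}-v=-\tilde{v}\in\ho_{-1,\ln,\ga_{D}}(\om)$, which is precisely the assertion that $v_{0}$ and $v$ share the same Dirichlet trace on $\ga_{D}$, i.e., $v_{0}|_{\ga_{D}}=v|_{\ga_{D}}$.

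Finally the estimate follows immediately, since
$$\bnorm{\na(v_{0}-v)}_{0,\om}
=\bnorm{\na\tilde{v}}_{0,\om}
\leq\kappa(\om,\ga_{D})\norm{\div v}_{0,\om},$$
which completes the proof. The only point requiring any care is the implicit verification that the reduction respects the logarithmically weighted function spaces $\ho_{-1,\ln}$ and $\s_{-1,\ln}$ rather than the polynomially weighted ones used for $d\geq3$; but this is automatic, because the output $\tilde{v}$ of Lemma~\ref{stablemextdom2D} already lives in $\ho_{-1,\ln,\ga_D}(\om)$, so all algebraic manipulations take place inside $\ho_{-1,\ln}(\om)$ and no additional integrability or decay hypotheses beyond those built into the 2D stability lemma are required.
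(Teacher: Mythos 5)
Your proof is correct and takes essentially the same route as the paper: the paper establishes the 2D corollaries by repeating the $d\geq3$ argument (here, the one-line proof of Corollary \ref{inhomodistlemextdom}) with Lemma \ref{stablemextdom2D} substituted for Lemma \ref{stablemextdom}, which is precisely your reduction. One minor slip worth fixing: $\div v\in\lt(\om)$ follows because $\na v\in\lt(\om)$ by the definition of $\ho_{-1,\ln}(\om)$, not because ``$v\in\lt(\om)$'' --- $v$ itself only lies in the weighted space $\lt_{-1,\ln}(\om)$.
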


\begin{cor}[inf-sup lemma for 2D exterior domains]
\label{infsuplemextdom2D}
It holds
$$\inf_{f\in\lt(\om)}\sup_{v\in\ho_{-1,\ln,\ga_{D}}(\om)}
\frac{\scp{f}{\div v}_{0,\om}}{\norm{f}_{0,\om}\norm{\na v}_{0,\om}}\geq\frac{1}{\kappa(\om,\ga_{D})}.$$
\end{cor}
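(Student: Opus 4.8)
The plan is to derive the inf-sup lemma as an immediate consequence of the 2D stability lemma, Lemma \ref{stablemextdom2D}, exactly mirroring how Corollary \ref{infsuplembddom} follows from Lemma \ref{stablembddom} in the bounded case and how Corollary \ref{infsuplemextdom} follows from Lemma \ref{stablemextdom} for $d\geq3$. The key observation is that the stability lemma provides, for every right-hand side $f\in\lt(\om)$, a preimage $v$ under the divergence operator with a controlled gradient norm; feeding the choice $\div v=f$ back into the inf-sup quotient turns the bound $\norm{\na v}_{0,\om}\leq\kappa(\om,\ga_{D})\norm{f}_{0,\om}$ directly into a lower bound for the supremum.

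First I would fix an arbitrary $f\in\lt(\om)$. If $f=0$ the inequality is trivial (the supremum is taken over a nonnegative quantity and the infimum is a limit over such $f$, so the bound is vacuous there), so I assume $f\neq0$. By Lemma \ref{stablemextdom2D} there exists a vector field $v\in\ho_{-1,\ln,\ga_{D}}(\om)$ solving $\div v=f$ together with the stability estimate $\norm{\na v}_{0,\om}\leq\kappa(\om,\ga_{D})\norm{f}_{0,\om}$. Since $f\neq0$ and $v$ is a divergence preimage, $v$ is not solenoidal, so $\norm{\na v}_{0,\om}>0$ and the quotient is well defined for this particular $v$.

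Next I would estimate the inner supremum from below by evaluating the quotient at this specific $v$:
\begin{align*}
\sup_{\psi\in\ho_{-1,\ln,\ga_{D}}(\om)}
\frac{\scp{f}{\div\psi}_{0,\om}}{\norm{f}_{0,\om}\norm{\na\psi}_{0,\om}}
\geq\frac{\scp{f}{\div v}_{0,\om}}{\norm{f}_{0,\om}\norm{\na v}_{0,\om}}
=\frac{\norm{f}_{0,\om}^{2}}{\norm{f}_{0,\om}\norm{\na v}_{0,\om}}
=\frac{\norm{f}_{0,\om}}{\norm{\na v}_{0,\om}}
\geq\frac{1}{\kappa(\om,\ga_{D})},
\end{align*}
where the first equality uses $\div v=f$ and the last inequality inserts the stability estimate. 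Since the resulting lower bound $1/\kappa(\om,\ga_{D})$ is independent of $f$, taking the infimum over all $f\in\lt(\om)$ preserves it, which is precisely the claimed inequality.

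There is essentially no serious obstacle here: the entire content has been front-loaded into the 2D stability lemma, and the present corollary is a one-line reduction. The only points worth a moment's care are the trivial case $f=0$ and the verification that the optimal test field $v$ furnished by Lemma \ref{stablemextdom2D} genuinely lies in the admissible space $\ho_{-1,\ln,\ga_{D}}(\om)$ over which the supremum is taken, which it does by construction. All routine estimates involving the weighted norms have already been absorbed into the constant $\kappa(\om,\ga_{D})$, so no further computation with the logarithmic weights is needed.
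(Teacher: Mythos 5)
Your proof is correct and follows exactly the paper's (implicitly indicated) argument: the paper derives this corollary immediately from the 2D stability lemma, Lemma \ref{stablemextdom2D}, by solving $\div v=f$ and inserting the stability estimate into the quotient, just as you do. The minor care you take with $f=0$ and with membership of $v$ in $\ho_{-1,\ln,\ga_{D}}(\om)$ is fine and does not change the substance.
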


\begin{cor}[Stokes lemma for 2D exterior domains]
\label{stokeslemextdom2D}
For $\nu$, $F\in\lt_{1,\ln,\ga_{N}}(\om)$, and $v_{D}\in\s_{-1,\ln}(\om)$
the 2D Stokes system is uniquely solvable with a solenoidal vector field
$v=v_{D}+\tilde{v}\in v_{D}+\s_{-1,\ln,\ga_{D}}(\om)\subset\s_{-1,\ln}(\om)$
and $p\in\lt(\om)$. Moreover, 
\begin{align*}
\nu\norm{\na\tilde{v}}_{0,\om}
&\leq c_{fp}(\om,\ga_{D})\norm{F}_{1,\ln,\om}
+\nu\norm{\na v_{D}}_{0,\om},\\
\nu\norm{\na v}_{0,\om}
&\leq c_{fp}(\om,\ga_{D})\norm{F}_{1,\ln,\om}
+2\nu\norm{\na v_{D}}_{0,\om},\\
\norm{p}_{0,\om}
&\leq 2\kappa(\om,\ga_{D})\big(c_{fp}(\om,\ga_{D})\norm{F}_{1,\ln,\om}
+\nu\norm{\na v_{D}}_{0,\om}\big).
\end{align*}
\end{cor}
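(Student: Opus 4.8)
The plan is to mirror the proof of the Stokes lemma for $d\geq3$ (Corollary \ref{stokeslemextdom}), replacing every ingredient by its logarithmically weighted two-dimensional counterpart. The underlying structure is Brezzi's abstract saddle point theory applied to the variational formulation
\begin{align*}
\scp{\nu\na\tilde{v}}{\na\phi}_{0,\om}-\scp{p}{\div\phi}_{0,\om}
&=\scp{F}{\phi}_{0,\om}-\scp{\nu\na v_{D}}{\na\phi}_{0,\om},\\
-\scp{\div\tilde{v}}{\varphi}_{0,\om}
&=0,
\end{align*}
posed on $\ho_{-1,\ln,\ga_{D}}(\om)\times\lt(\om)$ with unknown $(\tilde{v},p)$ and test functions $(\phi,\varphi)$. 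First I would verify the three hypotheses of Brezzi's theorem: boundedness of both bilinear forms and of the right hand side functional (the latter using $\scp{F}{\phi}_{0,\om}=\bscp{\rho\ln(e+\rho)F}{(\rho\ln(e+\rho))^{-1}\phi}_{0,\om}$, so that $|\scp{F}{\phi}_{0,\om}|\leq\norm{F}_{1,\ln,\om}\norm{\phi}_{-1,\ln,\om}$); coercivity of the bilinear form $(\psi,\phi)\mapsto\scp{\nu\na\psi}{\na\phi}_{0,\om}$ on the kernel $\s_{-1,\ln,\ga_{D}}(\om)$; and the inf-sup condition for the divergence form.

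The coercivity on the kernel is precisely where the two-dimensional Friedrichs/Poincar\'e estimate enters. By Lemma \ref{ptheoextdom2D} we have $\norm{\phi}_{-1,\ln,\om}\leq c_{fp}(\om,\ga_{D})\norm{\na\phi}_{0,\om}$ for all $\phi\in\ho_{-1,\ln,\ga_{D}}(\om)$, so $\norm{\na\,\cdot\,}_{0,\om}$ is a norm on this space equivalent to the full $\ho_{-1,\ln}$-norm; combined with the positive constant $\nu>0$ this yields coercivity. The inf-sup condition is exactly Corollary \ref{infsuplemextdom2D}, itself a direct consequence of the two-dimensional stability lemma, Lemma \ref{stablemextdom2D}. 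With these three ingredients Brezzi's theorem provides existence and uniqueness of $(\tilde{v},p)$, hence of $v=v_{D}+\tilde{v}$; the second equation forces $\div\tilde{v}=0$, so $\tilde{v}\in\s_{-1,\ln,\ga_{D}}(\om)$ and $v\in v_{D}+\s_{-1,\ln,\ga_{D}}(\om)\subset\s_{-1,\ln}(\om)$.

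For the three estimates I would test the equations directly. Testing the first equation with $\phi=\tilde{v}$ annihilates the pressure term (since $\div\tilde{v}=0$), and after Cauchy-Schwarz together with the Friedrichs/Poincar\'e estimate applied to $\scp{F}{\tilde{v}}_{0,\om}$ and division by $\norm{\na\tilde{v}}_{0,\om}$ one obtains the first bound $\nu\norm{\na\tilde{v}}_{0,\om}\leq c_{fp}(\om,\ga_{D})\norm{F}_{1,\ln,\om}+\nu\norm{\na v_{D}}_{0,\om}$; the triangle inequality $\norm{\na v}_{0,\om}\leq\norm{\na\tilde{v}}_{0,\om}+\norm{\na v_{D}}_{0,\om}$ then gives the second. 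For the pressure I would solve $\div\phi_{p}=p$ via Lemma \ref{stablemextdom2D}, obtaining $\phi_{p}\in\ho_{-1,\ln,\ga_{D}}(\om)$ with $\norm{\na\phi_{p}}_{0,\om}\leq\kappa(\om,\ga_{D})\norm{p}_{0,\om}$, and then test the first equation with $\phi=\phi_{p}$ to get $\norm{p}_{0,\om}^{2}=\nu\scp{\na v}{\na\phi_{p}}_{0,\om}-\scp{F}{\phi_{p}}_{0,\om}\leq\big(\nu\norm{\na v}_{0,\om}+c_{fp}(\om,\ga_{D})\norm{F}_{1,\ln,\om}\big)\kappa(\om,\ga_{D})\norm{p}_{0,\om}$; inserting the second estimate for $\nu\norm{\na v}_{0,\om}$ produces the stated pressure bound.

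The main obstacle is not located in this proof at all; it is entirely encapsulated in Lemma \ref{ptheoextdom2D} and Lemma \ref{stablemextdom2D}, which must compensate for the failure of the unweighted Hardy inequality in the plane (the constant $c_{d}=2/(d-2)$ blowing up as $d\to2$). The logarithmic weight $\rho\ln(e+\rho)$ restores the borderline integrability that the polynomial weight $\rho$ alone loses in dimension two, and it is this modification that forces the reappearance of the boundary or mean value conditions absent in the case $d\geq3$. Granting those two lemmas, the Stokes lemma itself follows by the same routine saddle point argument as before.
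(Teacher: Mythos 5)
Your proposal is correct and follows exactly the route the paper intends: the paper proves the 2D case by "obvious modifications" of Corollary \ref{stokeslemextdom}, whose proof is precisely standard saddle point (Brezzi) theory with the inf-sup lemma (here Corollary \ref{infsuplemextdom2D}), positivity of the principal part via the Friedrichs/Poincar\'e estimate (here Lemma \ref{ptheoextdom2D}), and solving $\div\phi_{p}=p$ by the stability lemma (here Lemma \ref{stablemextdom2D}) for the pressure bound. Your explicit testing arguments reproduce the paper's stated constants exactly, so there is nothing to add.
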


Here we have introduced
\begin{align*}
\lt_{1,\ln,\ga_{N}}(\om)
&:=\begin{cases}\lt_{1,\ln}(\om)&\text{, if }\ga_{D}\neq\emptyset,\\
\lt_{1,\ln,\bot}(\om)&\text{, if }\ga_{D}=\emptyset,\end{cases}\\
\lt_{1,\ln,\bot}(\om)
&:=\lt_{1,\ln}(\om)\cap(\reals^{2})^{\bot_{0,\om}}
=\setb{\phi\in\lt_{1,\ln}(\om)}{\int_{\om}\phi_{i}=0}.
\end{align*}

\subsubsection{A Posteriori Error Estimates for Stationary Stokes Equations in 2D}

We introduce 
$$\tilde{\d}(\om)
:=\setb{\Theta\in\lt(\om)}{\Div\Theta\in\lt_{1,\ln}(\om)}$$
and $\tilde{\d}_{\ga_{N}}(\om)$ as closure of 
$\ci_{\ga_{N}}(\om)$-tensor fields in the norm of $\tilde{\d}(\om)$.

For the approximation of the velocity field we have the following result:

\begin{theo}[a posteriori error estimate I for 2D exterior domains]
\label{aposttheo2extdom2D}
Let the approximation $\tilde{v}$ belong to $v_{D}+\ho_{-1,\ln,\ga_{D}}(\om)$.
Then for all $\tau\in\tilde{\d}_{\ga_{N}}(\om)$ and all $q\in\lt(\om)$
\begin{align*}
\norm{\nu^{1/2}\na(v-\tilde{v})}_{0,\om}
&\leq\nu_{-}^{-1/2}c_{fp}(\om,\ga_{D})\norm{\Div\tau+F}_{1,\ln,\om}
+\bnorm{\nu^{-1/2}(\tau+q\,\I-\nu\na\tilde{v})}_{0,\om}\\
&\qquad+2\nu_{+}^{1/2}\kappa(\om,\ga_{D})\norm{\div\tilde{v}}_{0,\om}.
\end{align*}
\end{theo}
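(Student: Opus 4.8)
The plan is to mirror the $d\geq3$ arguments leading to Theorem \ref{aposttheo2extdom}, replacing the polynomially weighted spaces and norms by their logarithmically weighted $2$D counterparts and invoking the $2$D results in place of the higher-dimensional ones. First I would treat the solenoidal case, i.e.\ temporarily assume $\tilde{v}\in v_{D}+\s_{-1,\ln,\ga_{D}}(\om)$, and reproduce the computation \eqref{apostestcompsol}. Testing the variational identity \eqref{spform2extstokes} against a solenoidal $\phi\in\s_{-1,\ln,\ga_{D}}(\om)$, inserting an arbitrary $\tau\in\tilde{\d}_{\ga_{N}}(\om)$ through the integration-by-parts formula \eqref{partinttildeD} and an arbitrary $q\in\lt(\om)$ through $\scp{q\,\I}{\na\phi}_{0,\om}=\scp{q}{\div\phi}_{0,\om}=0$, one arrives at the same chain of estimates, the only differences being that the weighted Cauchy--Schwarz step now reads $\scp{\Div\tau+F}{\phi}_{0,\om}\leq\norm{\Div\tau+F}_{1,\ln,\om}\norm{\phi}_{-1,\ln,\om}$ and that the $2$D Friedrichs/Poincar\'e estimate of Lemma \ref{ptheoextdom2D} furnishes $\norm{\phi}_{-1,\ln,\om}\leq c_{fp}(\om,\ga_{D})\norm{\na\phi}_{0,\om}$. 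Choosing $\phi=v-\tilde{v}$ then gives the asserted bound with the last summand absent.

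To drop the solenoidality assumption I would apply the $2$D stability lemma, Lemma \ref{stablemextdom2D}, to obtain a correction $w\in\ho_{-1,\ln,\ga_{D}}(\om)$ with $\div w=-\div\tilde{v}$ and $\norm{\na w}_{0,\om}\leq\kappa(\om,\ga_{D})\norm{\div\tilde{v}}_{0,\om}$. Then $\tilde{v}_{0}:=\tilde{v}+w\in v_{D}+\s_{-1,\ln,\ga_{D}}(\om)$ is solenoidal and carries the same Dirichlet datum as $\tilde{v}$, so the solenoidal bound just derived applies to $\tilde{v}_{0}$. Exactly as in \eqref{apostestcompnonsol}, a first triangle inequality passes from $\na(v-\tilde{v})$ to $\na(v-\tilde{v}_{0})$ plus $\na w$, and a second absorbs the difference $\na\tilde{v}_{0}-\na\tilde{v}=\na w$ inside the stress residual; using $\norm{\nu^{1/2}\na w}_{0,\om}\leq\nu_{+}^{1/2}\norm{\na w}_{0,\om}$ this produces precisely the penalty term $2\nu_{+}^{1/2}\kappa(\om,\ga_{D})\norm{\div\tilde{v}}_{0,\om}$.

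Since each step is a transcription of the $d\geq3$ case, the only point demanding real care is the well-definedness of the weighted duality pairings in the logarithmic regime. The crucial check is that $\scp{\Div\tau+F}{\phi}_{0,\om}=\bscp{\big(\rho\ln(e+\rho)\big)(\Div\tau+F)}{\big(\rho\ln(e+\rho)\big)^{-1}\phi}_{0,\om}$ is finite, which holds because $\Div\tau+F\in\lt_{1,\ln}(\om)$ (from $\tau\in\tilde{\d}_{\ga_{N}}(\om)$ together with $F\in\lt_{1,\ln}(\om)$) while $\phi\in\lt_{-1,\ln}(\om)$, and that \eqref{partinttildeD} continues to hold in the $\ln$-weighted spaces by the same density argument used to define $\tilde{\d}_{\ga_{N}}(\om)$. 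I expect no genuinely new analytic difficulty beyond this bookkeeping of the weight $\rho\ln(e+\rho)$ in place of $\rho$.
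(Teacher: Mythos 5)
Your proposal is correct and follows exactly the route the paper intends: the paper states that all 2D results follow from the $d\geq3$ arguments ``with the obvious modifications,'' and your transcription of \eqref{apostestcompsol}--\eqref{apostestcompnonsol} with Lemma \ref{ptheoextdom2D} replacing Lemma \ref{fplemextdom} and Lemma \ref{stablemextdom2D} replacing Lemma \ref{stablemextdom} is precisely that. Your additional check of the well-definedness of the $\ln$-weighted duality pairings is a sensible piece of bookkeeping that the paper leaves implicit.
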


If additionally $\div\tilde{v}=0$ in $\reals^{2}\setminus\overline{B}_{r_{2}}$ and, 
if $\ga_{D}=\ga$, \eqref{pwdecay} holds, then $\kappa(\om,\ga_{D})$
can be replaced by $\kappa(\omega,\gamma_{D})$ in Theorem \ref{aposttheo2extdom2D}.
If the approximation $\tilde{v}$ is solenoidal, i.e., $\div\tilde{v}=0$ in $\om$,
the upper bound coincides with the norm of the error on the left hand side
if $(\tau,q)=(\sigma,p)$.

For the approximation of the pressure function we get:

\begin{theo}[a posteriori error estimate II for 2D exterior domains]
\label{aposttheo4extdom2D}
Let $\tilde{p}\in\lt(\om)$.
Then for all $\tau\in\tilde{\d}_{\ga_{N}}(\om)$
and all $\psi\in v_{D}+\ho_{-1,\ln,\ga_{D}}(\om)$ it holds
\begin{align*}
\norm{p-\tilde{p}}_{0,\om}
&\leq\kappa(\om,\ga_{D})
\Big((\nu_{-}^{-1/2}\nu_{+}^{1/2}+1)c_{fp}(\om,\ga_{D})\norm{\Div\tau+F}_{1,\ln,\om}\\
&\qquad+2\nu_{+}^{1/2}\bnorm{\nu^{-1/2}(\tau+\tilde{p}\,\I-\nu\na\psi)}_{0,\om}
+2\nu_{+}\kappa(\om,\ga_{D})\norm{\div\psi}_{0,\om}\Big).
\end{align*}
\end{theo}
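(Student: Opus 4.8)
The plan is to transcribe the derivation of the three-dimensional pressure estimate, Theorem \ref{aposttheo4extdom}, into the two-dimensional logarithmically weighted setting, replacing each ingredient by its 2D counterpart: the stability lemma \ref{stablemextdom} by Lemma \ref{stablemextdom2D}, the Friedrichs/Poincar\'e estimate of Lemma \ref{fplemextdom} by Lemma \ref{ptheoextdom2D}, the polynomial weight $\rho$ by $\rho\ln(e+\rho)$, and the velocity estimate of Theorem \ref{aposttheo2extdom} by Theorem \ref{aposttheo2extdom2D}.

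First I would apply the 2D stability lemma, Lemma \ref{stablemextdom2D}, to the right-hand side $p-\tilde{p}\in\lt(\om)$ to obtain a vector field $\phi_{p}\in\ho_{-1,\ln,\ga_{D}}(\om)$ with $\div\phi_{p}=p-\tilde{p}$ and $\norm{\na\phi_{p}}_{0,\om}\leq\kappa(\om,\ga_{D})\norm{p-\tilde{p}}_{0,\om}$. Then, starting from $\norm{p-\tilde{p}}_{0,\om}^{2}=\scp{p-\tilde{p}}{\div\phi_{p}}_{0,\om}$ and using $\div\phi_{p}=\I:\na\phi_{p}$, the 2D analogue of the weak formulation \eqref{spform2extstokes} tested with $\phi_{p}$, and the integration-by-parts identity \eqref{partinttildeD} for $\tau\in\tilde{\d}_{\ga_{N}}(\om)$, I would regroup the right-hand side exactly as in the three-dimensional computation into the gradient term $\scp{\nu\na(v-\psi)}{\na\phi_{p}}_{0,\om}$, the residual term $-\scp{\Div\tau+F}{\phi_{p}}_{0,\om}$, and the equilibration term $\scp{\nu\na\psi-\tilde{p}\,\I-\tau}{\na\phi_{p}}_{0,\om}$.

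The residual term is the one place where the logarithmic weight plays a role. I would write the pairing as $\scp{\Div\tau+F}{\phi_{p}}_{0,\om}=\bscp{\rho\ln(e+\rho)(\Div\tau+F)}{(\rho\ln(e+\rho))^{-1}\phi_{p}}_{0,\om}$, so that the weighted Cauchy--Schwarz inequality yields $\scp{\Div\tau+F}{\phi_{p}}_{0,\om}\leq\norm{\Div\tau+F}_{1,\ln,\om}\norm{\phi_{p}}_{-1,\ln,\om}$, and then bound $\norm{\phi_{p}}_{-1,\ln,\om}\leq c_{fp}(\om,\ga_{D})\norm{\na\phi_{p}}_{0,\om}$ by the 2D Friedrichs/Poincar\'e estimate of Lemma \ref{ptheoextdom2D}. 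The gradient and equilibration terms are controlled by plain Cauchy--Schwarz together with the spectral bounds $\nu_{-}\leq\nu\leq\nu_{+}$. Dividing through by $\norm{\na\phi_{p}}_{0,\om}$ and inserting the stability bound from the first step removes $\phi_{p}$ and produces
$$\norm{p-\tilde{p}}_{0,\om}\leq\kappa(\om,\ga_{D})\Big(\nu_{+}^{1/2}\norm{\nu^{1/2}\na(v-\psi)}_{0,\om}+c_{fp}(\om,\ga_{D})\norm{\Div\tau+F}_{1,\ln,\om}+\nu_{+}^{1/2}\bnorm{\nu^{-1/2}(\tau+\tilde{p}\,\I-\nu\na\psi)}_{0,\om}\Big),$$
the verbatim 2D analogue of the intermediate inequality preceding Theorem \ref{aposttheo4extdom}.

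Finally I would absorb the velocity term $\norm{\nu^{1/2}\na(v-\psi)}_{0,\om}$ by invoking the already established 2D velocity estimate, Theorem \ref{aposttheo2extdom2D}, with $\tilde{v}:=\psi$ and $q:=\tilde{p}$, and then collect the coefficients to reach the stated bound. Since every step reproduces the $d\geq3$ argument line for line, there is no genuine new difficulty; the only point demanding care, and hence the main (if modest) obstacle, is the consistent bookkeeping of the logarithmic weight, namely ensuring that the duality pairing between $\lt_{1,\ln}(\om)$ and $\lt_{-1,\ln}(\om)$ and the log-weighted Friedrichs/Poincar\'e estimate are applied to the correct spaces. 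Once that matching is verified, the result follows by the same algebra as in three dimensions.
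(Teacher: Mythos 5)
Your proposal is correct and takes essentially the same route as the paper: the paper itself disposes of the 2D statements by remarking that ``all results from the sections for $d\geq3$ follow with the obvious modifications,'' and your transcription --- Lemma \ref{stablemextdom2D} for $\phi_{p}$ with $\div\phi_{p}=p-\tilde{p}$, the same three-term regrouping via \eqref{partinttildeD} and $\div\phi_{p}=\I:\na\phi_{p}$, the log-weighted duality pairing bounded by Lemma \ref{ptheoextdom2D}, and finally Theorem \ref{aposttheo2extdom2D} with $\tilde{v}:=\psi$, $q:=\tilde{p}$ --- is exactly that modification, with the coefficients $(\nu_{-}^{-1/2}\nu_{+}^{1/2}+1)$, $2\nu_{+}^{1/2}$, and $2\nu_{+}\kappa(\om,\ga_{D})$ collecting as stated.
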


For non-conforming approximations of the velocity field we see:

\begin{theo}[a posteriori error estimate III for 2D exterior domains]
\label{aposttheo5extdom2D}
Let $\tilde{T}\in\lt(\om)$ and $\tilde{p}\in\lt(\om)$.
Then for all $\psi\in v_{D}+\ho_{-1,\ln,\ga_{D}}(\om)$,
all $\tau\in\tilde{\d}_{\ga_{N}}(\om)$, and all $q\in\lt(\om)$ it holds
\begin{align*}
\norm{\nu^{1/2}(T-\tilde{T})}_{0,\om}
&\leq\nu_{-}^{-1/2}c_{fp}(\om,\ga_{D})\norm{\Div\tau+F}_{1,\ln,\om}
+\bnorm{\nu^{-1/2}(\tau+q\,\I-\nu\na\psi)}_{0,\om}\\
&\qquad+2\nu_{+}^{1/2}\kappa(\om,\ga_{D})\norm{\div\psi}_{0,\om}
+\norm{\nu^{1/2}(\na\psi-\tilde{T})}_{0,\om}\\
&\leq\nu_{-}^{-1/2}c_{fp}(\om,\ga_{D})\norm{\Div\tau+F}_{1,\ln,\om}
+\bnorm{\nu^{-1/2}(\tau+q\,\I-\nu\tilde{T})}_{0,\om}\\
&\qquad+2\nu_{+}^{1/2}\kappa(\om,\ga_{D})\norm{\div\psi}_{0,\om}
+2\norm{\nu^{1/2}(\na\psi-\tilde{T})}_{0,\om},\\
\norm{p-\tilde{p}}_{0,\om}
&\leq\kappa(\om,\ga_{D})
\Big((\nu_{-}^{-1/2}\nu_{+}^{1/2}+1)c_{fp}(\om,\ga_{D})\norm{\Div\tau+F}_{1,\ln,\om}\\
&\qquad+2\nu_{+}^{1/2}\bnorm{\nu^{-1/2}(\tau+\tilde{p}\,\I-\nu\tilde{T})}_{0,\om}
+2\nu_{+}\kappa(\om,\ga_{D})\norm{\div\psi}_{0,\om}\\
&\qquad\qquad+2\nu_{+}^{1/2}\norm{\nu^{1/2}(\na\psi-\tilde{T})}_{0,\om}\Big).
\end{align*}
\end{theo}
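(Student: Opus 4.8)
The plan is to reduce the assertion entirely to the already established 2D velocity and pressure a posteriori estimates, Theorem \ref{aposttheo2extdom2D} and Theorem \ref{aposttheo4extdom2D}, in exact parallel to the proof of the $d\geq3$ analogue, Theorem \ref{aposttheo5extdom}. The only structural ingredients are the triangle inequality together with the elementary algebraic identity that lets one swap the exact gradient $\na\psi$ for the non-conforming tensor $\tilde{T}$ at the price of the additive conformity penalty $\norm{\nu^{1/2}(\na\psi-\tilde{T})}_{0,\om}$. Since the 2D stability lemma, Lemma \ref{stablemextdom2D}, and the 2D Friedrichs/Poincar\'e estimate, Lemma \ref{ptheoextdom2D}, already supply every quantitative constant, nothing new must be proved; the argument is a reorganisation of known inequalities.

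For the first velocity inequality I would start from $T=\na v$ and split
$$\norm{\nu^{1/2}(\na v-\tilde{T})}_{0,\om}\leq\norm{\nu^{1/2}\na(v-\psi)}_{0,\om}+\norm{\nu^{1/2}(\na\psi-\tilde{T})}_{0,\om},$$
and then bound the first summand by Theorem \ref{aposttheo2extdom2D} applied with $\tilde{v}=\psi$ (admissible since $\psi\in v_{D}+\ho_{-1,\ln,\ga_{D}}(\om)$). This directly yields the three Stokes-residual terms plus the trailing conformity term with coefficient one. To pass to the second, fully computable, velocity inequality I would remove the hidden dependence on $\na\psi$ inside the majorant by writing $\tau+q\,\I-\nu\na\psi=(\tau+q\,\I-\nu\tilde{T})+\nu(\tilde{T}-\na\psi)$, whence
$$\bnorm{\nu^{-1/2}(\tau+q\,\I-\nu\na\psi)}_{0,\om}\leq\bnorm{\nu^{-1/2}(\tau+q\,\I-\nu\tilde{T})}_{0,\om}+\norm{\nu^{1/2}(\na\psi-\tilde{T})}_{0,\om};$$
adding this to the trailing term simply doubles the conformity penalty.

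For the pressure inequality the same mechanism applies to Theorem \ref{aposttheo4extdom2D}: writing $\tau+\tilde{p}\,\I-\nu\na\psi=(\tau+\tilde{p}\,\I-\nu\tilde{T})+\nu(\tilde{T}-\na\psi)$ and using the triangle inequality, the term $\bnorm{\nu^{-1/2}(\tau+\tilde{p}\,\I-\nu\na\psi)}_{0,\om}$ is replaced by $\bnorm{\nu^{-1/2}(\tau+\tilde{p}\,\I-\nu\tilde{T})}_{0,\om}$ plus $\norm{\nu^{1/2}(\na\psi-\tilde{T})}_{0,\om}$. Since this term enters the bound of Theorem \ref{aposttheo4extdom2D} with prefactor $2\nu_{+}^{1/2}$ inside the bracket that is multiplied overall by $\kappa(\om,\ga_{D})$, the replacement introduces precisely the additional bracket summand $2\nu_{+}^{1/2}\norm{\nu^{1/2}(\na\psi-\tilde{T})}_{0,\om}$ claimed in the assertion, leaving the remaining three summands untouched.

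I do not expect any genuine obstacle here, as the paper's remark that the $d\geq3$ results transfer ``with the obvious modifications'' applies verbatim. The only point requiring mild care is bookkeeping with the weights: all residual norms carry the logarithmic weight $\norm{\,\cdot\,}_{1,\ln,\om}$ rather than $\norm{\,\cdot\,}_{1,\om}$, and one must confirm that the duality $\scp{\Div\tau}{\phi}_{0,\om}=\scp{(\rho\ln(e+\rho))\Div\tau}{(\rho\ln(e+\rho))^{-1}\phi}_{0,\om}$ used implicitly in Theorem \ref{aposttheo2extdom2D} is well defined for $\tau\in\tilde{\d}_{\ga_{N}}(\om)$ and $\phi\in\ho_{-1,\ln,\ga_{D}}(\om)$, which is exactly the 2D analogue of \eqref{partinttildeD}.
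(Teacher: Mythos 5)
Your proposal is correct and follows essentially the same route as the paper: the paper proves the $d\geq3$ analogue (Theorem \ref{aposttheo5extdom}) by the triangle inequality $\norm{\nu^{1/2}(T-\tilde{T})}_{0,\om}\leq\norm{\nu^{1/2}\na(v-\psi)}_{0,\om}+\norm{\nu^{1/2}(\na\psi-\tilde{T})}_{0,\om}$, then Theorem \ref{aposttheo2extdom} with $\tilde{v}=\psi$, then Theorem \ref{aposttheo4extdom} with a further triangle inequality swapping $\na\psi$ for $\tilde{T}$, and states that the 2D case follows with the obvious modifications (logarithmic weights), which is precisely what you carry out. Your closing remark on the well-definedness of the weighted duality pairing, the 2D analogue of \eqref{partinttildeD}, is the correct bookkeeping point and is consistent with the paper's definition of $\tilde{\d}_{\ga_{N}}(\om)$ in the 2D subsection.
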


For $\tilde{T}=\na\tilde{v}$, $\psi=\tilde{v}\in v_{D}+\ho_{-1,\ln,\ga_{D}}(\om)$
we get back Theorem \ref{aposttheo2extdom2D} and Theorem \ref{aposttheo4extdom2D}.
Moreover, using the Helmholtz decomposition 
$$T-\tilde{T}=\na w+\tilde{T}_{0}\in\na\ho_{-1,\ln,\ga_{D}}(\om)\oplus_{0,\nu}\nu^{-1}{}_{0}\d_{\ga_{N}}(\om),$$
we observe
$$\norm{\nu^{1/2}\tilde{T}_{0}}_{0,\om}
=\min_{\psi\in v_{D}+\ho_{-1,\ln,\ga_{D}}(\om)}\bnorm{\nu^{1/2}(\na\psi-\tilde{T})}_{0,\om}.$$

As before, error estimates for the stress tensor field $\sigma$ 
follow immediately by the triangle inequality. For a lower bound we have the following result:

\begin{theo}[a posteriori error estimate IV for 2D exterior domains]
\label{aposttheo6extdom2D}
Let the approximation $\tilde{v}$ belong to $v_{D}+\ho_{-1,\ln,\ga_{D}}(\om)$.
Then for all $\phi\in\ho_{-1,\ln,\ga_{D}}(\om)$, all $\tau\in\tilde{\d}_{\ga_{N}}(\om)$,
all $\psi\in v_{D}+\ho_{-1,\ln,\ga_{D}}(\om)$,
and all $q\in\lt(\om)$ it holds
\begin{align*}
&\qquad\norm{\nu^{1/2}\na(v-\tilde{v})}_{0,\om}^2\\
&\geq2\scp{F}{\phi}_{0,\om}
+2\scp{q}{\div\phi}_{0,\om}
-\bscp{\nu\na(2\tilde{v}+\phi)}{\na\phi}_{0,\om}\\
&\qquad-2\kappa(\om,\ga_{D})\norm{\div\phi}_{0,\om}
\Big((\nu_{-}^{-1/2}\nu_{+}^{1/2}+1)c_{fp}(\om,\ga_{D})\norm{\Div\tau+F}_{1,\ln,\om}\\
&\qquad\qquad+2\nu_{+}^{1/2}\bnorm{\nu^{-1/2}(\tau+q\,\I-\nu\na\psi)}_{0,\om}
+2\nu_{+}\kappa(\om,\ga_{D})\norm{\div\psi}_{0,\om}\Big).
\end{align*}
In particular, $\psi=\tilde{v}$ is possible.
\end{theo}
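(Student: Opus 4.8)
The plan is to transcribe verbatim the derivation preceding Theorem \ref{aposttheo6extdom} (the case $d\geq3$), replacing the polynomially weighted space $\ho_{-1,\ga_{D}}(\om)$ by the logarithmically weighted space $\ho_{-1,\ln,\ga_{D}}(\om)$ and the dual norm $\norm{\,\cdot\,}_{1,\om}$ by $\norm{\,\cdot\,}_{1,\ln,\om}$ throughout. First I would invoke the elementary Hilbert-space identity $|x|^2=\max_{y}\big(2\scp{x}{y}-|y|^2\big)$ in the space $\na\ho_{-1,\ln,\ga_{D}}(\om)$ equipped with the weighted inner product $\scp{\nu\,\cdot\,}{\,\cdot\,}_{0,\om}$ and with $x=\na(v-\tilde{v})$. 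Dropping the maximum to any fixed test field $\na\phi$ with $\phi\in\ho_{-1,\ln,\ga_{D}}(\om)$ yields
$$\norm{\nu^{1/2}\na(v-\tilde{v})}_{0,\om}^2\geq2\scp{\nu\na(v-\tilde{v})}{\na\phi}_{0,\om}-\norm{\nu^{1/2}\na\phi}_{0,\om}^2.$$

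Next I would insert the weak formulation (the 2D analogue of \eqref{spform2extstokes}), namely $\scp{\nu\na v}{\na\phi}_{0,\om}=\scp{F}{\phi}_{0,\om}+\scp{p}{\div\phi}_{0,\om}$, where the duality pairing $\scp{F}{\phi}_{0,\om}=\bscp{\rho\ln(e+\rho)F}{(\rho\ln(e+\rho))^{-1}\phi}_{0,\om}$ is well defined for $F\in\lt_{1,\ln,\ga_{N}}(\om)$. After rewriting $-2\scp{\nu\na\tilde{v}}{\na\phi}_{0,\om}-\norm{\nu^{1/2}\na\phi}_{0,\om}^2=-\bscp{\nu\na(2\tilde{v}+\phi)}{\na\phi}_{0,\om}$ and inserting the pair $\pm2\scp{q}{\div\phi}_{0,\om}$, I obtain exactly the computable terms of the asserted inequality together with the single remainder $2\scp{p-q}{\div\phi}_{0,\om}$, which is the only quantity whose sign is not directly controlled.

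Finally I would bound this remainder from below by Cauchy-Schwarz, $2\scp{p-q}{\div\phi}_{0,\om}\geq-2\norm{p-q}_{0,\om}\norm{\div\phi}_{0,\om}$, and then substitute the a posteriori pressure estimate of Theorem \ref{aposttheo4extdom2D} with $\tilde{p}=q$ in place of $\norm{p-q}_{0,\om}$. This produces the factor $2\kappa(\om,\ga_{D})\norm{\div\phi}_{0,\om}$ multiplying the three terms in the large parenthesis, completing the claimed inequality. The final choice $\psi=\tilde{v}$ is admissible because $\tilde{v}\in v_{D}+\ho_{-1,\ln,\ga_{D}}(\om)$ by hypothesis.

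The main obstacle is not the algebra, which is routine, but the bookkeeping needed to keep every duality pairing well defined in the logarithmically weighted framework: one must check that $\scp{F}{\phi}_{0,\om}$, $\scp{p}{\div\phi}_{0,\om}$, and $\scp{q}{\div\phi}_{0,\om}$ all make sense for $\phi\in\ho_{-1,\ln,\ga_{D}}(\om)$, $F\in\lt_{1,\ln,\ga_{N}}(\om)$, and $p,q\in\lt(\om)$, which rests on the 2D Friedrichs/Poincar\'e estimate of Lemma \ref{ptheoextdom2D} and the definition of $\tilde{\d}_{\ga_{N}}(\om)$. Once this framework is verified, the argument is a verbatim copy of the $d\geq3$ derivation.
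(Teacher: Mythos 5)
Your proposal is correct and coincides with the paper's own argument: the paper proves the $d\geq3$ version (Theorem \ref{aposttheo6extdom}) via exactly the Hilbert-space $\max$-identity, the weak formulation \eqref{spform2extstokes}, insertion of $\pm2\scp{q}{\div\phi}_{0,\om}$, and a Cauchy--Schwarz bound on $2\scp{p-q}{\div\phi}_{0,\om}$ combined with the pressure estimate (Theorem \ref{aposttheo4extdom} with $\tilde{p}=q$), and then states that all 2D results follow with the obvious modifications in the logarithmically weighted spaces. Your transcription, including the verification that the duality pairings remain well defined in the $\ln$-weighted framework and the use of Theorem \ref{aposttheo4extdom2D}, is precisely that intended adaptation.
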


Again, for solenoidal $\phi$, i.e., $\phi\in\s_{-1,\ln,\ga_{D}}(\om)$ we simply get
$$\norm{\nu^{1/2}\na(v-\tilde{v})}_{0,\om}^2
\geq2\scp{F}{\phi}_{0,\om}
-\bscp{\nu\na(2\tilde{v}+\phi)}{\na\phi}_{0,\om}$$
and equality holds for $\phi=v-\tilde{v}$, provided that
the approximation $\tilde{v}$ is also solenoidal, i.e.,
$\tilde{v}\in v_{D}+\s_{-1,\ln,\ga_{D}}(\om)$.
Finally, to handle also very non-conforming approximations $\tilde{T}\in\lt(\om)$
we can simply utilize for all $\varphi\in v_{D}+\ho_{-1,\ln,\ga_{D}}(\om)$
the triangle inequality
$$\norm{\nu^{1/2}(\na v-\tilde{T})}_{0,\om}
\geq\norm{\nu^{1/2}\na(v-\varphi)}_{0,\om}
-\norm{\nu^{1/2}(\na\varphi-\tilde{T})}_{0,\om}$$
in combination with Theorem \ref{aposttheo6extdom2D} ($\tilde{v}=\varphi$).

\bibliographystyle{plain}

\end{document}